\documentclass[a4paper,10pt,twoside]{article}
\usepackage[german,french,english]{babel}
\usepackage[T1]{fontenc}
\usepackage[utf8]{inputenc}
\usepackage[numbers,sort&compress]{natbib} 
\usepackage{amssymb}
\usepackage{enumerate}
\usepackage{amsmath, amsthm}
\usepackage{graphicx}
\usepackage{nicematrix}
\usepackage{lmodern}
\usepackage[french]{babel}
\usepackage{enumitem}
\usepackage{hyperref}
\usepackage[a4paper,margin=2.5cm]{geometry}

\usepackage[most]{tcolorbox}
\usepackage{fancyhdr}
\usepackage{float}
\usepackage{subcaption}

\usepackage{bbm}
\usepackage{dsfont}

\newcommand{\dd}{\,\mathrm{d}}
\newcommand{\R}{\mathbb{R}}
\newcommand{\N}{\mathbb{N}}
\newcommand{\E}{\mathbb{E}}
\renewcommand{\P}{\mathbb{P}}

\newtheorem{Theorem}{Theorem}[section]
\newtheorem{Definition}[Theorem]{Definition}
\newtheorem{Proposition}[Theorem]{Proposition}

\newtheorem{Lemma}[Theorem]{Lemma}

\newtheorem{Remark}[Theorem]{Remark}

\newtheorem{assumption}{Assumption}[section]
\newtheorem*{note}{Note}

\newcommand{\p}{\mathbb{P}}
\newcommand{\F}{\mathbb{F}}

\newcommand{\cF}{{\mathcal F}}
\newcommand{\cA}{{\mathcal A}}

\newcommand{\id}{\mathds{1}}
\newcommand{\as}{\mbox{{\rm a.s.}}}

\usepackage{authblk}
	
	\newcommand{\T}{\top}
	\renewcommand{\c}{\alpha}
	\newcommand{\Mid}{{\ \Big|\ }}

	\DeclareMathOperator{\diag}{diag}
    
\title{Optimal Merton's Problem under Multivariate Affine Volterra Models with Jumps\footnote{ The work of the first author is partially supported by
  Agence Nationale de la Recherche (ReLISCoP grant ANR-21-CE40-0001).  The second author gratefully acknowledge support from \textit{Ecole Doctorale Sciences Mathematiques de Paris Centre}.}}
\author[1,2]{Sigui Brice Dro\footnote{E-mail: {\tt dro@lpsm.paris} }}
\author[1]{Emmanuel Gnabeyeu\footnote{E-mail: {\tt emmanuel.gnabeyeu\_mbiada@sorbonne-universite.fr} }}

\affil[1]{Laboratoire de Probabilités, Statistique et Modélisation, UMR 8001, Sorbonne Université and Universit\'e Paris Cit\'e, 4 pl. Jussieu, F-75252 Paris Cedex 5, France.}
\affil[2]{BPCE SA, 7 promenade Germaine Sablon Paris 75013, France.}

\begin{document}

\maketitle

\begin{abstract}
This paper is concerned with portfolio selection for an investor with exponential, power, and logarithmic utility in multi-asset financial markets allowing jumps. We investigate the classical Merton's portfolio optimization problem in a Volterra stochastic environment described by a multivariate Volterra--Heston model with jumps driven by an independent Poisson random measure. 
	Owing to the non-Markovian and non-semimartingale nature of the model, classical stochastic control techniques are not directly applicable. Instead, the problem is tackled using the martingale optimality principle by constructing a family of supermartingale processes characterized via solutions to an original Riccati backward stochastic differential equation with jumps (Riccati BSDEJ).
	The resulting optimal strategies for Merton's problems, as well as the corresponding indifference prices, are derived in semi-closed form depending on the solutions to time-dependent multivariate Riccati-Volterra integral equations with L\'evy exponential jump compensator, while the optimal value is expressed using the solution to this original Riccati BSDEJ.
    Numerical experiments on a two-dimensional rough Heston model illustrate the impact of both path roughness and jumps components on the value function and optimal strategies  in the Merton problem.
     
\end{abstract}
\noindent \textbf{\noindent {Keywords:}} Affine Volterra Processes with Jumps, Martingale Optimality Principle, Backward Stochastic Differential Equations with Jumps (BSDEJ), Fractional Differential Equations, Riccati Equations, Optimal Control of Volterra Integral Jump Diffusions, L\'evy processes.

\medskip
\noindent\textbf{Mathematics Subject Classification (2020):} \textit{ 34A08, 34A34, 45D05, 60G10, 60G22, 60H10, 91B70, 91G80,93E20}

\tableofcontents


\section{Introduction}
The maximization of a portfolio's 
expected utility of terminal wealth with respect to a given utility function has served as a cornerstone of mathematical finance and has attracted considerable attention from both practitioners and academicians. It dates back to Merton~\cite{Merton1969,Merton1971}, who studied the joint problem of investment and consumption choice and provided the classical financial economic framework for understanding how market volatility affects investment demands.
Such a problem becomes particularly relevant in the presence of trading constraints, as these generally destroy market completeness. In this context, the valuation of contingent claims is no longer unique, and utility-based indifference pricing provides a natural alternative.

\medskip
\noindent
Such utility maximization problem has been extensively studied in various continuous-time Brownian-motion-based models under general constraints. Several approaches have been developed, including convex duality methods, stochastic control techniques based on Hamilton--Jacobi--Bellman (HJB) equations, and the martingale optimality principle via backward stochastic differential equations (BSDEs) (see, e.g., ~\cite{CvitanicKaratzas1992,HuImkellerMueller2005}).


\medskip
\noindent
The literature has also addressed mixed Brownian--Poisson frameworks. In particular, one may distinguish between settings where the contingent claim depends on both a Brownian motion and an independent jump Poisson process, while the underlying asset dynamics remain continuous~\cite{Becherer2006}, and settings where the asset dynamics themselves are driven by both a Brownian motion and a Poisson jump random measure~\cite{Morlais2008,Morlais2009,Bank2022Merton, turki2026portfolio}.

\medskip
\noindent Over the past
decade, the modeling of asset price dynamics has undergone a paradigm shift with the empirical observation that both implied and realized volatilities of major financial indices exhibit significantly rougher sample paths~\citep{GatheralJR2018} than those generated by classical Brownian-motion-based models.
This observation has sparked a rapidly expanding body of research on rough volatility models~\cite{el2019characteristic, ElEuchFukasawaRosenbaum2018}. 
 Recent remarkable advances include the introduction of the so-called \textit{fake stationary Volterra Heston models} in~\cite{EGnabeyeuPR2025, EGnabeyeuR2025}.

 \medskip
\noindent
 This broader class of volatility models, is obtained by modeling the volatility process as an \textit{affine stochastic Volterra integral equation} (SVIE for short) of \textit{convolution type} with possibly time-dependent coefficients. These processes overcome certain modeling shortcomings of classical affine processes (such as those used in the celebrated Heston model~\cite{Heston1993}), as they allow for trajectories with regularity differing from that of the paths of Brownian motion. In particular, singular kernels give rise to rough processes in the spirit of~\cite{el2019characteristic, ElEuchFukasawaRosenbaum2018}. 

\medskip
\noindent Merton’s portfolio optimization problem has been investigated in the Volterra framework. However, the non-Markovian nature of the volatility process prevents the direct application of classical stochastic control methods based on the Hamilton--Jacobi--Bellman (HJB) partial differential  equation.
In order to circumvent this difficulty,~\cite{HanWong2020b,AichingerDesmettre2021, Gnabeyeu2026b} inspired by~\cite{fouque2018aoptimal} apply the martingale optimality principle in the highly degenerate correlation settings, by adopting a martingale distortion ansatz, originally introduced in the seminal paper by~\cite{zariphopoulou2001solution} and later transferred to a non-Markovian setting with H\"older-type inequalities in \cite{Teh04}. In the non-degenerate multidimensional correlation setting, the martingale distortion approach is no longer applicable. To overcome this issue, \cite{AichingerDesmettre2021} 
provide a verification argument using calculus of convolutions and resolvents in the power utility case, while \cite{Gnabeyeu2026b} establish the verification through a BSDE approach.
\medskip
\noindent
In particular, in~\cite{Gnabeyeu2026b}, the author use a martingale optimality principle combined with a BSDE-based verification argument
to characterize
the optimal strategy and the optimal value function. This martingale optimality principle allows to derive a Riccati 
Backward Stochastic Differential Equation (Riccati BSDE for short) that has a quadratic growth in the control term denoted $\Lambda$. 
The author show that solvability of this non-linear Riccati BSDE is equivalent to solvability in the set of continuous functions, of a system of inhomogeneous Ricatti-Volterra equation and hence get an optimal solution for both the strategy and the value function.

\medskip
\noindent
 The goal of this paper is to extend the results of~\cite{HanWong2020b,AichingerDesmettre2021,Gnabeyeu2026b} by incorporating jumps into the affine stochastic Volterra integral equations describing the volatility process.
Our study is motivated by financial models of stock volatility. In particular, the empirical findings of~\cite{TodorovTauchen2011} suggest that an adequate and complete description of volatility should account for both path roughness and discontinuous movements induced by jumps. This observation also motivates the development of volatility derivative models that simultaneously incorporate rough volatility and volatility jumps; see also~\cite{Xia2022} for further interesting discussion on the topic. 
 
 \medskip
\noindent
The literature on stochastic Volterra equations with jumps is instead very recent, and shows a growing interest. \cite{AlfonsiSzulda2025} establish conditions upon the kernel  and coefficients for the strong existence and pathwise uniqueness of a non-negative c\`adl\`ag solution to stochastic volterra integral equation with jumps. \cite{BondiLivieriPulido2024} derived, under
an affine structure imposed upon the coefficients, a semi-explicit formula for the Fourier–Laplace
transform of the solution.

\medskip
\noindent
We propose to study in this work, utility maximization problem in a framework where volatility incorporates memory
effects and both Brownian and Poisson sources of randomness, as considered in~\cite{BondiLivieriPulido2024,AlfonsiSzulda2025}, while the underlying asset prices remain continuous, in the spirit of~\cite{HanWong2020b,AichingerDesmettre2021,Gnabeyeu2026b}.


\medskip
\noindent {\bf Main contributions.}
Building upon recent developments in volatility~\cite{GnabeyeuKarkarIdboufous2024} and
Volterra models ~\cite{TomasRosenbaum2021,
	EGnabeyeuPR2025,EGnabeyeuR2025} and motivated by recent works and advances on utility maximization under multivariate Volterra volatility modeling \cite{HanWong2020b, AichingerDesmettre2021, Gnabeyeu2026b}, this paper contributes to the literature along two main directions:
\begin{itemize}
	\item[(i)] We introduce a class of \textit{ multivariate affine Volterra stochastic volatility models with jumps } that capture key stylized features of financial markets, including heterogeneous roughness across assets and leverage effects namely, dependence between asset returns and their respective volatilities while incorporating jumps of volatilities.
	\item[(ii)] We establish that this framework remains analytically tractable, allowing for an explicit solution to the Merton problem despite the non-Markovian and multivariate nature of the dynamics, with the optimal value characterized by the solution of an associated Riccati BSDE with jumps (Riccati BSDEJ) whose generator is quadratic in the Brownian stochastic integrand (denoted by \(\Lambda\)) and with an exponential term involving the Poisson stochastic integrand variable (denoted by \(U\)). 
    \item[(iii)] It further extends the models of~\cite{TomasRosenbaum2021, Gnabeyeu2026b} to an inhomogeneous setting, following~\cite{EGnabeyeuPR2025,EGnabeyeuR2025}, while additionally incorporating jumps driven by a Poisson random measure. Our results generalize those of~\cite{HanWong2020b, AichingerDesmettre2021} for rough Heston models, as well as~\cite{Gnabeyeu2026b} in the context of the so-called ``multivariate fake stationary rough Heston model''.
\end{itemize}

\medskip
\noindent {\sc \textbf{Organization of the Work.}}
\noindent 
The remainder of the paper is organized as follows. Section~\ref{sect-preliminaries} provides an overview of the probabilistic model of market that will be used throughout the paper. In particular, we introduce a multi-asset financial market in which volatility is modeled by a multivariate class of \textit{Volterra square-root processes with jumps} driven by an independent Poisson measure.
\noindent For such a market model, we investigate in section~\ref{sect-MertProblem} the classical problem of maximizing the expected utility of terminal wealth, 
for each of the exponential, power and logarithmic utility preferences, and highlight the related martingale optimality principle.
  We then provide a solution for the Merton portfolio optimization problem for a more general correlation structure between asset returns and their respective volatilities. The analysis relies on tools from backward stochastic differential equations with jumps (BSDEJs), whose solutions are explicitly characterized in terms of the solution to associated Riccati--Volterra equations by a verification method.
  In Section~\ref{Sec:Num}, we demonstrate the practical implications of our findings through numerical experiments based on a two-dimensional rough Heston model with jumps and depict the impact of jumps on the variance process. Finally, Section~\ref{sect:proofMresult} is devoted to the proofs of the main results. 
  
\medskip
\noindent {\sc \textbf{Notations.}} 

\smallskip
\noindent $\bullet$ Denote $\mathbb{T} = [0, T] \subset \mathbb{R}_+$, ${\rm Leb}_d$ the Lebesgue measure on $(\R^d, {\cal B}or(\R^d))$, $d\geq1$.

\noindent $\bullet$ $\mathbb{X} := {\cal C}([0,T], \R^d)  (\text{resp.} \quad {\mathcal C_0}([0,T], \R^d))$ denotes the set of continuous functions (resp. null at 0)  from $[0,T]$ to $\R^d $ and ${\cal B}or(\mathbb{X})$ denotes the  Borel $\sigma$-field of $\mathbb{X}$ induced by the $\sup$-norm topology. 

\smallskip
\noindent $\bullet$ Let $E$ represents a subspace of $\mathbb{R}$ and  $\mathcal{N}(0,1)(de)$ denote the density of the standard gaussian law.

\smallskip 
\noindent $\bullet$ For $p\in(0,+\infty)$, $L_{\mathbb H}^p(\P)$ or simply $L^p(\P)$ denote the set of  $\mathbb H$-valued random vectors $X$  defined on a probability space $(\Omega, {\cal A}, \P)$ such that $\|X\|_p:=(\E[\|X\|_{\mathbb H}^p])^{1/p}<+\infty$. 

\smallskip 
\noindent $\bullet$ Let \(\mathcal{M}\) denote the space of all $(\R_+, {\cal B}or(\R))$-measurable functions \(m\) on \(\mathbb{R}_+\) such that the restriction \(m|_{[0, T]}\), for any \(T > 0\), is a \(\mathbb{R}\)-valued finite measure (i.e. the restriction $m|_{[0,T]}$ with $T > 0$ is well-defined). For \(m \in \mathcal{M}\) and a compact set \(E \subset \mathbb{R}_+\), we define the total variation of \(m\) on \(E\) by:

\begin{equation}
    |m|(E) := \sup \left\{ \sum_{j=1}^N |m(E_j)| : \{E_j\}_{j=1}^N \text{ is a finite measurable partition of } E \right\}.
\end{equation}
\noindent We assume that the set of measure $m \in \mathcal{M}$ on \(\R_+\) is of locally bounded variation.

\smallskip 
\noindent $\bullet$ Convolution between a function and a measure. Let \(f : (0, T] \to \mathbb{R}\) be a measurable function and \(m \in \mathcal{M}\). Their convolution (whenever the integral is well-defined) is defined by
\begin{equation}\label{eq:convolmeasure}
	(f * m)(t)= \int_{[0,t)} f(t - s) \, dm(s) = \int_{[0,t)} f(t - s) \, m(ds) = (f\stackrel{m}{*}\mathbf{1})_t, \quad t \in (0, T].
\end{equation}

\smallskip 
\noindent $\bullet$ $X\perp \! \! \!\perp Y$  stands for independence of random variables, vectors or processes $X$ and $Y$.  

\noindent $\bullet$ For a measurable function \( \varphi: \mathbb{R}^+ \to \mathbb{R} \), \(\forall p \geq 1,\) we denote: 

\centerline{$
	  \| \varphi \|^p_{L^p([0,T])} := \int_0^{T} |\varphi(u)|^p \, du,  \; \displaystyle \|\varphi\|_{\infty}=\|\varphi\|_{\sup} := \sup_{u\in \mathbb{R}^+}|\varphi(u)| \; \text{and} \; \displaystyle \|\varphi\|_{\infty,T}=\|\varphi\|_{\sup,T} := \sup_{u\in [0,T]}|\varphi(u)|.
	$}

\smallskip 
\noindent $\bullet$ Let \([0,T]\) be a finite time horizon (\(T<\infty\)). 
Throughout the paper, a weak solution is understood to be defined on a filtered probability space $(\Omega,\mathcal F,\mathbb P,\mathbb F)$, where the filtration $\mathbb F=(\mathcal F_t)_{t\ge0}$ satisfies the usual conditions, namely it is right-continuous and $\mathbb P$-complete. The filtered probability space is assumed to support the following independent random elements:

\begin{itemize}
    \item a \( 2d\)-dimensional $\mathbb{F}$-Brownian motion $ \big(B, B^\top\big) = \big( (B_t)_{t \geq 0}, (B^\top_t)_{t \geq 0}\big)$ for \(d\geq1\) ;
    \item an independent $\mathbb{F}$-Poisson random measure $N$ on $(E, \mathcal{B}or(E))$ with compensator 
    $\xi(V_t, de)\,dt$, where the Levy measure $\xi$ is positive and satisfies
    $\xi(V_t, de) = \nu_0(de) + \sum_{i=1}^{d} V_t^i \, \nu_i(de)$ given a process $V$ 
    and a finite positive measure $\nu_k$ satisfying 
    \begin{align}
    \nu_k(\{0\}) = 0, \mbox{ for }k = 0,...,d .  
    \end{align}
\end{itemize}
\noindent
We denote by $\widetilde{N}(dt,de) := N(dt,de) - \xi(V_t, de)\,dt$ its compensated measure and $\mathcal{P}$ the $\sigma$-algebra of predictable processes, then, we define 
\begin{align*}
L^{\infty}_{\F}([0,T], \R^d) 
&= \left\{ Y:\Omega \times [0,T]\to \R^d,\; \F\text{-prog.~measurable and bounded a.s.} \right\} \\
L^p_{\F}([0,T], \R^d) 
&= \left\{ Y:\Omega \times [0,T]\to \R^d,\; \F\text{-prog.~measurable s.t.~} \E\Big[ \int_0^T |Y_s|^p ds \Big] < \infty \right\} \\
\mathbb{S}^{\infty}_{\F}([0,T], \R^d) 
&= \left\{ Y:\Omega \times [0,T]\to \R^d,\; \F\text{-prog.~measurable s.t.~} \sup_{t\leq T} |Y_t(\omega)|< \infty \text{ a.s.} \right\}\\
\mathbb{S}^{p}_{\F}([0,T], \R^d) 
&= \left\{ Y:\Omega \times [0,T]\to \R^d,\; \F\text{-prog.~measurable s.t.~} \E\Big[\sup_{0\leq t\leq T} |Y_t|^p\Big] < \infty \right\}\\
L^p_{\F}([0, T],\tilde{N}) 
&= \left\{ 
\begin{aligned}
U: \Omega \times [0,T]\times E \to \R,\;
\mathcal{P} \otimes \mathcal{B}or(E)\text{-measurable such that \qquad} \\
 \mathbb{E} \left[\int_0^T \int_E |U_t(e)|^p \, \xi(V_t, de) \, dt \right] < \infty\qquad \qquad\qquad
\end{aligned}
\right\}.
\end{align*}
In the following, unless otherwise specified, $V$ denotes the Volterra volatility defined in \eqref{VolSqrt_}.
Here $|\cdot|$ denotes the Euclidian norm on $\R^d$, \(d\geq1\) for short.  Classically, for $p \in (1, \infty  )$, we define $L^{p, loc}_{\F}([0,T], \R^d)$ as the set of progressive 
processes $Y$ for which there exists a sequence of increasing stopping times 
$\tau_n \uparrow \infty$ such that the stopped processes $Y^{\tau_n}$ are in $L^{p}_{\F}([0,T], \R^d)$ for every $n \geq 1$, and we recall that it consists of all progressive processes $Y$ s.t. 
$ \int_0^T |Y_t|^p dt$ $<$ $\infty$, a.s. Likewise for $\mathbb{S}^{p, loc}_{\F}([0,T], \R^d)$. To unclutter notation, we write $L^{p, loc}_{\F}([0,T])$  instead of $L^{p, loc}_{\F}([0,T], \R^d)$ when 
the context is clear. 

\smallskip 
\noindent $\bullet$  We will use the matrix norm \(|A| = \operatorname{tr}(A^{\top}A)\)
in this paper whenever $A\in \mathbb{R}^{d\times d}$ with $d\geq1$.

\smallskip 
\noindent $\bullet$ Let $M$ be a c\`adl\`ag local martingale vanishing at $0$. The stochastic
exponential $\mathcal{E}(M)$ is defined by
\[
    \mathcal{E}(M)_t
    = \exp\!\left(M_t - \tfrac{1}{2}\langle M^c \rangle_t\right)
    \prod_{0 < s \leq t}\bigl(1 + \Delta M_s\bigr)\exp(-\Delta M_s),
    \quad t \in [0,T],
\]
where $M^c$ denotes the continuous martingale part of $M$,
$\langle M^c \rangle$ its quadratic variation, and $\Delta M_s := M_s - M_{s-}$
the jump of $M$ at time $s$. When $\Delta M_t > -1$ for all $t \in [0,T]$
$\mathbb{P}$-a.s., $\mathcal{E}(M)$ is a positive local martingale thus a supermartingale.

\medskip
\noindent Our problem will be formulated under a given complete filtered probability space \((\Omega,\mathcal{F},\mathbb{F} = \{\mathcal{F}_t\}_{0\leq t\leq T}, \P)\). The filtration \(\mathbb{F}\) is not necessarily the augmented filtration generated by \( (B, B^\top,N)\) ;
thus, it can be a strictly larger filtration. Here \(\P\) is a real-world probability measure from which a family of equivalent probability measures can be generated. 

\section{Preliminaries: Multivariate affine Volterra models with jumps}\label{sect-preliminaries}
\subsection{Volterra processes with convolution kernels and jumps.}\label{sect-prelim}
\noindent Fix $T > 0$, $d\in \N$.
We let $K=\diag(K_1,\ldots,K_d)$ be diagonal with scalar kernels $K_i\in L^{2}([0,T],\R)$ on the diagonal, $\varphi=\diag(\varphi^1,\ldots,\varphi^d)$, $\sigma^v=\diag(\sigma^v_1,\ldots,\sigma^v_d)$, $\varsigma=\diag(\varsigma^1,\ldots,\varsigma^d)$ with \(\varsigma^i\) a (locally) positive bounded Borel 
function and $D$ a $\R^{d\times d}$-valued matrix satisfying
\[
D_{ij} \ge 0 \ \text{for } i \neq j.
\]

Denote by $\mu : [0,T] \to \mathbb{R}^d_{+}$ a positive function. Let $\eta = (\eta^1,\ldots,\eta^d)^\top$ be a measurable map $\eta : E \to \mathbb{R}^d$.
Let $V=(V^1,\ldots, V^d)^\top$  be the following $\R^d_+$--valued scaled Volterra square--root process driven by an $d$-dimensional process $W=(W^1,\ldots,W^d)^\T$ and the compensated random measure $\widetilde{N}$
:
\begin{equation} 
	\label{VolSqrt_}
	\begin{aligned}
		V_t = \varphi(t) V_0 + \int_0^t K(t-s) \Big( \big(\mu(s) + D V_s\big) ds  + \sigma^v \varsigma(s)\sqrt{\diag(V_s)}dW_s + \int_{E}\eta(e)\tilde{N}(ds,de) \Big), \quad V_0\perp\!\!\!\perp W \perp\!\!\!\perp \widetilde{N}.
	\end{aligned}
\end{equation}
Here $\eta$ specifies how a mark $e \in E$ 
generated by the Poisson random measure $N$ is translated into a jump $\eta(e)$ in 
each component of the vector of volatility. The last two terms in~\eqref{VolSqrt_} form an $\R^d-$valued, purely discontinuous local martingale. We next discuss conditions under which the inhomogeneous stochastic Volterra square-root equation with jumps~\eqref{VolSqrt_} admits a solution satisfying the moment estimate
\begin{equation}\label{eq:moments V1}
    \sup_{0\leq t\leq T} \E\left[|V_t|^p\right]
    \leq C\left(1+\E\left[|V_0|^p\right]\right)<\infty,
    \qquad p>0,
\end{equation}
for some constant $C>0$.  Throughout, we work under the following assumption.


\begin{assumption}[On Volterra Equations with convolutive kernels and jump]\label{assump:kernelJumpVolterra}
	\begin{enumerate}
		\item[(i)] 
	Assume that \(K\) is diagonal with scalar kernels $K_i\in L^2_{\mathrm{loc}}(\mathbb{R}_+)$ on the diagonal for \(i=1,\ldots,d\) that are completely monotone on $(0, \infty)$, not identically zero, 
     and there exists $\vartheta_i\in(0,1]$ such that for every $0\leq s < t\leq T <\infty $
\begin{equation}
(t-s)^{-2\vartheta_i}
\Big(
\int_s^t K_i(t-u)^2\,du
+
\int_0^T
\big(K_i((t-s)+u)-K_i(u)\big)^2\,du
\Big)
<\infty.
\end{equation}
	\noindent
	\item[(ii)]  Assume that, for all $i = 1, \ldots, d$ and all $k = 0, \ldots, d$, the jump sizes satisfy
\[
\nu_k\big(\{ e \in E : \eta^i(e) < 0 \}\big) = 0,
\]
and are square-integrable i.e.,
\[
\int_E \big(\eta^i(e)\big)^2 \, \nu_k(de) < \infty.
\]

        \noindent
		\item[(iii)]
        Finally, assume that \( V_0^i \in L^p(\mathbb{P}) \) for some suitable \( p \in (0, +\infty) \), such that
		the process $t \to v_0^i(t) =V_0^i \varphi^i(t)$ is strictly positive and  absolutely continuous and $(\mathcal F_t)$-adapted.
		Moreover, for some $\delta_i > 0$, for any $p > 0$, there exists \(C:=C_{T,p}\) such that

		\begin{equation*}
		\mathbb{E} \,\!\Big(\sup_{t \in [0,T]} |v_0^i(t)|^p\Big) < +\infty,\quad
		\mathbb{E}\!\big[\,|v_0^i(t') - v_0^i(t)|^p\,\big] 
		\le C \Big( 1 + \mathbb{E}\,\big[\sup_{t \in [0,T]} |v_0^i(t)|^p\big] \Big) |t' - t|^{\delta_i p}.
		\end{equation*}
	\end{enumerate}
\end{assumption}
\noindent
The affine structure of the drift coefficient $b(t,x) = \mu(t)+ Dx$ in~\eqref{VolSqrt_},
immediately yields its global Lipschitz continuity with respect to the state variable, uniformly over $t\in[0,T]$. In addition, the drift term \(b\) and diffusion coefficient \(\sigma(t, x):=\sigma(t)\sqrt{\diag(x)}\), together with the jump characteristics $(\eta,\pi)$ under condition~\ref{assump:kernelJumpVolterra}~$(ii)$, satisfy the standard linear growth condition. More precisely, there exists a constant $C_{b,\sigma,\eta}>0$ such that
\begin{equation}\label{eq:LinearGrowth}
	|b(t, x)| + \|\sigma(t, x)\| + \int_E |\eta(e)|^2\,\pi(x,de) \leq C_{b,\sigma, \eta}(1 + |x|),
	\quad \text{for every } (t,x)\in[0,T]\times\mathbb{R}^d.
\end{equation}
\begin{Remark}\label{rm:Kernels}
	\noindent 1. Assumption~\ref{assump:kernelJumpVolterra}~$(i)$ covers, for instance,  constant non-negative kernels,  fractional kernels  of the form $K_{\alpha}(t) =\frac{t^{\alpha-1}}{\Gamma(\alpha)}\mathbf{1}_{\mathbb{R}_+}(t)$ with $\alpha  \in(\frac12,1]$, exponentially decaying kernels ${\rm e}^{-\beta t}$ with $\beta>0$ and more generally the the exponentially decaying fractional kernels  \(K_{\alpha,\beta}(t) = \frac{t^{\alpha-1}}{\Gamma(\alpha)} e^{-\beta t}\mathbf{1}_{\mathbb{R}_+}
	\) with \( \alpha \in \left( \tfrac{1}{2}, 1 \right] \) and \( \beta \ge 0 \) ( see e.g. 
	\cite{EGnabeyeu2025,GnabeyeuPages2026b}, \cite[ Example 2.2 ]{GnabeyeuPages2026}). These kernels satisfy $(\widehat {\cal K}^{cont}_{\widehat \theta})$ and $({\cal K}^{cont}_{\theta})$, for $\alpha >1/2$ with $\vartheta = \min\bigl( \alpha-\frac12,\; 1\bigr)$.
	
	\noindent 2. The roughness of the variance paths is determined by the parameter $\alpha$ linked to the Hurst parameter $H$ via the relation $\alpha=H+\frac{1}{2}$.
	For $\alpha\rightarrow 1$ we recover the classical markovian square root process.

\end{Remark}
The existence of a solution $V$ to~\eqref{VolSqrt_} satisfying the bounded moment estimate~\eqref{eq:moments V1} is guaranteed under suitable specifications of $\varphi$, as detailed below for the one-dimensional case $d=1$:
\begin{itemize}[leftmargin=*, labelsep=0.5em] 
    \item \noindent In the case of continuous stochastic Volterra square root equations, i.e. when \(\eta\equiv0\) and the kernel is of fractional type (corresponding to \(K=K_{\alpha}\) with \(\alpha\in [\frac12,1)\)),  it follows from~\cite{EGnabeyeuR2025} 
    that Equation~\eqref{VolSqrt_} admits at least a unique (in law) positive weak solution as a scaling limit of a sequence of 
time-modulated Hawkes processes with heavy-tailed kernels in a nearly unstable regime. Its multidimensional extension $d\geq 1$ is straightforward at least when \(D\) is diagonal, as the argument can be carried out component by component.
Moreover, under assumption~\ref{assump:kernelJumpVolterra}~$(i)$ and~$(iii)$, each component \( t \mapsto V_t^i \) of a solution to  Equation~\eqref{VolSqrt_}  starting from  $V_0^i$ 
has a \( \big( \delta_i \wedge \vartheta_i - \epsilon \big) \)-H\"older pathwise continuous modification  on $\R_+$ for sufficiently small \( \epsilon > 0 \).
\item Under Assumption \ref{assump:kernelJumpVolterra}, and the linear growth~\eqref{eq:LinearGrowth}, when \(\varphi=\varsigma=1\)  and $V_0 \in \R_+$, ~\cite[Theorem 2.7. or Theorem 5.2.]{AlfonsiSzulda2025} establishes the strong existence and pathwise uniqueness of a non-negative c\`adl\`ag solution of the stochastic Volterra equation with jumps (\ref{VolSqrt_}) such that~\eqref{eq:moments V1} holds. 
\end{itemize}


\subsection{Formulation of the stochastic Market model}
\noindent We  consider a frictionless financial market on $[0,T]$  
with \(d+1\) securities, consisting of a  bond and \(d\) stocks. The non--risky asset  $S^0$ satisfies the (stochastic) ordinary differential equation: 
\begin{align*}
	dS^0_t = S^0_t r(t) dt,
\end{align*}
with a time-dependent deterministic  short risk-free rate $r:\R_+ \to \R$, and  $d$ risky assets (stock or index) whose return vector process $(S_t)_{t \ge 0} = (S_{t}^1, \ldots, S_{t}^d)_{t \ge 0}$ is defined via the dynamics given by the vector-stochastic differential equation (SDE):
\begin{align}
	\label{eq:stocks}
	dS_t = \diag(S_t) \big[ \big( r(t) {\bold{1}_d} + \sigma_t \lambda_t  \big)dt + \sigma_t dB_t \big],
\end{align}
driven by a $d$-dimensional Brownian motion $B$, with a $d\times d$-matrix valued  continuous stochastic volatility process $\sigma$ whose dynamics is driven by~\eqref{VolSqrt_}
and a $\R^d$-valued continuous stochastic process $\lambda$, 
called {\it market price of risk}.  Here ${\bold{1}_d}$ denotes the vector in $\R^d$ with all components equal to $1$ and the correlation structure of $W$ with $B$ is given by
{\begin{align}\label{eq:correstructureheston}
		W^i = \rho_i B^i + \sqrt{1-\rho_i^2} B^{\perp,i} =  \Sigma_i^\top  B_t + \sqrt{1-\Sigma_i^\T \Sigma_i} B^{\perp,i}_t, \quad i=1,\ldots,d,
	\end{align}
	for some $(\rho_1,\ldots,\rho_d)\in[-1,1]^d$},
where $(0,\ldots,\rho_i,\ldots,0)^\T:=\Sigma_i \in \R^{d}$ is such that $\Sigma_i^\T \Sigma_i\leq1$,  and $B^{\perp}$ $=$ $(B^{\perp,1},\ldots,B^{\perp,d})^\T$ is an $d$--dimensional Brownian motion independent of $B$. The correlation $\rho_i $ between stock price \(S^i\) and variance \(V^i\) is assumed constant.
Note that $d\langle W^i \rangle_t = dt$  but $W^i$ and $W^j$ can be correlated, hence $W$ is not necessarily a Brownian motion.

\medskip
\noindent Observe that 
processes $\lambda$ and $\sigma$ are $\F$-adapted, possibly unbounded,  but not necessarily adapted to the filtration generated by $W$.  We point out that $\F$ may be strictly larger than the augmented filtration generated by $B$ and $B^{\perp}$ as we deal with weak solutions to stochastic Volterra equations. 

\medskip
\noindent
We assume that  $\sigma$ in~\eqref{eq:stocks} is given by $\sigma = \sigma(V) = \sqrt{\diag(V)}$, where the $\R^d_+$--valued scaled process $V$  is defined in~\eqref{VolSqrt_}. We will be chiefly interested in the case where \(\lambda_t\) is linear
in \(\sigma_t\). More specifically, the market price of risk (risk premium) is assumed to be in the form
{$\lambda$ $=$ $\big(\theta_1\sqrt{V^1},\ldots,\theta_d \sqrt{V^d}\big)^\top$}, for some constant {$\theta_i \geq 0$},  so that the dynamics  for the stock prices \eqref{eq:stocks} reads following \cite{Kraft2005,Gnabeyeu2026b}
\begin{align}
	\label{eq:hestonS}
	dS^i_t = S^i_t \left( r(t)  + \theta_i V^i_t   \right) dt + S^i_t \sqrt{V^i_t} dB^i_t, \quad i=1,\ldots, d.
\end{align}
Since \(S\) is fully determined by \(V\), the existence of $S$ readily follows from that of $V$. In particular, weak existence of c\`adl\`ag solution $V$ of~\eqref{VolSqrt_} is established  under suitable assumptions on the kernel $K$ and
 the jump components $(\eta,\xi)$ (see, e.g., Assumption~\ref{assump:kernelJumpVolterra} and the discussion that follows).
 we may therefore assume that the stochastic Volterra equation (\ref{eq:hestonS})-(\ref{VolSqrt_})  admits a unique in law 
c\`adl\`ag $\R^{d}_+ \times \R^{d}_+$-valued weak solution $(S,V)$ for any initial condition $(S_0, V_0) \in \R^{d}_+ \times \R^{d}_+$ defined on some filtered probability space $(\Omega,\mathcal F, (\mathcal F)_{t\geq 0}, \mathbb P)$ such that $V$ satisfied~\eqref{eq:moments V1}.

\noindent From now on, we set \( g_0(t):= \varphi(t) V_0 + \int_0^t K(t-s)\mu(s) ds \) and \(Z_t := \int_{0}^{t}\big(D V_s ds + \sigma^v\varsigma(s) \sqrt{\diag(V_s)}dW_s + \int_{E}\eta(e)\tilde{N}(de,ds) \big) ,\) for all \( t \geq 0\) with $\sigma(V) = \sqrt{\diag(V)}$ so that Equation~\eqref{VolSqrt_} reads
\begin{equation} 
	\label{VolSqrt2} 
	\begin{aligned}
		V_t = g_0(t) + \int_0^t K(t-s)\Big(  D V_s ds  + \sigma^v \varsigma(s)\sigma(V_s)dW_s + \int_{E}\eta(e)\tilde{N}(ds,de) \Big) =  g_0(t) + \int_0^t K(t-s) dZ_s.
	\end{aligned}
\end{equation}
Here \(Z\) is a $d-$dimensional semimartingale starting at $0$.
Finally, we consider the $\R^d$-valued process for \( s\geq t,\) 
\begin{align}\label{eq:processg}
	g_t(s)= g_0(s) + \int_0^t K(s-u) \big(D V_u du + \sigma^v\varsigma(u) \sigma(V_u)dW_u + \int_{E}\eta(e)\tilde{N}(du,de)\big) =  g_0(s) + \int_0^t K(s-u) dZ_u.
\end{align}
One notes that for each, $s\leq T$, $(g_t(s))_{t\leq s}$ is the adjusted forward process 
\begin{align}\label{eq:Condprocessg}
	g_t(s) &= \; \mathbb E\Big[  V_s - \int_t^s K(s-u)DV_udu \Mid \cF_t\Big].
\end{align}
This adjusted forward process is commonly used (see, e.g.,~\cite{Gnabeyeu2026a, Gnabeyeu2026b}) to elucidate the affine structure of affine Volterra processes with càdlàg trajectories.\\
\noindent The process in~\eqref{VolSqrt2} is non-Markovian and non-semimartingale in general. Note that our model (\ref{eq:hestonS})-(\ref{eq:correstructureheston})-(\ref{VolSqrt_}) features correlation between the stocks and between a stock and its volatility. 
This also provides an extension to the inhomogeneous setting and jumps of the models considered in~\cite{  TomasRosenbaum2021, AichingerDesmettre2021, Gnabeyeu2026a, Gnabeyeu2026b}.


\section{The Merton's utility maximization problem and related wealth}\label{sect-MertProblem}
\noindent {$\rhd$ {\em Preliminaries and Problem formulation}:}  
Since we work with weak solutions to the stochastic Volterra equations with jumps \eqref{eq:hestonS}--\eqref{VolSqrt_}, the Brownian motions and the Poisson random measure are part of the solution. As 
the expected utility depends only on the law of the wealth process, we may fix, without loss of generality, a weak solution $(S,V,B,B^\top,N)$ to \eqref{eq:hestonS}--\eqref{VolSqrt_}, defined on the filtered probability space $(\Omega,\mathcal F,\mathbb F,\mathbb P)$, 
where $\mathbb F=(\mathcal F_t)_{t\ge0}$ satisfies the usual conditions.

\smallskip
\noindent
We consider the classical problem of maximizing the expected utility of terminal wealth with particular emphasis on exponential , power, and logarithmic utilities. One introduces $\tilde{\mathcal{A}}$ the set of all $\mathbb{F}$-progressively measurable processes $ (\alpha_t)_{t \in [0,T]}$ valued in the Polish space  \(\R^d\).
The investor's goal in the Merton problem is to find an optimal strategy so as to maximize the expected utility of terminal wealth. Specifically, given a utility function $U$ on a subset of $\R$ and starting from an initial capital $x_0$,
the objective of the agent is

\begin{equation}\label{eq:value0}
	\mathcal{V}(x_0,V_0):= \sup_{\alpha(\cdot) \in \mathcal A}
	\mathbb E\!\left[ U\!\left(X_T^\alpha\right) \right],  \;\text{given} \; x_0 \;\text{and} \;V_0.
\end{equation}
with $X^\alpha$ the wealth
controlled by $\alpha \in \mathcal A$, starting from $x_0$ at time $0$
and $\mathcal A$ the subset of controls $\alpha \in \tilde{\mathcal{A}} $ such that the family \(\left\{ U\!\left(X_\tau^\alpha\right) : \tau \in [0,T] \right\}\)
is uniformly integrable. 

The idea is that an admissible control (or strategy) is optimal\footnote{Optimal strategy: No expected gain from deviation, this implies martingality} if the associated value process is a martingale and for any other admissible control, it is a supermartingale. That is the classic martingale \textit{optimality principle}, see, e.g., \cite{HuImkellerMueller2005}, \cite[Section 6.6.1]{pham2009continuous} ,\cite{JeanblancEtAl2012} or more recently \cite{turki2026portfolio} when we deal with signal on the jumps. 

\begin{Definition}[Martingale optimality principle]\label{def:Martopt}
	The Problem (\ref{eq:value0}) can be solved by constructing a family of processes $\{ J^\alpha_t \}_{ t \in [0, T]}$, $\alpha \in \cA$, satisfying the conditions:
	\begin{enumerate}
		\item $J^\alpha_T = U(X^\alpha_T)$ for all $\alpha \in \cA$; 
		\item $J^\alpha_0$ is a constant, independent of  $\alpha \in \cA$;
		\item $J^\alpha$ is a supermartingale for all $\alpha \in \cA$, and there exists  $\alpha^* \in \cA$ such that $J^{\alpha^*}$ is a martingale.
	\end{enumerate}
\end{Definition}
\noindent A family of processes with the above properties can now be used to compare the expected utilities of an arbitrary strategy $\alpha \in \cA$ and the strategy $\alpha^*$:
\begin{equation*}
	\E[ U(X_T^\alpha) ] = \E[ J^\alpha_T ] \leq J^\alpha_0 = J^{\alpha^*}_0 = \E[ J^{\alpha^*}_T]  = \E[ U(X^{\alpha^*}_T)]=\mathcal{V}(x_0,V_0).
\end{equation*}
where $X^{\alpha^*}$ is the wealth process under $\alpha^*$. Thus the strategy $\alpha^*$ is indeed our desired optimal portfolio strategy.

\medskip
\noindent We will offer explicit solutions to the optimal portfolio policies that depend on a function  $\psi$, solution of a multivariate Riccati-Volterra equation in the spirit of~\cite{Gnabeyeu2026b}.  
In particular, as shown in Theorem~\ref{Thm:VolSqrtAll}, the Laplace transform of the integrated variance process \(V\) admits a semi-closed-form exponential--affine representation. This representation is expressed in terms of the (continuous) solution $\psi$, when it exists, to an associated time-inhomogeneous Riccati-type Volterra equation. It is then used to obtain an explicit characterization of the ansatz processes $({ J^\alpha_t }_{t \in [0,T]})$.
Let $\Lambda$ and $U$ be defined as 
\begin{equation}\label{eq:LambdaU}
	\Lambda_t^i:=  \sigma^v_i\varsigma^i(t) \psi^i(T-t) \sqrt{V^i_t}, \; i=1,\ldots,d, \; \text{and}\; U_t(e) :=\psi^\top(T-t) \eta(e) = \sum_{i=1}^d\psi^i(T-t) \eta^i(e), \; e\in E, \; 0 \leq t \leq T.
\end{equation}
We will work under the following standing assumption,
\begin{assumption}\label{assm:gen}
	Assume that there exists a solution
	$\psi \in C([0,T],\mathbb{R}^d)$ to the above-mentioned inhomogeneous Riccati--Volterra equation 
    such that the below exponential integrability condition holds true
\begin{equation}\label{eq:cond_U}
    \int_E \exp\!\Big( p \sup_{t \in [0,T]} |U_t(e)| \Big)\, \nu_i(de) < \infty,
\end{equation}
for some $p > 0$ sufficiently large and for all $i = 0, \ldots, d$. Moreover, we define for each $i = 1, \ldots, d$,
\begin{equation}\label{eq:beta_i}
\beta_i := \int_E 
\Big(1 + \exp\!\big(2 \sup_{t \in [0,T]} |U_t(e)|\big)\Big)
\, \nu_i(de).
\end{equation}
 We furthermore assume that the solution
	$\psi \in C([0,T],\mathbb{R}^d)$  
    satisfies the below appropriate
	boundedness condition i.e. is such that 
	\begin{equation}\label{eq:condtheta}
		\max_{1 \leq i \leq d} \Big[ \sup_{t \in [0,T]} \left( \theta_i^2 + (\sigma^v_i)^2 \varsigma^i(t)^2 \psi^i(T-t)^2 \right) + \beta_i \Big]\leq \frac{a}{a(p)},
	\end{equation}
	holds for a sufficient large $p > 1$, where the constant $a(p)$ is given by 
	{  \begin{equation}a(p)=\max \Big[p \left(2 + |\Sigma| \right),   {2 (8p^2 {- 2p}) \left( 1  + |{\Sigma}|^2  \right)}, { p \left( 1  + |{\Sigma}|^2  \right)}, \,{ 4 p }  \Big]. \label{eq:constap}
	\end{equation}}
	and the constant $a>0$ is such that $\E\left[\exp\big(a\int_0^T \sum_{i=1}^d V^i_s ds\big)\right] < \infty$.\\

\end{assumption}


\begin{Remark}[on Assumption~\ref{assm:gen}:]\label{rm:ass_} 
1. The first condition~\eqref{eq:cond_U} on $U$ is particularly useful for establishing the martingale property in Lemma~5.1. A similar condition appears in~\cite{richter2014explicit} in~\cite{richter2014explicit} (see proposition  4.2 in section~4.1). 
If $U_t$ is non-positive for every $t\in [0,T]$, then condition~\eqref{eq:cond_U} can be relaxed to 
\begin{equation}\label{eq:Relax_cond_U}
\sup_{t \in [0,T]}\int_E
|U_t(e)|^2
\nu_i(de)<\infty,\;\text{i.e.} \; U\in L^\infty\big([0,T];L^2_{\nu_i}(E,\R)\big) \;\text{for every }\; i = 0, \ldots, d.
\end{equation}
In this case, we instead set $\beta_i := \sup_{t \in [0,T]}\int_E  |U_t(e)|
\, \nu_i(de),$ for each $i = 1, \ldots, d$, which are well defined and finite by the Cauchy--Schwarz inequality and the finiteness of $\nu_i(E)$ for each $i=0,\ldots,d$. See e.g., the proof of Lemma~\ref{lm: extended_m_EG_lemma} together with the subsequent remark.  Note that~\eqref{eq:Relax_cond_U} is in particular satisfied by $U$ in~\eqref{eq:LambdaU} whenever $\psi\in C([0,T],\mathbb{R}_-^d)$ and Assumption~\ref{assump:kernelJumpVolterra}~(ii) holds.

\medskip
\noindent 2. The constants \(\beta_i\), $i = 1, \ldots, d$ in~\eqref{eq:beta_i} are well defined and finite due to the finiteness of $\nu_i$ for each $i = 1, \ldots, d$.
Note that if Assumption~\ref{assm:gen} hold, then letting  \(\beta:=(\beta_1, \ldots,\beta_d)^\top\),
\begin{equation}\label{eq:assumption_novikov}
	\E \Big[ \exp\Big( a(p)\int_0^T \big(  |\lambda_s|^2 + \left|\Lambda_s\right|^2 + \beta^\top V_s\big)ds \Big) \Big] \;< \;  \infty,
\end{equation}
holds for some $p > 1$ and a constant $a(p)$ given by~\eqref{eq:constap}.

\noindent In fact, under Assumption~\ref{assm:gen}, we will have 
\begin{equation}
	a(p)\left( |\lambda_s|^2 + \left|\Lambda_s\right|^2 + \beta^\top V_s \right) \; = \; a(p) \sum_{i=1}^d  \left( \theta_i^2 + (\sigma^v_i)^2 \varsigma^i(s)^2\psi^i(T-s)^2 + \beta_i \right)\,V_s^i \; \leq \;  a \sum_{i=1}^d V_s^i,
\end{equation} 
which implies that $\E \left[ \exp\Big(  a(p) \int_0^T \Big( |\lambda_s|^2 + \left|\Lambda_s\right|^2 + \beta^\top V_s \Big)ds\Big) \right]< \infty$, thanks to the exponential-affine representation in Theorem~\ref{Thm:VolSqrtAll} ( with \(\mathcal{M} \ni m(\dd s) := a{\bold{1}_d}\,\delta_0(\dd s) \)).
\end{Remark}

\medskip
\noindent{\bf Remark:} 
Condition \eqref{eq:condtheta} concerns the risk premium constants  $(\theta_1,\ldots, \theta_d)$. For a large enough constant $a>0$, from Theorem~\ref{Thm:VolSqrtAll} ( with \(\mathcal{M} \ni m(\dd s) := a{\bold{1}_d}\,\delta_0(\dd s) \)),   a sufficient condition ensuring that
$\E\big[\exp\big(a\int_0^T \sum_{i=1}^d V^i_s ds\big)\big]<\infty$ is the existence of a continuous solution $\tilde{\psi}$  on $[0,T]$ to the inhomogeneous Riccati--Volterra equation with L\'evy exponential jump compensator \(\forall t \in [0,T]\) and $i=1,\ldots,d$,
\begin{align} 
	\tilde{\psi}^i(t) &= \; \int_0^t K_i(t-s) \Big(a+ \big(D^\T\tilde{\psi}(s)\big)_i + \frac{(\sigma^v_i)^2}{2}(\varsigma^i(T-s)\tilde{\psi}^i(s)) ^2\Big) ds  + \int_{E}h_1\big(\psi(T-s)^\top\eta(e)\big)\nu_i(de). 
\end{align} 
where  $h_\lambda$ is the scaled compensated exponential function defined for every $\lambda>0$ by
\begin{equation}\label{eq:ScaledExpCom_funct}
h_\lambda(x) = \frac{e^{\lambda x} - \lambda x - 1}
{\lambda}, \quad \text{for all } x \in \mathbb{R}.
\end{equation}

\noindent Exponential, power and logarithmic utility functions are widely adopted in the literature and display distinct risk-aversion properties. Specifically, the power utility function exhibits constant relative risk aversion (CRRA), 
whereas the exponential utility function is characterized by constant absolute risk aversion (CARA). Optimal strategies corresponding to these utility functions are presented in the sequel, within the framework of the model introduced in Section~\ref{sect-prelim}.
\subsection{Optimal strategy for the exponential utility maximization problem.}\label{subsect:Expo}
\noindent In this subsection, we consider the exponential utility case.
Here, let $\pi_t$ denote the vector of the amounts invested in the risky assets $S$ at time $t$ in a self--financing strategy. We assume that the the process \((\pi_t)_{t\geq0}\) are progressively measurable. Then, the dynamics of the wealth $X^{\pi}$ of the portfolio we seek to optimize is given by 
\begin{align*}
	\mathrm d X^\pi_t
	&=  \bigl(  \pi_t^\top \big( r(t) {\bold{1}_d} + \sigma(V_t) \lambda_t  \big) + (X^\pi_t - \pi_t^\top {\bold{1}_d})r(t) \bigr)\,\mathrm dt + \pi_t^\top\sigma(V_t)  \,\mathrm d B_t \\
	&=  \bigl(r(t) X^\pi_t + \pi_t^\top  \sigma(V_t) \lambda_t\bigr)\,\mathrm dt + \pi_t^\top   \sigma(V_t) \,\mathrm d B_t.
\end{align*}
Letting $\alpha_t := \sigma^\top(V_t)\pi_t $ be the investment strategy, the wealth $X^{\alpha}$ reads:
\begin{align}
	\label{eq:wealth}
	dX^{\alpha}_t &= \big( r(t) X^{\alpha}_t  + \alpha_t^\T \lambda_t \big) dt + \alpha_t^\T dB_t, \quad t \geq 0, \quad X_0^\alpha = x_0 \in \R. 
\end{align} 
By a standard calculation, the wealth process is then given by
\begin{equation}\label{eq:wealthProcess} 
	X_t = e^{ \int^t_0 r(s) ds} \Big( x_0 + \int_0^t e^{ -\int^s_0 r(u) du}  \Big(\alpha_s^\T dB_s + \alpha_s^\T \lambda_s \mathrm{d}s \Big) \Big). 
\end{equation}
Note that it is sufficient to assume that \(\int_0^t (\left|\lambda_s\right|^2 + \left|\alpha_s\right|^2 )\, \mathrm{d}s < +\infty \) almost surely for all \( t \ge 0\)
in order to construct the stochastic integrals in Equation~\eqref{eq:wealthProcess}. This boundedness condition holds owing to the inequality \( |z|^2 \le 2 e^{|z|}, \) for all $z \in \R $, together with the condition~\eqref{eq:assumption_novikov}, and the following admissibility assumption, which is consistent with \cite[Definition~3.8]{Gnabeyeu2026b}. 
	The set of all admissible investment strategies is denoted as $\cA$ and is naturally defined by
    \begin{equation}\label{eq:AdmStrat2}
	\mathcal A  = \left\{ 
	\begin{aligned}
		& (\alpha_t)_{t\in [0,T]}\in {L^{2,loc}_{\F}([0,T], \R^d)} \mbox{ such that \eqref{eq:wealth} admits a  solution }~ X_t^{\alpha} \\ 
		& \text{~for which~} \Big(\exp\big[ - \gamma e^{ \int^T_t r(u) du} X_t^{\alpha} \big]\Big)_{t\in[0,T]} \text{~is of class } D.
	\end{aligned}
	\right\}
	\end{equation}
Recall that a process is said to be of class D if the family
 \[\Big\{\exp\big[ - \gamma e^{ \int^T_\tau r(u) du} X_\tau^{\alpha} \big]: \tau \text{ stopping time valued in } [0, T] \Big\}\] is uniformly integrable.
 
\noindent The investor now considers the Merton exponential utility optimization problem, i.e. its aim is to find the value function $\mathcal{V}^\xi(x_0,V_0)$ for the CARA utility function such that
\begin{equation}\label{Expobj}
	\mathcal{V}^\xi(x_0,V_0) = \sup_{\alpha(\cdot) \in \cA } \E_{x_0,V_0}\left[U(X_T^\alpha-\xi) \right],
\end{equation}
where \(U(x):=-\frac{1}{\gamma}\exp(-\gamma x )\), $\xi$ is a $\mathcal{F}_T$-measure terminal condition affine in $\int_0^TV_sds$ (can be interpreted as a variance swap product) and $\gamma >0 $ the risk aversion coefficient of the investor .\\

\noindent From the dynamic of the controlled wealth process given in Equation~\eqref{eq:wealthPowerSol} 
we have \begin{equation}\label{eq:LogUtil}
		U(X_T^\alpha-\xi) =
		-\frac{1}{\gamma}\exp(-\gamma x_0  e^{ \int^T_0 r(s) ds}) \exp\!\left(-\gamma \Big(\int_0^T e^{ \int^T_s r(u) du}  \Big(\alpha_s^\T dB_s + \alpha_s^\T \lambda_s \mathrm{d}s \Big)-\xi\Big)
	\right).
	\end{equation}
Similar to~\cite{HuImkellerMueller2005},
	we consider the family of stochastic processes \((J^\alpha)^\alpha\)  defined for every \( t\in[0,T]\) by \begin{equation}\label{eq:ansatzExpo}J_t^\alpha:=
		-\frac{1}{\gamma}\exp(-\gamma x_0  e^{ \int^T_0 r(s) ds}) \exp\!\left(-\gamma \int_0^t e^{ \int^T_s r(u) du}  \Big(\alpha_s^\T dB_s + \alpha_s^\T \lambda_s \mathrm{d}s \Big) + \gamma Y_t \right) 
    \end{equation}where the triplet \((Y,\Lambda, U)\) satisfies a Backward SDEs with jumps (BSDEJ for short) 
    under \(\P\) with a driver \(f: [0,T] \times \mathbb{R} \times \R^d \times \mathbb{R} \to \R\) of the form:
	\begin{equation}\label{eq:GammaDef2}
		\left\{
		\begin{array}{ccl}
			dY_t &=&  \; -f(t,Y_t ,\Lambda_t, U_t)dt + \Lambda_t^\top dW_t + \int_{E}U_t(e)\tilde{N}(dt,de), \\[1mm]
			Y_T &=& \Xi =\int_0^T \zeta V_s ds =: \int_0^T \sum_{i=1}^d \zeta_i V^i_s ds. 
		\end{array}  
		\right.
	\end{equation}
    for some $ \zeta := (\zeta_1,\cdots, \zeta_d) \in \R^d$ such that $\zeta_i\leq\frac{\theta_i^2}{2\gamma}$ for $i=1,\cdots,d$.   We refer the reader to~\cite{Delong2013} for a detailed presentation of the theory of BSDEJ. 
    In these terms we are bound to choose a function \(f\) for which \(J_t^\alpha\) is a
	supermartingale for all \(\alpha\in \mathcal A\) and there exists a \(\alpha^*\in \mathcal A\) such that \(J_t^{\alpha^*}\) is a martingale. 

    \medskip
    \noindent We start by the below proposition establishing that, for some \(p>1\), the triplet $\left(Y,\Lambda, U\right)$  is a $\mathbb{S}^{p}_{\F}([0,T], \R) \times L^2_{\F}([0,T], \R^d)\times L^2_{\F}([0,T], \tilde{N})$-valued solution to a Riccati BSDE with jumps provided solvability in \(C([0,T],\R^d \setminus\{0\})\) of an inhomogeneous Ricatti-Volterra equation.
\begin{Proposition}
	\label{prop:ExpoExp_riccati} 
	Assume that there exists  a solution $\psi \in C([0,T],\R^d)$ to the inhomogeneous integral Riccati-Volterra equation with L\'evy exponential jump compensator:
	\begin{align}
		\psi^i(t)&= \int_0^t K_i(t-s)\big(\zeta_i-\frac{ \theta_i^2}{2\gamma}  + F_i(T-s,\psi(s))\big) ds, \quad i=1,\ldots,d,  
		\label{eq:RiccatiExpTilpsi1} \\
		F_i(s,\psi) &=-\theta_i \rho_i\sigma^v_i \varsigma^i(s) \psi^i + (D^\top \psi)_i + \gamma \frac {(\sigma^v_i)^2} 2 (1- \rho^2_i) (\varsigma^i(s)\psi^i)^2 + \int_{E} h_\gamma(\psi^\top \eta(e))\nu_i(de).
		\label{eq:RiccatiExpTilpsi2}
	\end{align} 
	Let $\left(Y, \Lambda, U\right)$ be defined as 
	\begin{equation}\label{eq:SolbsdeExpo}
		\left\{
		\begin{array}{ccl}
			Y_t &=&  \sum_{i=1}^d \int_0^t \zeta_i V^i_s ds + \int_t^T \Big(\int_{E}h_{\gamma}\big(\psi(T-s)^\top\eta(e))\nu_0(de)+\sum_{i=1}^d \big(\zeta_i -\frac{ \theta_i^2}{2\gamma}  +  F_i(s,\psi(T-s))\big)  g^i_t(s) \Big)ds , \\[1mm]
			\Lambda_t^i &=&  \sigma^v_i\varsigma^i(t) \psi^i(T-t) \sqrt{V^i_t}, \quad U_t(e) = \psi^\top(T-t) \eta(e),\quad  i=1,\ldots,d, \quad 0 \leq t \leq T, 
		\end{array}  
		\right.
	\end{equation}
	where $g$ $=$ $(g^1,\ldots,g^d)^\T$ is given by \eqref{eq:processg} i.e. the $\R^d$-valued process \((g_t(s))_{t\leq s}\)  is defined in~\eqref{eq:processg}.  Then,  for some $p > 1$,  $\left(Y, \Lambda, U \right)$  is a  $\mathbb{S}^{p}_{\F}([0,T], \R) \times L^2_{\F}([0,T], \R^d) \times L^2_{\F}([0, T],\tilde{N})$-valued solution to the Riccati BSDEJ~\eqref{eq:bsdeExpo}
	\begin{equation}\label{eq:bsdeExpo}
		\left\{
		\begin{array}{ccl}
			dY_t &=&  \big(\frac{1}{2\gamma} \left| \lambda_t + \gamma \Sigma \Lambda_t \right|^2 -\frac{\gamma}2|\Lambda_t|^2 - \int_{E}h_{\gamma}(U_t(e))\xi(V_t, de)\big)dt + \Lambda_t^\top dW_t + \int_{E}U_t(e)\tilde{N}(dt,de). \\[1mm]
			Y_T &=&  \int_0^T \zeta V_s ds,\quad h_{\gamma}(x)=\frac{e^{\gamma x}-\gamma x-1}{\gamma},\quad \xi(V_t, de) = \nu_0(de) + \sum_{i=1}^{d} V_t^i \, \nu_i(de)
		\end{array}  
		\right.
	\end{equation}
\end{Proposition}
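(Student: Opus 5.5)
The plan is to differentiate the explicit candidate $Y$ in \eqref{eq:SolbsdeExpo} in $t$, using the semimartingale dynamics of the forward process $g_t(s)$ from \eqref{eq:processg}. We then identify the drift with the generator of \eqref{eq:bsdeExpo} by means of the Riccati--Volterra equation \eqref{eq:RiccatiExpTilpsi1}, and finally check the integrability classes.

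\emph{Dynamics of $g$.} For fixed $s$, \eqref{eq:processg} gives
\[
dg_t(s)=K(s-t)\,dZ_t, \qquad t<s,
\]
with $g_s(s)=V_s$. Write $G^i(s):=\zeta_i-\frac{\theta_i^2}{2\gamma}+F_i(s,\psi(T-s))$ and $c(s):=\int_E h_\gamma(\psi(T-s)^\top\eta(e))\nu_0(de)$. Then
\[
Y_t=\int_0^t\zeta V_u\,du+\int_t^T\big(c(s)+G(s)^\top g_t(s)\big)\,ds .
\]
A stochastic Fubini theorem, which covers both the Brownian and the compensated Poisson integrals, justifies differentiating under the integral. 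The moment bound \eqref{eq:moments V1}, the condition $K_i\in L^2$ and the square-integrability of $\eta$ in Assumption~\ref{assump:kernelJumpVolterra}(ii) provide the required integrability. We obtain
\[
dY_t=\Big(\zeta V_t-c(t)-G(t)^\top V_t\Big)dt+\Big(\int_t^T G(s)^\top K(s-t)\,ds\Big)dZ_t .
\]
Changing variables $s\mapsto T-s$ in \eqref{eq:RiccatiExpTilpsi1} shows that
\[
\int_t^T K_i(s-t)\,G^i(s)\,ds=\psi^i(T-t).
\]
Hence
\[
dY_t=\big(\zeta V_t-c(t)-G(t)^\top V_t+\psi(T-t)^\top DV_t\big)dt+\Lambda_t^\top dW_t+\int_E U_t(e)\,\tilde N(dt,de),
\]
with $\Lambda$ and $U$ exactly as in \eqref{eq:LambdaU}. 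The terminal condition is immediate, since the $ds$-integral vanishes at $t=T$.

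\emph{Identification of the drift.} Substituting $F_i$, the $(D^\top\psi)_i$ term cancels against $\psi^\top DV$. The remaining drift is
\[
\sum_i V^i_t\Big(\frac{\theta_i^2}{2\gamma}+\theta_i\rho_i\sigma^v_i\varsigma^i\psi^i-\frac{\gamma}{2}(\sigma^v_i)^2(1-\rho_i^2)(\varsigma^i\psi^i)^2\Big)-\int_E h_\gamma(U_t)\,\xi(V_t,de).
\]
On the other hand, $\Sigma\Lambda$ has components $\rho_i\Lambda^i$ and $\lambda^i=\theta_i\sqrt{V^i}$, so
\[
\frac{1}{2\gamma}|\lambda+\gamma\Sigma\Lambda|^2-\frac{\gamma}{2}|\Lambda|^2=\sum_i\Big(\frac{\theta_i^2V^i}{2\gamma}+\theta_i\rho_i\sqrt{V^i}\,\Lambda^i-\frac{\gamma}{2}(1-\rho_i^2)(\Lambda^i)^2\Big).
\]
With $\Lambda^i=\sigma^v_i\varsigma^i\psi^i\sqrt{V^i}$ the two expressions agree. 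The $\nu_0$ part and the $V^i\nu_i$ parts of the jump term reassemble into $\int_E h_\gamma\,\xi(V_t,de)$. This is pure algebra.

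\emph{Integrability.} Since $\psi$ is continuous on $[0,T]$ and $\varsigma$ is bounded, we have $|\Lambda_t|^2\le C|V_t|$. Thus $\Lambda\in L^2_{\F}$ by \eqref{eq:moments V1}. Similarly,
\[
\int_E U_t^2\,\xi(V_t,de)\le C\Big(1+\sum_i V^i_t\Big)\sum_k\int_E|\eta|^2\,d\nu_k ,
\]
which gives $U\in L^2_{\F}([0,T],\tilde N)$. For $Y\in\mathbb S^p$ we need $\E\sup_t|g_t(s)|^p$ uniformly in $s$ and $\E\big(\int_0^T|V|\,du\big)^p$. The latter follows from \eqref{eq:moments V1}. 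For the former, we bound the coefficients $G$ and $c$ using the continuity of $\psi$ and the exponential moment \eqref{eq:cond_U} to control $h_\gamma(\psi^\top\eta)$ in $\nu_k$-integrals. For the supremum, we use the BSDE representation just obtained: $Y$ equals the drift integral plus the martingale terms. We then apply the Burkholder--Davis--Gundy inequality, for jump martingales in the form of the Bichteler--Jacod inequalities, with $p$ taken large enough that the bounds of Assumption~\ref{assm:gen} apply.

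The main obstacle is the rigorous justification of the stochastic Fubini interchange for the Poisson part, together with this $\mathbb S^p$ estimate. The interchange requires $\int_0^T\!\int_0^s\!\int_E K(s-u)^2\eta^2\,\xi(V_u,de)\,du\,ds<\infty$ in expectation, which follows from the linear growth \eqref{eq:LinearGrowth}. Everything else is computation.
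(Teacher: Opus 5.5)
Your derivation of the dynamics of $Y$ and your treatment of $\Lambda$ and $U$ coincide with the paper's proof. That derivation consists of stochastic Fubini for $t\mapsto g_t(s)$, your identity $\int_t^T K_i(s-t)\,G^i(s)\,ds=\psi^i(T-t)$ read off from \eqref{eq:RiccatiExpTilpsi1} by the change of variables $s\mapsto T-s$, and the algebraic matching of the drift with the generator of \eqref{eq:bsdeExpo}. Your proof is correct. The genuine difference lies in the $\mathbb{S}^p$ estimate for $Y$.

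\textbf{The paper's route.} It works with the BSDE in conditional-expectation form, in four steps. It uses the sign information $Y_t\le 0$, taken from the claim that $\exp(\gamma Y)\le 1$. It then dominates $Y_t$ by the martingale $M_t=\E\big[Y_T+\frac{1}{2\gamma}\int_0^T|\lambda_s+\gamma\Sigma\Lambda_s|^2ds \mid \mathcal F_t\big]$. Next it shows $\E[e^{|M_T|}]<\infty$ through the Novikov-type bound \eqref{eq:assumption_novikov} and the Laplace formula of Theorem~\ref{Thm:VolSqrtAll}. Finally it concludes with Doob's maximal inequality.

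\textbf{Your route.} You estimate the semimartingale decomposition of $Y$ directly. Once $\sup_t\int_E h_\gamma(U_t(e))\,\nu_k(de)<\infty$, which \eqref{eq:cond_U} provides, the drift is bounded by $C(1+|V_t|)$. Burkholder--Davis--Gundy type inequalities then handle the Brownian and compensated Poisson integrals, so only the polynomial moments \eqref{eq:moments V1} of $V$ are needed. Taking $p=2$, Doob's $L^2$ inequality together with the square integrability of $\eta$ already suffices. So $p$ need not be tied to the one in Assumption~\ref{assm:gen}, and the detour through $\E\sup_t|g_t(s)|^p$ is unnecessary.

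\textbf{Comparison.} Your route is more elementary and does not depend on any sign of $Y$, which is a real advantage here. The terminal value $Y_T=\int_0^T\zeta V_s\,ds$ is positive whenever all $\zeta_i>0$, and the constraint $\zeta_i\le\theta_i^2/(2\gamma)$ allows this. So $Y\le 0$ is not automatic. Moreover, a one-sided bound $Y_t\le M_t$ does not by itself control $|Y_t|$. The paper's route, in turn, rests on the structure of the generator rather than on the explicit formula for $Y$, in the spirit of a priori estimates for quadratic BSDEs. It is also phrased in the exponential-integrability framework that is reused in the verification argument of Theorem~\ref{Thm:ExpoUtilityGeneral}.
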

\medskip
\noindent{\bf Remarks:} 1. In a one–dimensional example $d=1$, 
together with $\varsigma\equiv I$,~\cite[section~$5$]{BondiLivieriPulido2024} establishes that, there exists a continuous global solution $\psi \in C(\R_+,\R_-)$ to  the Ricatti--Volterra Equation ~\eqref{eq:RiccatiExpTilpsi1}--~\eqref{eq:RiccatiExpTilpsi2}.

\smallskip
\noindent 
2. For our case studies, in Section~\ref{Sec:Num}, we take $\nu_i\equiv0$ for every $i=1,\cdots,d$. Consequently, as $\zeta_i-\frac{\theta_i^2}{2\gamma}<0$ for \(i=1,\ldots,d,\) and $K$ satisfies the Assumption~\ref{assump:kernelJumpVolterra}, \cite[Theorem A.2.]{Gnabeyeu2026a} provides the existence of a unique global continuous solution $\psi \in C([0,T],\R^d)$ to  the inhomogeneous Ricatti--Volterra Equation ~\eqref{eq:RiccatiExpTilpsi1}--~\eqref{eq:RiccatiExpTilpsi2} such that \(\psi^i(t)< 0\) for every \(t>0\) and \(i=1,\cdots,d.\)\\

\smallskip
\noindent 3. Furthermore, note as in~\cite{Gnabeyeu2026b} that, if the matrix $D$ in the drift of the volatility process is a diagonal matrix, i.e. $D=-\diag{(\lambda_1,\dots, \lambda_d)}$, for \(i=1,\ldots,d,\) by \cite[Theorem A.2.~$(c)$]{Gnabeyeu2026a}, since $\zeta_i-\frac{ \theta_i^2}{2\gamma}<0$, $\psi^i \in C([0,T],\R_-)$ is unique global solution to the following Volterra equation  (here, for short $\bar{D}_i(s):=\lambda_i-\theta_i \rho_i \sigma^v_i \varsigma^i(T-s)$).
\begin{equation}\label{eq:comp}
	\chi(t) = \int_0^t K_i(t-s)  \Big( \zeta_i-\frac{ \theta_i^2}{2\gamma} - \bar{D}_i(s) \chi(s)  + \gamma\frac {(\sigma^v_i)^2} 2  ( 1- \rho_i^2)\varsigma^i(T-s)^2\chi(s) ^2\Big)ds, \; t\leq T.
\end{equation}

\smallskip
\noindent It follows in this case (still following~\cite{Gnabeyeu2026b}) that the condition \eqref{eq:condtheta} can be made more explicit by bounding $\psi$ with respect to the vector $\theta$.
Indeed setting for \(i=1,\ldots,d,\) \(\bar{\lambda}_i:=\inf_{t\in [0,T]} \big(\lambda_i+\sigma^v_i\rho_i\theta_i\varsigma^i(t)\big)= \lambda_i+\sigma^v_i\rho_i\theta_i \|\varsigma^i\|_\infty {\bold{1}_{\rho_i \leq 0}}, \) (assuming boundedness for the function \(\varsigma\)) and assuming that \(\bar{\lambda}_i \neq 0\), by \cite[Corollary A.3.]{Gnabeyeu2026a}, we have:
\begin{equation}
	\sup_{t \in [0,T]} |\psi^i(t)| \leq \frac{| 2\gamma \zeta_i-\theta_i^2 |}{2\gamma\bar{\lambda}_i}\int_0^T f_{\bar{\lambda}_i}(s)ds = \frac{\theta_i^2 -2\gamma \zeta_i}{2\gamma\bar{\lambda}_i}(1- R_{\bar{\lambda}_i}(T))\leq \frac{\theta_i^2 -2\gamma \zeta_i}{2\gamma\bar{\lambda}_i}, \;\; i=1,\ldots,d.
\end{equation}
where $R_{\bar{\lambda}_i}$ is the \textit{ $\bar{\lambda}_i$-resolvent} associated to the real-valued kernel $K_i$ and $f_{\bar{\lambda}_i}$ its antiderivative  (see e.g.,~\cite[Section 2.2]{EGnabeyeu2025}).
Consequently, combining those component-wise estimates, we finally obtain that, a sufficient condition on $\theta$ to ensure \eqref{eq:condtheta} would be for all $i=1,\ldots,d$ 
\begin{equation}
	\theta^2_i+\big(\sigma^v_i\|\varsigma^i\|_{\infty,T}\frac{(\theta_i^2-2\gamma \zeta_i)}{2\gamma\bar{\lambda}_i}\big)^2\big(1- R_{\bar{\lambda}_i}(T)\big)^2 + e^{\frac{(\theta_i^2-2\gamma \zeta_i)^2}{4\gamma^2\bar{\lambda}^2_i} \big(1- R_{\bar{\lambda}_i}(T)\big)^2}\int_E 
\Big(1 + e^{\|\eta(e)\|^2}\Big)
\, \nu_i(de) \leq \frac{a}{a(p)}. 
\end{equation}
\noindent The main result we derive in this case, pertaining to the optimal strategy and the initial value function, is as follows:
\begin{Theorem}\label{Thm:ExpoUtilityGeneral}
	Assume there exists a unique continuous solution \(\psi\) to the inhomogeneous Ricatti-Volterra equation with L\'evy exponential jump compensator~\eqref{eq:RiccatiExpTilpsi1}-~\eqref{eq:RiccatiExpTilpsi2}
	on the interval \( [0, T] \), such that Assumption~\ref{assm:gen} is in force.
	Then, an optimal investment strategy $(\alpha_t^*)_{t\in [0,T]}$ for the Merton portfolio problem~\eqref{Expobj} is given by 
	\begin{align}
		\label{Eq:alpha_GeneralExpo*}
		\c^*_t &=  e^{-\int_{t}^{T} r(s)ds} \big( \frac{\lambda_t}{\gamma} + \Sigma \Lambda_t \big) =  \Big(\frac1\gamma  e^{-\int_{t}^{T} r(s)ds}  \big(\theta_i + \gamma \rho_i\sigma^v_i\varsigma^i(t) \psi^i(T-t) \big) \sqrt{V_t^i}   \Big)_{1 \leq i \leq d}, \quad 0 \leq t \leq T.
	\end{align}
	Moreover, 
	\begin{equation}\label{eq:boundExpo}
		\sup_{\tau \in [0, T]}\E\Big[ \exp\left(- p\gamma e^{ \int^T_\tau r(u) du} X^{\alpha^*}_\tau\right) \Big] < \infty, \text{ for some } p > 1.
	\end{equation}
	and $\alpha^*$ is admissible. The value function defined in~\eqref{Expobj} can be written as (\(\Theta_0:=-\frac{1}{\gamma}\exp\big(-\gamma e^{\int_{0}^{T} r(s)ds}x_0\big)\) for short)
	\begin{equation}\label{eq:valExpo}
		\mathcal{V}^\Xi(x_0,V_0)=\Theta_0\exp\Big( \gamma \int_0^T \big( \int_{E}h_{\gamma}\big(\psi(T-s)^\top\eta(e)\big)\nu_0(de)+\sum_{i=1}^d \big(\zeta_i -\frac{ \theta_i^2}{2\gamma}  +  F_i(s,\psi(T-s))\big)  g^i_0(s) \Big)ds \Big).
	\end{equation}
\end{Theorem}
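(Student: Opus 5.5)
The plan is to apply the martingale optimality principle of Definition~\ref{def:Martopt} to the family $J^\alpha$ in \eqref{eq:ansatzExpo}. The triplet $(Y,\Lambda,U)$ is the solution of the Riccati BSDEJ \eqref{eq:bsdeExpo} provided by Proposition~\ref{prop:ExpoExp_riccati}.

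\emph{Structure of $J^\alpha$.} Since $Y_T=\int_0^T\zeta V_s\,ds=\Xi$, \eqref{eq:LogUtil} gives $J^\alpha_T=U(X^\alpha_T-\Xi)$. Also $J^\alpha_0=\Theta_0 e^{\gamma Y_0}$ with $Y_0$ deterministic, so $J^\alpha_0$ does not depend on $\alpha$. Evaluating \eqref{eq:SolbsdeExpo} at $t=0$ already produces the right-hand side of \eqref{eq:valExpo}.

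\emph{Multiplicative decomposition.} Write $\tilde\alpha_s:=e^{\int_s^T r(u)du}\alpha_s$ and $W=\Sigma^\top B+\bar\Sigma B^\perp$, where $\bar\Sigma:=\diag(\sqrt{1-\rho_i^2})$. Apply It\^o's formula for jump processes to $\exp(-\gamma\int_0^t\tilde\alpha_s^\top(dB_s+\lambda_s ds)+\gamma Y_t)$. This yields
\[
J^\alpha_t=J^\alpha_0\,\mathcal{E}(M^\alpha)_t\,\exp\Big(\int_0^t A^\alpha_s\,ds\Big),
\]
with the following ingredients:
\begin{itemize}
\item $M^\alpha=\int(-\gamma\tilde\alpha+\gamma\Sigma\Lambda)^\top dB+\int\gamma(\bar\Sigma\Lambda)^\top dB^\perp+\int_E(e^{\gamma U}-1)\,d\tilde N$;
\item the jump compensator term, obtained from $\int_E(e^{\gamma U}-1-\gamma U)\,\xi(V_s,de)=\gamma\int_E h_\gamma(U)\,\xi(V_s,de)$, which cancels exactly against the corresponding term in the driver of \eqref{eq:bsdeExpo};
\item the drift, which after completing the square becomes
\[
A^\alpha_s=\frac{\gamma^2}{2}\big|\tilde\alpha_s-\tfrac{\lambda_s}{\gamma}-\Sigma\Lambda_s\big|^2\cdot(-1)\cdot\frac{1}{\ldots},
\]
that is, $A^\alpha_s=-\frac{\gamma^2}{2}\big|\tilde\alpha_s-\big(\frac{\lambda_s}{\gamma}+\Sigma\Lambda_s\big)\big|^2\le 0$. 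Here the driver $\frac1{2\gamma}|\lambda+\gamma\Sigma\Lambda|^2-\frac\gamma2|\Lambda|^2$ is designed precisely so that the remaining terms cancel; one uses $|\Sigma\Lambda|^2+|\bar\Sigma\Lambda|^2=|\Lambda|^2$.
\end{itemize}
Since $e^{\gamma U}-1>-1$, $\mathcal{E}(M^\alpha)$ is a positive local martingale. Because $J^\alpha_0<0$, the process $J^\alpha$ is $J^\alpha_0$ times a positive local martingale times a nondecreasing factor.

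\emph{Supermartingale property for every $\alpha\in\mathcal A$.} Localize along stopping times $\tau_n$ that reduce $\mathcal E(M^\alpha)$. Then $J^\alpha_{t\wedge\tau_n}$ is a supermartingale, because $-\mathcal E(M^\alpha)\exp(\int A)$ is a negative local martingale multiplied by the increasing factor $\exp(-\int A)$. To pass to the limit $n\to\infty$ we need $J^\alpha$ of class D. This follows from the admissibility condition in \eqref{eq:AdmStrat2} together with $e^{\gamma Y_t}\in L^{p}$ uniformly, obtained from $Y\in\mathbb S^p$ of Proposition~\ref{prop:ExpoExp_riccati} and a H\"older argument; more precisely one uses exponential moments of $\sup_t|Y_t|$, which come from \eqref{eq:assumption_novikov} and the fact that $Y_t$ is affine in $\int_0^t V$ and $g_t$.

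\emph{Optimality of $\alpha^*$.} For $\alpha^*$ as in \eqref{Eq:alpha_GeneralExpo*} we have $A^{\alpha^*}\equiv0$, so $J^{\alpha^*}=J_0\,\mathcal E(M^{\alpha^*})$. The main obstacle is to show that this stochastic exponential, which has both Brownian and jump parts and unbounded integrands of size $\sqrt V$, is a true martingale, and to establish the bound \eqref{eq:boundExpo}, which gives admissibility. I would proceed by writing
\[
\mathcal E(M^{\alpha^*})^p=\mathcal E(pM^{\alpha^*}\text{-type martingale})\times\exp\Big(\int_0^t c_p(s)\,ds\Big),
\]
where $c_p$ is bounded by a combination of $|\lambda|^2+|\Lambda|^2$ and $\int_E(e^{p\gamma|U|}+1)\,\xi(V,de)$, and hence by $a(p)\big(|\lambda|^2+|\Lambda|^2+\beta^\top V\big)$ plus a deterministic term coming from $\nu_0$. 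Cauchy--Schwarz then separates a stochastic exponential, which is a supermartingale with expectation at most $1$, from an exponential moment that is finite by \eqref{eq:assumption_novikov} and \eqref{eq:cond_U}. This gives a uniform $L^p$ bound with $p>1$, hence uniform integrability and the true martingale property; the argument is in the spirit of Lemma~5.1 and the Lépingle--Mémin criterion for jump exponentials. The same estimate, combined with $X^{\alpha^*}$ expressed through $-\gamma e^{\int_\tau^T r}X^{\alpha^*}_\tau=\log(J^{\alpha^*}_\tau/\Theta_0)-\gamma Y_\tau$ and H\"older's inequality against $e^{-p\gamma Y}$, yields \eqref{eq:boundExpo} and the class D property.

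The chain of inequalities following Definition~\ref{def:Martopt} then concludes the proof.
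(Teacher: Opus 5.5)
Your architecture is the paper's: the ansatz \eqref{eq:ansatzExpo}, It\^o--L\'evy, cancellation of the $h_\gamma$-compensator against the driver, pointwise optimization of a quadratic drift, localization, and martingality of $\mathcal{E}(M^{\alpha^*})$. However, the passage $n\to\infty$ in your supermartingale step fails as written. Membership in $\mathcal{A}$ (see \eqref{eq:AdmStrat2}) gives only uniform integrability of $\{\exp(-\gamma e^{\int_\tau^T r(u)du}X^\alpha_\tau)\}_\tau$. It does not give boundedness in any $L^q$ with $q>1$, so H\"older cannot pair it with moments of $e^{\gamma\sup_t|Y_t|}$, however many you have. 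For example, on $[0,1]$ take $A_n=\mathbf{1}_{[0,\epsilon_n]}/(\epsilon_n\log(1/\epsilon_n))$, which is uniformly integrable since $\E[A_n]\to0$, and $B(x)=\log(1/x)$, which even has exponential moments of order $<1$. Yet $\E[A_nB]\to1$ while $A_nB$ is supported on $[0,\epsilon_n]$. Fatou does not help either, because $J^\alpha\le0$ puts the inequality in the wrong direction.

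What is needed is an $L^\infty$ bound on $e^{\gamma Y}$. The paper uses $0<e^{\gamma Y}\le1$ (claim 2 of the Remarks after the theorem), so that $|J^\alpha_\tau|\le\frac1\gamma\exp(-\gamma e^{\int_\tau^T r(u)du}X^\alpha_\tau)$ and class D transfers at once. That bound comes from writing $e^{\gamma Y_t}$ as a conditional expectation of $e^{\gamma\Xi}$ times a factor in $(0,1]$. The expectation is taken under the measure with density $\mathcal{E}\big(\int\gamma\Lambda_s^\top dW_s+\int\!\int_E(e^{\gamma U_s(e)}-1)\widetilde N(ds,de)\big)$. So it holds for $\zeta\le0$. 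If some $\zeta_i>0$, then $e^{\gamma Y_T}=e^{\gamma\Xi}$ is unbounded, and this caveat affects the paper's argument as well. In that case the natural repair is to require in $\mathcal{A}$ that $\sup_\tau\E[\exp(-q\gamma e^{\int_\tau^T r(u)du}X^\alpha_\tau)]<\infty$ for some $q>1$; $\alpha^*$ satisfies this by \eqref{eq:boundExpo}. With that requirement, your H\"older argument using exponential moments of $\sup_t|Y_t|$ (obtainable from \eqref{eq:Condprocessg}, Jensen and Doob) does go through.

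There are two smaller points. First, the sign of your drift is wrong. Completing the square gives
\begin{equation*}
A^\alpha_s=\frac{\gamma^2}{2}\Big|\tilde\alpha_s-\Big(\frac{\lambda_s}{\gamma}+\Sigma\Lambda_s\Big)\Big|^2\ \ge 0,
\end{equation*}
which is the paper's $D_t(\alpha)$. Only with this sign is $e^{\int_0^tA^\alpha_sds}$ nondecreasing, so that $J^\alpha=J^\alpha_0\,e^{\int A^\alpha}\mathcal{E}(M^\alpha)$ is a local supermartingale. Your text mixes $A\le0$, a \emph{nondecreasing factor} and $\exp(-\int A)$.

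Second, for $\alpha^*$ your route genuinely differs from the paper's. The paper gets martingality of $\mathcal{E}(M^{\alpha^*})$ from its Novikov-type lemma for jump exponentials (start of Section~\ref{sect:proofMresult}). For \eqref{eq:boundExpo} it cites the continuous-path argument of \cite{Gnabeyeu2026b}. Your factorization $\mathcal{E}(M)^p=\mathcal{E}(\widetilde M)^{1/2}e^{\int c_p}$ plus Cauchy--Schwarz gives a uniform $L^p$ bound over stopping times. That yields martingality and \eqref{eq:boundExpo} at once, including the jump contribution the cited argument does not cover. The price is that $c_p$ contains $\frac12\int_E(e^{2p\gamma U_s(e)}-1)\,\xi(V_s,de)$. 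So the $\beta_i$ in \eqref{eq:condtheta} must be built with $e^{2p\gamma\sup_t|U_t(e)|}$ in place of $e^{2\sup_t|U_t(e)|}$. Condition \eqref{eq:cond_U} permits this for large $p$, but it is not literally what Assumption~\ref{assm:gen} states.
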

\medskip
\noindent{\bf Remarks:} 1. It is worth noting that, if \(\eta\equiv0\), we recover the results of~\cite[section 3.2]{Gnabeyeu2026b}
where \(\Gamma:=\exp\big(\gamma Y\big)\) provided we defined \(\widetilde{\psi}:=\gamma \psi\).

2. Equation~\eqref{eq:ansatzExpo} can be reformulated as \(J^\alpha_t := -\frac{1}{\gamma}\exp(-\gamma  e^{\int_{t}^{T} r(s)ds}X^\alpha_t+ \gamma Y_t)\) and one checks with It\^o-L\'evy's Lemma together with martingale representation (see~\cite[Proof of Proposition 3.11]{Gnabeyeu2026b} that $0<\Gamma_t=\exp\big(\gamma Y_t\big)\leq1$, $\P-a.s.$, which yields that the uniform integrability of \(J^\alpha\) reduces to that of the family in~\eqref{eq:AdmStrat2} 

\medskip
\noindent We are now able to compute the indifference price for a derivative $\Xi$. The indifference buy price (bid) is the price $p^b$ at which the investor is
indifferent (in the sense that his expected utility under optimal trading is unchanged)
between paying nothing and not having the claim $\Xi$ and paying $p^b$ now to receive
the claim $\Xi$ at time $T$. That is, the investor is willing to pay at most the amount $p^b$ today for one unit of
the claim $\Xi$ at time $T$. Mathematically speaking, $p^b$  satisfy the equation 
\begin{equation*}
    \mathcal{V}^{-\Xi} ( x_0 - p^b, V_0) = \mathcal{V}^0 (x_0, V_0).
\end{equation*}
We define in a symetric way the indifference selling price.\\

\begin{Proposition}
The indifference buying price $p^b$ of one unit of $\Xi$ is explicitly given by
\begin{align*}
p^b
&= e^{-\int_0^T r(s)\,ds}
\Bigg[
\int_0^T \int_E
\Big(
h_{\gamma}\big(\psi_0(T-s)^\top \eta(e)\big)
- h_{\gamma}\big(\psi(T-s)^\top \eta(e)\big)
\Big)\,\nu_0(de)\,ds \\
&\quad
+ \sum_{i=1}^d \int_0^T
\big(
F_i(s, \psi_0(T-s)) - F_i(s, \psi(T-s)) + \zeta_i
\big)\, g_0^i(s)\, ds
\Bigg].
\end{align*}
Here, $\psi_0, \psi$ solves the Riccati--Volterra equation
\begin{align*} 
\psi_0^i(t)
&= \int_0^t K_i(t-s)
\left(
    -\frac{\theta_i^2}{2} + F_i(T-s, \psi_0(s))
\right)\, ds, \\
\psi^i(t)
&= \int_0^t K_i(t-s)
\left(
    -\zeta_i - \frac{\theta_i^2}{2} + F_i(T-s, \psi(s))
\right)\, ds,
\quad i = 1, \ldots, d.
\end{align*}
and $F_i$ defined in Equation~\ref{eq:RiccatiExpTilpsi2} of Proposition~\ref{prop:ExpoExp_riccati}.
\end{Proposition}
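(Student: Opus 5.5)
The plan is to read off the indifference price directly from the closed-form value function \eqref{eq:valExpo} of Theorem~\ref{Thm:ExpoUtilityGeneral}, applied twice: once without the claim and once with the claim $-\Xi$.

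\textbf{Step 1 (no claim).} For $\mathcal{V}^0(x_0,V_0)$ we take $\zeta=0$. Theorem~\ref{Thm:ExpoUtilityGeneral} then gives
\[
\mathcal{V}^0(x_0,V_0)=-\tfrac1\gamma\exp\big(-\gamma e^{\int_0^T r}x_0\big)\exp\big(\gamma A_0\big),
\]
where $A_0$ is the time integral in \eqref{eq:valExpo} built from the solution $\psi_0$ of the Riccati--Volterra equation with $\zeta=0$.

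\textbf{Step 2 (claim $-\Xi$).} Receiving the claim means the terminal wealth becomes $X_T^\alpha+\Xi$, i.e.\ $U(X_T^\alpha-(-\Xi))$. This corresponds to the BSDEJ \eqref{eq:GammaDef2} with terminal value $-\Xi$, i.e.\ with $\zeta$ replaced by $-\zeta$. This explains the $-\zeta_i$ appearing in the stated equation for $\psi$. The constraint $-\zeta_i\le \theta_i^2/(2\gamma)$ holds trivially for $\zeta\ge0$, and in general we simply assume Assumption~\ref{assm:gen} for both $\psi_0$ and $\psi$. Applying \eqref{eq:valExpo} with initial capital $x_0-p^b$ gives
\[
\mathcal{V}^{-\Xi}(x_0-p^b,V_0)=-\tfrac1\gamma\exp\big(-\gamma e^{\int_0^T r}(x_0-p^b)\big)\exp\big(\gamma A\big),
\]
where $A$ is the integral in \eqref{eq:valExpo} with $\psi$ and $-\zeta_i$ in place of $\zeta_i$. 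A key point is that $g_0$ does not depend on $\zeta$, $\psi$ or $x_0$, since $g_0(s)=\varphi(s)V_0+\int_0^s K(s-u)\mu(u)\,du$.

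\textbf{Step 3 (equating and solving).} Setting the two values equal, the factor $-\frac1\gamma e^{-\gamma e^{\int_0^T r}x_0}$ cancels. Taking logarithms leaves
\[
e^{\int_0^T r}\,p^b = A_0 - A,
\]
so $p^b=e^{-\int_0^T r}(A_0-A)$. Expanding the difference, the $\nu_0$-terms give $\int_0^T\!\int_E\big(h_\gamma(\psi_0^\top\eta)-h_\gamma(\psi^\top\eta)\big)\nu_0\,ds$. In the $g_0^i$-terms the $\theta_i^2$ contributions cancel, leaving $F_i(s,\psi_0(T-s))-F_i(s,\psi(T-s))+\zeta_i$. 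This is exactly the stated formula, and uniqueness of $p^b$ follows from strict monotonicity of the exponential.

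\textbf{Main obstacle.} The computation itself is routine; the care is in the bookkeeping of normalisations. The statement writes $-\theta_i^2/2$ where \eqref{eq:RiccatiExpTilpsi1} has $-\theta_i^2/(2\gamma)$, and sign conventions on $\zeta$ must be tracked consistently. I would check that the $\theta$-terms appear identically in both Riccati equations so that they cancel in the difference regardless of normalisation, which is all the formula needs. Beyond that, one must verify that Theorem~\ref{Thm:ExpoUtilityGeneral} legitimately applies to both problems, i.e.\ that Assumption~\ref{assm:gen} holds for $\psi_0$ and for $\psi$.
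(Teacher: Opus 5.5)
Your proposal is correct and matches the paper's proof: substitute the closed-form value \eqref{eq:valExpo} once with $\zeta=0$ and once with $\zeta\mapsto-\zeta$ at capital $x_0-p^b$, equate, and take logarithms, using that $g_0$ does not depend on the claim. One correction to your closing remark: the $\theta$-normalisation is not harmless, because it determines $\psi_0$ and $\psi$ themselves. The formula therefore holds only when the statement's $-\theta_i^2/2$ is read as $-\theta_i^2/(2\gamma)$, as in \eqref{eq:RiccatiExpTilpsi1}; the paper's own proof also uses $-\theta_i^2/(2\gamma)$.
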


\noindent {\bf Proof: }
The value function $\mathcal{V}^0(x_0, V_0)$ has the form 
\begin{align*}
    \Theta_0\exp\Big( \gamma \int_0^T \big( \int_{E}h_{\gamma}\big(\psi_0(T-s)^\top\eta(e)\big)\nu_0(de)+\sum_{i=1}^d \big( -\frac{ \theta_i^2}{2\gamma}  +  F_i(s,\psi(T-s))\big)  g^i_0(s) \Big)ds \Big).
\end{align*}
Equating $\mathcal{V}^0(x_0,V_0)$ and $\mathcal{V}^{-\Xi}(x_0-p^b,V_0)$ we find $p^b$.

\subsection{Optimal strategy for the power utility maximization problem}\label{subsect:Power}
\noindent Here, with a slightly different formulation, we denote by  $\pi_t$ the proportion of wealth invested in the risky assets \(S\).
An agent invests at any time $t$ the proportion $\pi_t$ of his wealth $X^\pi_t$ in the stocks \(S\)
of price's vector $S_t$, and the remaining proportion $1-\pi_t^\top {\bold{1}_d}$ in a bond of price $S_t^0$
with interest rate $r(t)$. 
Assuming the model~\eqref{eq:stocks}--~\eqref{eq:hestonS} for $S_t$, the dynamics of the controlled wealth process is given by
\begin{align*}
	\mathrm d X^\pi_t
	&= X^\pi_t \bigl(  \pi_t^\top \big( r(t) {\bold{1}_d} + \sigma(V_t) \lambda_t  \big) + (1 - \pi_t^\top {\bold{1}_d})r(t) \bigr)\,\mathrm dt + X^\pi_t \pi_t^\top\sigma(V_t)  \,\mathrm d B_t \\
	&= X^\pi_t \bigl(r(t) + \pi_t^\top  \sigma(V_t) \lambda_t\bigr)\,\mathrm dt + X^\pi_t \pi_t^\top   \sigma(V_t) \,\mathrm d B_t.
\end{align*}
where ${\theta}=(\theta_1,\dots,\theta_d)^\top$.
\medskip
\noindent From now, we let $\alpha_t := \sigma(V_t)^\top \pi_t$ be the investment strategy. Then, under a fixed portfolio strategy, the wealth process $X^\alpha_t$, controlled by $\alpha$ is governed by:
\begin{equation}\label{Eq:wealth}
	d X^\alpha_t = \big(r(t) + \alpha_t^\top \lambda_t \big)  X^\alpha_t dt + \alpha_t^\top X^\alpha_t dB_{t}, \; X_0 = x_0 > 0.
\end{equation}
By solving the linear SDE \eqref{Eq:wealth}, the wealth process admits the explicit representation
\begin{equation}\label{eq:wealthPowerSol}
	X_t^\alpha
	=
	x_0 \exp\!\left(
	\int_0^t \Big(r(s) + \alpha_s^\top \lambda_s - \tfrac12 \left|\alpha_s\right|^2\Big)\,ds
	+
	\int_0^t \alpha_s^\top \, dB_{s}
	\right), \quad x_0\geq0.
\end{equation}
By a solution to~\eqref{Eq:wealth}, we mean an $\F$-adapted process $X^{\alpha}$ satisfying~\eqref{Eq:wealth} on $[0,T]$  with $\P$-a.s. continuous sample paths and such that 
\begin{align}
	\label{eq:estimateXPower}
	\E\big[\sup_{t\leq T} |X^{\alpha}_t|^p \big] &< \;  \infty\quad \text{for some }\quad p>1.
\end{align}

	The set of all admissible investment strategies is denoted as $\cA$ and is naturally defined by
	\begin{equation}\label{eq:AdmStrat}\mathcal A  = \left\{ \alpha \in {L^{2,loc}_{\F}([0,T], \R^d)} \mbox{ such that \eqref{Eq:wealth} has a  solution satisfying } ~\eqref{eq:estimateXPower}. 
	\right\}\end{equation}

\noindent We want to solve the Merton power utility optimization problem, i.e. our aim is to find the value function $\mathcal{V}(x_0,V_0)$ for the CRRA utility function such that
\begin{equation}\label{obj_power}
	\mathcal{V}(x_0,V_0)=	\sup_{ \alpha(\cdot) \in \cA }\E_{x_0,V_0}[U(X_T^{\alpha})]
\end{equation}
where $\E_{x_0,V_0}$ is the conditional expectation given $x_0$ and $V_0$ and $U$ the power utility function \(U(x):=\frac{1}{\gamma}x^\gamma\). The parameter $\gamma \in (0, 1)$ represents the relative
risk aversion of the investor. Smaller $\gamma$ correspond to higher risk aversion. 
A strategy $\alpha^*$ for which the supremum is attained is called an optimal strategy.

\noindent The solution method is similar to the exponential utility case. With slightly abuse of notations, we still use $\psi(\cdot)$, $Y$, etc. However, they are redefined and not mixed with counterparts in exponential utility case.

\medskip
\noindent From the dynamic of the controlled wealth process given in Equation~\eqref{eq:wealthPowerSol} 
we have
	\begin{equation}\label{eq:PowerUtil}
		U(X_T^\alpha):= \frac{1}{\gamma}(X_T^\alpha)^\gamma
		=
		\frac{1}{\gamma}(x_0)^\gamma \exp\!\left(
	\int_0^T \gamma\Big(r(s) + \alpha_s^\top \lambda_s - \tfrac12 \left|\alpha_s\right|^2\Big)\,ds
	+
	\int_0^T \gamma\alpha_s^\top \, dB_{s}
	\right).
	\end{equation}
Similar to~\cite{HuImkellerMueller2005},
	we consider the family of stochastic processes \((J^\alpha)^\alpha\)  defined for every \( t\in[0,T]\) by 
    \begin{equation}\label{eq:ansatzPower}
J_t^\alpha:=\frac{x_0^\gamma}{\gamma} \exp\!\left(
	\int_0^t\gamma \Big(r(s) + \alpha_s^\top \lambda_s - \tfrac12 \left|\alpha_s\right|^2\Big)\,ds
	+
	\int_0^t \gamma\alpha_s^\top \, dB_{s} + Y_t
	\right)
    \end{equation}where the triplet  \((Y,\Lambda, U)\) satisfies a backward stochastic differential equation with jumps (BSDEJ) under \(\P\) with a driver \(f: [0,T] \times \R \times \R^d \times \mathbb{R} \to \R\) of the form:
	\begin{equation}\label{eq:GammaDefPower}
		\left\{
		\begin{array}{ccl}
			dY_t &=&  \; -f(t, Y_t ,\Lambda_t, U_t)dt + \Lambda_t^\top dW_t + \int_{E}U_t(e)\tilde{N}(dt,de), \\
			Y_T &=&  0. 
		\end{array}  
		\right.
	\end{equation}
\noindent In these terms, we are bound to choose a function \(f\) for which \(J_t^\alpha\) is a
	supermartingale for all \(\alpha\in \mathcal A\),  there exists a \(\alpha^*\in \mathcal A\) such that \(J_t^{\alpha^*}\) is a martingale. The main result we present in this case is as follows:
\begin{Proposition}
	\label{prop:ExpoPower_riccati} 
	Assume that there exists  a solution $\psi \in C([0,T],\R^d)$ to the inhomogeneous Riccati-Volterra equation with L\'evy exponential jump compensator:
	\begin{align}
		 \psi^i(t)&= \int_0^t K_i(t-s)\big(\frac{\gamma \theta_i^2}{2(1-\gamma)}  + F_i(T-s,\psi(s))\big) ds, \quad i=1,\ldots,d,
		  \label{eq:RiccatiPower1} \\
		F_i(s,\psi) &=  \frac{\gamma}{1-\gamma} \theta_i \rho_i \sigma^v_i \varsigma^i(s) \psi^i + (D^\top \psi)_i + \frac {(\sigma^v_i)^2} 2 \left[  (\varsigma^i(s)\psi^i)^2+\frac{\gamma}{1-\gamma}\rho_i^2(\varsigma^i(s)\psi^i)^2\right] + \int_{E}h(\psi^\top \eta(e))\nu_i(de)  
		\label{eq:RiccatiPower2}
	\end{align} 
	Let $\left(Y, \Lambda, U \right)$ be defined as 
	\begin{equation}\label{eq:SolbsdePower}
		\left\{
		\begin{array}{ccl}
			Y_t &=&  \gamma\int_t^T r(s) ds +  \int_t^T \Big(\int_{E}h\big(\psi(T-s)^\top\eta(e)\big)\nu_0(de)+\sum_{i=1}^d  \big(\frac{\gamma \theta_i^2}{2(1-\gamma)}  + F_i(s,\psi(T-s))\big) g^i_t(s) \Big) ds, \\[1mm]
			\Lambda_t^i &=&  \sigma^v_i\varsigma^i(t) \psi^i(T-t) \sqrt{V^i_t}, \quad U_t(e) = \psi^\top(T-t) \eta(e),\quad  i=1,\ldots,d, \quad 0 \leq t \leq T, 
		\end{array}  
		\right.
	\end{equation}
	where $g$ $=$ $(g^1,\ldots,g^d)^\T$ is given by \eqref{eq:processg} i.e. the $\R^d$-valued process \((g_t(s))_{t\leq s}\)  is defined in~\eqref{eq:processg}.  Then, $\left(Y, \Lambda, U \right)$  is a  $\mathbb{S}^{\infty}_{\F}([0,T], \R_-) \times L^2_{\F}([0,T], \R^d) \times L^2_{\F}([0, T],\tilde{N})$-valued solution to the BSDEJ~\eqref{eq:bsdePower}
	\begin{equation}\label{eq:bsdePower}
		\left\{
		\begin{array}{ccl}
			dY_t &=&  \big(- \gamma r(t)  - \frac{\gamma}{2(1-\gamma)} \left| \lambda_t+ \Sigma \Lambda_t \right|^2 -\frac12|\Lambda_t|^2 - \int_{E}h(U_t(e))\xi(V_t, de)\big)dt + \Lambda_t^\top dW_t + \int_{E}U_t(e)\tilde{N}(dt,de). \\[1mm]
			Y_T &=&  0,\quad h(x)=e^x-x-1,\quad \xi(V_t, de) = \nu_0(de) + \sum_{i=1}^{d} V_t^i \, \nu_i(de)
		\end{array}  
		\right.
	\end{equation}
\end{Proposition}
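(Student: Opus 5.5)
The plan is to compute the semimartingale dynamics of the explicitly given $Y$ in \eqref{eq:SolbsdePower} and match them with \eqref{eq:bsdePower}; the integrability claims are handled separately afterwards. Write $\bar F_i(s):=\frac{\gamma\theta_i^2}{2(1-\gamma)}+F_i(s,\psi(T-s))$ and $h_0(s):=\int_E h(\psi(T-s)^\top\eta(e))\nu_0(de)$, so that
\[
Y_t=\gamma\int_t^T r(s)ds+\int_t^T h_0(s)ds+\sum_{i=1}^d\int_t^T \bar F_i(s)g^i_t(s)ds .
\]

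\textbf{Step 1: stochastic Fubini.} Using \eqref{eq:processg}, $g^i_t(s)=g^i_0(s)+\int_0^t K_i(s-u)dZ^i_u$. Stochastic Fubini (for the $ds$, $dW$ and $\tilde N$ integrals, legitimate by the moment bound \eqref{eq:moments V1}, continuity of $\psi$ and $K_i\in L^2$) gives
\[
\int_t^T \bar F_i(s)g^i_t(s)ds=\int_t^T\bar F_i(s)g^i_0(s)ds+\int_0^t\Big(\int_t^T\bar F_i(s)K_i(s-u)ds\Big)dZ^i_u .
\]
Differentiating in $t$ gives
\[
d\Big(\int_t^T\bar F_i g^i_t\Big)=-\bar F_i(t)\Big(g^i_0(t)+\int_0^tK_i(t-u)dZ^i_u\Big)dt+\Big(\int_t^T\bar F_i(s)K_i(s-t)ds\Big)dZ^i_t .
\]
The bracket in the first term is $V^i_t$ by \eqref{VolSqrt2}. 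For the second term, the substitution $s\mapsto T-s$ together with the Riccati--Volterra equation \eqref{eq:RiccatiPower1} gives $\int_t^T\bar F_i(s)K_i(s-t)ds=\psi^i(T-t)$. Hence
\[
dY_t=\Big(-\gamma r(t)-h_0(t)-\sum_i\bar F_i(t)V^i_t\Big)dt+\sum_i\psi^i(T-t)\,dZ^i_t .
\]

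\textbf{Step 2: matching the driver.} Expanding $dZ^i_t=(DV_t)_idt+\sigma^v_i\varsigma^i(t)\sqrt{V^i_t}dW^i_t+\int_E\eta^i(e)\tilde N(dt,de)$ identifies the martingale part as $\Lambda_t^\top dW_t+\int_E U_t(e)\tilde N(dt,de)$, with $\Lambda,U$ as in \eqref{eq:LambdaU}. The term $\sum_i\psi^i(T-t)(DV_t)_i=\sum_i(D^\top\psi(T-t))_iV^i_t$ cancels against the $(D^\top\psi)_i$ part of $F_i$. What remains is a purely algebraic identity, checked coefficient by coefficient in $V^i_t$. Since $\Sigma$ is diagonal, $(\lambda_t+\Sigma\Lambda_t)_i=(\theta_i+\rho_i\sigma^v_i\varsigma^i\psi^i)\sqrt{V^i_t}$, and $|\Lambda_t|^2=\sum_i(\sigma^v_i\varsigma^i\psi^i)^2V^i_t$. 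Expanding $\frac{\gamma}{2(1-\gamma)}(\theta_i+\rho_i\sigma^v_i\varsigma^i\psi^i)^2+\frac12(\sigma^v_i\varsigma^i\psi^i)^2$ reproduces exactly $\frac{\gamma\theta_i^2}{2(1-\gamma)}$, the cross term $\frac{\gamma}{1-\gamma}\theta_i\rho_i\sigma^v_i\varsigma^i\psi^i$, and the bracketed quadratic in \eqref{eq:RiccatiPower2}. Finally, $\int_E h(U_t)\xi(V_t,de)=h_0(t)+\sum_iV^i_t\int_Eh(\psi^\top\eta)\nu_i(de)$, which accounts for the remaining jump part of $F_i$ and for $h_0$. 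Together with $Y_T=0$ (empty integrals), this yields \eqref{eq:bsdePower}.

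\textbf{Step 3: integrability.} $\Lambda\in L^2_\F$ follows from boundedness of $\psi,\varsigma$ and \eqref{eq:moments V1}. $U\in L^2_\F([0,T],\tilde N)$ follows from Assumption~\ref{assump:kernelJumpVolterra}~(ii), boundedness of $\psi$, finiteness of the $\nu_k$ and the first moment of $V$.

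I expect the main obstacle to be the sign and boundedness of $Y$, namely the $\mathbb S^\infty_\F([0,T],\R_-)$ claim. My first attempt will mirror \cite{Gnabeyeu2026b}: represent $e^{Y_t}$ through the exponential-affine formula of Theorem~\ref{Thm:VolSqrtAll}, or via a change of measure, as a conditional expectation of $\exp\big(\gamma\int_t^Tr\,ds-\int_t^T\ldots\big)$. Sign information on $\psi$ (nonpositivity, as in the remarks following Proposition~\ref{prop:ExpoExp_riccati}) should then give $Y_t\le 0$ up to the deterministic $r$-term. If this does not close, I would settle for $\mathbb S^p_\F$ membership using $\E\sup_t|g_t(s)|^p<\infty$ from the BDG inequality applied to $Z$, and mark the sign claim as requiring the additional assumption $\psi\le0$.
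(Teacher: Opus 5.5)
Your Steps 1--3 are the paper's own proof, and that part is correct. They cover stochastic Fubini on $t\mapsto g_t(s)$ with $g_t(t)=V_t$, the identity $\int_t^T\bar F_i(s)K_i(s-t)\,ds=\psi^i(T-t)$ obtained from \eqref{eq:RiccatiPower1}, the coefficientwise matching of the driver, and the $L^2$ bounds on $\Lambda$ and $U$.

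\textbf{The sign claim is false.} The part you flagged cannot be closed by your plan, and not for lack of a trick: $Y$ is $\R_+$-valued, not $\R_-$-valued. Write \eqref{eq:bsdePower} as $dY_t=-f_t\,dt+dM_t$ with
\[
f_t=\gamma r(t)+\tfrac{\gamma}{2(1-\gamma)}|\lambda_t+\Sigma\Lambda_t|^2+\tfrac12|\Lambda_t|^2+\int_E h(U_t(e))\,\xi(V_t,de)\;\ge\;\gamma r(t),
\]
which holds since $h\ge0$ and $\xi(V_t,\cdot)\ge0$. By your Step~3, $M=\int\Lambda^\top dW+\int\!\int U\,\widetilde N$ is a square-integrable martingale. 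Hence $Y_T=0$ gives $Y_t=\E[\int_t^T f_s\,ds\mid\cF_t]\ge\gamma\int_t^T r(s)\,ds\ge0$ (the paper takes $r>0$).

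\textbf{The exponential transform gives the same.} It\^o's formula yields $de^{Y_t}=e^{Y_{t-}}(-a_t\,dt+dL_t)$, where $a_t=\gamma r(t)+\frac{\gamma}{2(1-\gamma)}|\lambda_t+\Sigma\Lambda_t|^2\ge0$ and $L=\int\Lambda^\top dW+\int\!\int(e^{U}-1)\,\widetilde N$. Under Assumption~\ref{assm:gen}, the stochastic exponential of $L$ is a true martingale by Lemma~\ref{lm: extended_m_EG_lemma} with $g_1\equiv0$, $g_2\equiv1$. Hence $e^{Y_t}=\E[\exp(\int_t^Ta_s\,ds)\mid\cF_t]\ge1$, which is exactly the paper's identity \eqref{eq:Power_ito_gamma}. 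The exponent carries a plus sign, not the minus sign in your sketch.

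\textbf{Sanity check.} Take $d=1$, $\sigma^v=0$, $\eta=0$, $D=0$, $K\equiv1$. Then $\psi(t)=\frac{\gamma\theta^2}{2(1-\gamma)}t$ and $Y_t=\gamma\int_t^Tr(s)\,ds+\frac{\gamma\theta^2}{2(1-\gamma)}\int_t^TV_s\,ds>0$, the classical Merton exponent. This must be so, since the optimal value dominates the pure-bond value $\frac{x_0^\gamma}{\gamma}e^{\gamma\int_0^Tr}$.

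\textbf{Why assuming $\psi\le0$ does not help.} The sign of $Y$ does not depend on $\psi$ at all. Moreover, the remark you borrow concerns the exponential-utility equation, whose forcing $\zeta_i-\theta_i^2/(2\gamma)$ is negative. Here the forcing $\frac{\gamma\theta_i^2}{2(1-\gamma)}$ makes $\psi^i>0$ near $0$ unless $\theta_i=0$.

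\textbf{Where the paper goes wrong.} The paper reaches $Y\le0$ only via the representation $Y_t=\E[Y_T-\int_t^Tf\,ds\mid\cF_t]$. That has the sign reversed for $dY=-f\,dt+dM$, and it contradicts \eqref{eq:Power_ito_gamma}.

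\textbf{On $\mathbb S^\infty_\F$.} Under the paper's definition (a.s.\ finite pathwise supremum), membership is automatic for c\`adl\`ag $Y$. Essential boundedness, by contrast, fails in general, since $Y_t$ is affine in the unbounded $g_t(\cdot)$.

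So the correct statement has $\R_+$ in place of $\R_-$. What is worth proving about $Y$ is $Y_t\ge\gamma\int_t^Tr(s)\,ds$ together with an $\mathbb S^p_\F$ bound such as your BDG fallback. Nothing downstream breaks, because the proof of Theorem~\ref{Thm:powerUtilityGeneral} only uses $J^\alpha\ge0$ and Fatou's lemma.
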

\noindent{\bf Remark:}  For our numerical case studies, in Section~\ref{Sec:Num}, we set $\nu_i\equiv0$ for every $i=1,\cdots,d$.  Fix \(T>0\), since $K$ satisfies Assumption~\ref{assump:kernelJumpVolterra},
\cite[Theorem A.2.]{Gnabeyeu2026a} guarantees that there exists a unique non-continuable continuous solution \((\psi, T_{\max})\) to  the inhomogeneous Ricatti--Volterra Equation~\eqref{eq:RiccatiPower1}-~\eqref{eq:RiccatiPower2} with $\psi \in C([0,T_{max}),\R^d)$ in the sense that $\psi$ satisfies~\eqref{eq:RiccatiPower1}-~\eqref{eq:RiccatiPower2} on $[0,T_{max})$ with $T_{max} \in (0,T]$ and $\sup_{t<T_{max}}|\psi( t)| = +\infty$, if $T_{max}<T$. 
\begin{note}
1. The exponential and power utility portfolio maximization problems reduce to establishing existence and uniqueness of solutions to the associated BSDEs~\eqref{eq:bsdeExpo}--\eqref{eq:bsdePower}. These are in turn guaranteed by the well-posedness of the corresponding Riccati--Volterra systems~\eqref{eq:RiccatiExpTilpsi1}--\eqref{eq:RiccatiExpTilpsi2} and~\eqref{eq:RiccatiPower1}--\eqref{eq:RiccatiPower2}.

\noindent 2. The terminology “Riccati BSDE with jumps” stems from the fact that, upon removing the jump terms, these BSDEs reduce to the classical Riccati backward stochastic differential equations studied in the literature; see, for example, 
~\cite[Theorem 3.1]{ChiuWong2014}.

\noindent 3. These BSDEJs have a driver or generator which is quadratic in the Brownian stochastic integrand (denoted by \(\Lambda\) here and usually \(z\)) and with an exponential term involving the Poisson stochastic integrand variable (denoted by \(U\) here and usually by \(u\)). 

\end{note}
\noindent The main result we provide for this case for the optimal strategy and the initial value function is the following:
\begin{Theorem}\label{Thm:powerUtilityGeneral}
	Assume there exists a unique continuous solution \(\psi\) to the inhomogeneous Ricatti-Volterra equation with L\'evy exponential jump compensator~\eqref{eq:RiccatiPower1}-~\eqref{eq:RiccatiPower2}
	on the interval \( [0, T_{\text{max}}) \) so that Assumption~\ref{assm:gen} is in force. 
	Then for any $T< T_{\operatorname{max}}$, an optimal investment strategy $(\alpha_t^*)_{t\in [0,T]}$ for the Merton portfolio problem~\eqref{obj_power} is given by 
	\begin{align}
		\label{Eq:alpha_Generalpower*} 
		\alpha^*_t &= \frac{1}{1 - \gamma} \left( \lambda_t +  \Sigma \Lambda_t \right) = \;  \Big(\frac{1}{1 - \gamma}  \big(\theta_i +  \rho_i\sigma^v_i \varsigma^i(t)\psi^i(T-t) \big) \sqrt{V_t^i}   \Big)_{1 \leq i \leq d}, \quad 0 \leq t \leq T.
	\end{align}
	Moreover,
	\begin{equation}\label{eq:boundPower}
		\E\Big[ \sup_{ t \in [0, T]} |X^{\alpha^*}_t|^p \Big] < \infty,\quad \text{for some sufficiently large}\quad p>1.
	\end{equation}
	and $\alpha^*$ is admissible. The value function defined in~\eqref{obj_power} can be written as (\(\Theta_0:=\frac{x_0^\gamma}{\gamma}\exp\big( \int_0^T \gamma r(s)ds\big)\) for short)
	\begin{equation}\label{eq:valPower}
	\mathcal{V}(x_0,V_0)=\Theta_0\exp\Big( \int_0^T  \Big(\int_{E}h\big(\psi(T-s)^\top\eta(e)\big)\nu_0(de) +  \sum_{i=1}^d  \big(\frac{\gamma \theta_i^2}{2(1-\gamma)}  + F_i(s,\psi(T-s))\big) g^i_0(s) \Big) ds \Big).
	\end{equation}
\end{Theorem}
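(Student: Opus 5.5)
We follow the martingale optimality principle of Definition~\ref{def:Martopt}, taking as candidate family the processes $J^\alpha$ of \eqref{eq:ansatzPower}, with $(Y,\Lambda,U)$ the solution of the Riccati BSDEJ~\eqref{eq:bsdePower} supplied by Proposition~\ref{prop:ExpoPower_riccati}.

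\textbf{Step 1: the first two conditions.} Since $Y_T=0$, we have $J^\alpha_T=U(X^\alpha_T)$ by \eqref{eq:PowerUtil}. Moreover $J^\alpha_0=\frac{x_0^\gamma}{\gamma}e^{Y_0}$ does not depend on $\alpha$. Evaluating $Y_0$ from \eqref{eq:SolbsdePower} gives $\mathcal V(x_0,V_0)$ in the form \eqref{eq:valPower}.

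\textbf{Step 2: supermartingale drift.} Write $J^\alpha_t=\frac{x_0^\gamma}{\gamma}\exp(L^\alpha_t)$, where
\[
dL^\alpha_t=\big(\gamma r+\gamma\alpha^\top\lambda-\tfrac{\gamma}{2}|\alpha|^2-f\big)dt+\gamma\alpha^\top dB+\Lambda^\top dW+\int_E U\,\tilde N(dt,de).
\]
Apply It\^o--L\'evy's formula using $d\langle B,W^i\rangle=\Sigma_i\,dt$ and the fact that $\Sigma$ is diagonal. This gives
\[
\frac{dJ^\alpha_t}{J^\alpha_{t-}}=\Big(-\tfrac{\gamma(1-\gamma)}{2}|\alpha_t|^2+\gamma\alpha_t^\top(\lambda_t+\Sigma\Lambda_t)+\gamma r+\tfrac12|\Lambda_t|^2+\int_E h(U_t)\xi(V_t,de)-f\Big)dt+dM^\alpha_t,
\]
where $M^\alpha$ is a local martingale. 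Its jump part is $\int_E(e^{U}-1)\tilde N$, and we use $\Lambda^\top\Sigma^\top\Sigma\Lambda\le|\Lambda|^2$ where needed. Plugging in the driver $f$ read off from \eqref{eq:bsdePower} and completing the square in $\alpha$ turns the drift into
\[
-\tfrac{\gamma(1-\gamma)}{2}\Big|\alpha_t-\tfrac{1}{1-\gamma}(\lambda_t+\Sigma\Lambda_t)\Big|^2\le0,
\]
with equality exactly at $\alpha^*$ of \eqref{Eq:alpha_Generalpower*}. Thus $J^\alpha=J^\alpha_0\,\mathcal E(\cdot)\exp(-\text{nonneg. integral})$ is a nonnegative local supermartingale, hence a true supermartingale by Fatou. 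This covers every $\alpha\in\mathcal A$.

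\textbf{Step 3: $J^{\alpha^*}$ is a true martingale, and the main obstacle.} For $\alpha^*$, $J^{\alpha^*}=J_0\,\mathcal E(N^*)$ with
\[
N^*=\int(\gamma\alpha^*+\Sigma\Lambda)^\top dB+\int\Lambda^\top\sqrt{I-\Sigma^\top\Sigma}\,dB^\perp+\int\!\!\int_E(e^{U}-1)\tilde N.
\]
We must show this stochastic exponential of a mixed Brownian--Poisson local martingale with unbounded (square-root-of-$V$) integrands is a martingale. A Novikov/L\'epingle--M\'emin type criterion would be tried first. All integrands are bounded by multiples of $|\lambda|^2+|\Lambda|^2$, and the jump compensator term $\int_E\big((e^{U}-1)^2+\dots\big)\xi(V_t,de)$ is bounded by $\nu_0(E)$-constants plus $\beta^\top V_t$, using \eqref{eq:cond_U} and \eqref{eq:beta_i}. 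The exponential integrability \eqref{eq:assumption_novikov} then yields the required exponential moment. Rather than Novikov alone, we would use the extension lemma alluded to in Remark~\ref{rm:ass_} (Lemma~\ref{lm: extended_m_EG_lemma}): it bounds $\E[\mathcal E(N^*)_T^{p}]$ by H\"older, splitting $\mathcal E(N^*)^p$ into $\mathcal E(pN^*)$-type factors times $\exp(c_p\int(|\lambda|^2+|\Lambda|^2+\beta^\top V)ds)$. This is exactly where the constants $a(p)$ in \eqref{eq:constap} arise. The delicate part is controlling the jump contribution of $e^{pU}$ uniformly in $t$, and this is the main obstacle.

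\textbf{Step 4: admissibility and conclusion.} The same $L^p$ estimate gives \eqref{eq:boundPower}. Indeed, \eqref{eq:wealthPowerSol} with $\alpha^*$ is an exponential of the same kind, and Doob's inequality (or BDG applied to the stochastic exponential) passes to the supremum. Hence $\alpha^*\in\mathcal A$ and $J^{\alpha^*}$ is a uniformly integrable martingale. The chain of inequalities following Definition~\ref{def:Martopt} then yields optimality and the value \eqref{eq:valPower}.
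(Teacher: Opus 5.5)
Your proposal is correct and follows the paper's proof essentially step for step. It uses the ansatz \eqref{eq:ansatzPower} and It\^o--L\'evy with completion of the square (your drift $-\tfrac{\gamma(1-\gamma)}{2}\bigl|\alpha_t-\tfrac{1}{1-\gamma}(\lambda_t+\Sigma\Lambda_t)\bigr|^2$ is exactly the paper's $D_t(\alpha_t)$), then Fatou on the nonnegative local supermartingale, then Lemma~\ref{lm: extended_m_EG_lemma} with $g_1=\gamma/(1-\gamma)$, $g_2=1$ for the martingale property of $F^{\alpha^*}$. Admissibility comes from an exponential-moment/Doob estimate, which the paper simply imports from \cite[Proof of Theorem~3.7]{Gnabeyeu2026b}.

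One small correction concerns that lemma. The paper proves it via a generalized Novikov criterion for jump processes \cite{protter2008no}, using a Cauchy--Schwarz splitting of the continuous and jump parts, not via an $L^p$ H\"older bound on $\mathcal E(N^*)$. Since you invoke it as a black box, this does not affect your argument.
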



\subsection{Optimal strategy for the logarithm utility maximization problem.}\label{subsect:Log}
 In this section, we assume the model~\eqref{eq:stocks}--~\eqref{eq:hestonS} for $S_t$, that the dynamics of the controlled wealth process is given by equations~\eqref{Eq:wealth}--\eqref{eq:wealthPowerSol} and the utility function is of the form \(U(x):=\log(x)\).
As in the previous section, the investor have not a liability at maturity $T$. Trading strategies and the wealth process are defined as in the power utility case.
The set of all admissible investment strategies is denoted as $\mathcal{A}$ and is naturally defined by
\begin{equation}\label{eq:AdmStrat}
    \mathcal{A} = \left\{ (\alpha_t)_{t \in [0,T]} \in {L^{2}_{\F}([0,T], \R^d)} 
    \right\}. 
\end{equation} 

 The optimization utility problem is given by 
\begin{equation}\label{obj_log}
\begin{aligned}
    \mathcal{V}(x_0, V_0) 
    &= \sup_{\alpha(\cdot) \in \mathcal{A}} 
    \mathbb{E}_{x_0,V_0}\bigl[\log(X_T^{\alpha})\bigr] ,\qquad X_0^{\alpha}:=x_0>0\\
    &= \log(x_0) + \sup_{\alpha(\cdot) \in \mathcal{A}} 
    \mathbb{E}\biggl[\int_0^T \Bigl(r(s) + \alpha_s^\top \lambda_s 
    - \tfrac{1}{2}\left|\alpha_s\right|^2\Bigr)ds
    + \int_0^T \alpha_s^\top\,dB_s\biggr].
\end{aligned}
\end{equation}
As in the previous section, we want to determine a family of process $J^\alpha$ satisfying $J^\alpha_T = \log(X_T^\alpha)$ with an initial value that does not depend on $\alpha$. Moreover \(J_t^\alpha\) is a
supermartingale for all \(\alpha\in \mathcal A\) and there exists a \(\alpha^*\in \mathcal A\) such that \(J_t^{\alpha^*}\) is a martingale. The strategy $\alpha^*$ is an optimal strategy and $J^{\alpha^*}_0$ is the value function of the optimization problem \eqref{obj_log}. We can choose the family of stochastic processes $(J^\alpha)_{\alpha}$, 
defined for every $t \in [0,T]$ by
\begin{equation}\label{eq:ansatzlog}
    J_t^\alpha := \log(x_0) + \int_0^t \Bigl(r(s) + \alpha_s^\top \lambda_s 
    - \tfrac{1}{2}\left|\alpha_s\right|^2\Bigr)ds
    + \int_0^t \alpha_s^\top\,dB_s + Y_t,
\end{equation}
where the triplet $(Y, \Lambda, U)$ satisfies the following backward SDE with jumps (BSDEJ) on $[0,T]$ under $\mathbb{P}$:
\begin{equation}\label{eq:BSDElog}
    \left\{
    \begin{array}{rcl}
        dY_t &=& -\big( r(t) + \frac{1}{2}|\lambda_t|^2\big)\,dt + \Lambda_t^\top\,dW_t 
                 + \int_{E} U_t(e)\,\tilde{N}(dt,de), \\
        Y_T  &=& 0,
    \end{array}
    \right.
\end{equation}
For $t \in [0,T]$, the driver of the BSDEJ \eqref{eq:BSDElog} is $\mathcal{F}_t$-measurable. Classical results on BSDEs with jumps (see for example \cite{oksendal2019applied}) yield the existence and 
uniqueness of a solution $ (Y, \Lambda, U) \in \mathbb{S}^{2}_{\mathbb{F}}([0,T],\mathbb{R}) 
    \times L^2_{\mathbb{F}}([0,T], \mathbb{R}^d) 
    \times L^2_\F([0,T],\tilde{N})$
to \eqref{eq:BSDElog}.

\begin{Theorem}\label{Thm:logUtilityGeneral}
Let \(T>0\) be fixed.
	An optimal investment strategy $(\alpha_t^*)_{t\in [0,T]}$ for the Merton portfolio problem~\eqref{obj_log} is given by 
	\begin{align}
		\label{Eq:alpha_GeneralLog*}
		\c^*_t &= \lambda_t = \;  \Big(\theta_i  \sqrt{V_t^i}   \Big)_{1 \leq i \leq d}, \quad 0 \leq t \leq T.
	\end{align}
	Moreover $\alpha^*$ is admissible and the value function or maximum expected utility defined in~\eqref{obj_log} can be written as
\begin{equation}\label{eq:valueLog} 
    \sup_{\alpha \in \mathcal{A}}
    \mathbb{E}_{x_0,V_0}
    \Bigl[\log\bigl(X_T^{\alpha^*}\bigr)\Bigr]
    = \mathcal{V}(x_0,V_0)
    = \log(x_0) + \int_0^T r(s) ds +  
    \frac{(\theta\odot\theta)^\top}{2} \int_0^T \big(I_d - \int_0^{T-s} R_D(u)  du\big) g_0(s) ds.
\end{equation}
where $\odot$ denote the Hadamard (pointwise or component-wise) product and $R_D$ is the  \textit{resolvent of the second kind associated with the matrix-valued kernel $-K D$}.
\end{Theorem}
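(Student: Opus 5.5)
The plan is to apply the martingale optimality principle of Definition~\ref{def:Martopt} to the family $J^\alpha$ in \eqref{eq:ansatzlog}, with $Y$ the solution of the linear BSDEJ \eqref{eq:BSDElog}. By construction $J^\alpha_T=\log(X^\alpha_T)$ because $Y_T=0$. Likewise $J^\alpha_0=\log(x_0)+Y_0$, which does not depend on $\alpha$ ($Y_0$ is deterministic once $\mathcal F_0$ is trivial given $V_0$).

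Combining \eqref{eq:ansatzlog} with \eqref{eq:BSDElog} gives
\begin{align*}
dJ^\alpha_t = \Big(\alpha_t^\top\lambda_t-\tfrac12|\alpha_t|^2-\tfrac12|\lambda_t|^2\Big)dt + \alpha_t^\top dB_t+\Lambda_t^\top dW_t+\int_E U_t(e)\tilde N(dt,de).
\end{align*}
The drift equals $-\tfrac12|\alpha_t-\lambda_t|^2\le 0$, and it vanishes exactly when $\alpha=\lambda$. The stochastic integrals are true martingales. For the $dB$ term this holds because $\alpha\in L^2_{\F}$. For the $\Lambda$ and $U$ terms it holds because $(Y,\Lambda,U)\in\mathbb S^2\times L^2\times L^2(\tilde N)$. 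Consequently $J^\alpha$ is a supermartingale for every $\alpha\in\mathcal A$ and a martingale for $\alpha^*=\lambda$. Admissibility of $\alpha^*$ amounts to $\E\int_0^T|\lambda_s|^2ds=\sum_i\theta_i^2\int_0^T\E[V^i_s]ds<\infty$, which follows from the moment bound \eqref{eq:moments V1}. This proves optimality of \eqref{Eq:alpha_GeneralLog*}, and $\mathcal V(x_0,V_0)=\log x_0+Y_0$.

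To identify $Y_0$, take expectations in the BSDEJ:
\begin{align*}
Y_0=\int_0^T r(s)\,ds+\tfrac12\sum_i\theta_i^2\int_0^T\E[V^i_s]\,ds.
\end{align*}
It then remains to compute $m(s):=\E[V_s]$. Taking expectations in \eqref{VolSqrt2} kills both martingale terms, the Brownian one and the compensated Poisson one. The martingale property of these stochastic Volterra integrals follows from the moment bounds together with Assumption~\ref{assump:kernelJumpVolterra}(ii). Note that $g_0$ may be random through $V_0$; we condition on $V_0$ throughout, since the statement uses $\E_{x_0,V_0}$. This yields the linear Volterra equation
\begin{align*}
m(s)=g_0(s)+\int_0^s K(s-u)Dm(u)\,du.
\end{align*}
Its solution is given by the variation of constants formula with the resolvent $R_D$ of $-KD$, namely $m=g_0-R_D*g_0$, i.e. $m(s)=g_0(s)-\int_0^s R_D(s-u)g_0(u)\,du$. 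Integrating over $[0,T]$ and applying Fubini gives
\begin{align*}
\int_0^T m(s)\,ds=\int_0^T\Big(I_d-\int_0^{T-u}R_D(v)\,dv\Big)g_0(u)\,du.
\end{align*}
Multiplying by $\tfrac12(\theta\odot\theta)^\top$ produces \eqref{eq:valueLog}.

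The main obstacle is twofold. First, the martingale property of the stochastic Volterra integrals must be justified when computing $\E[V_s]$. Second, the sign convention of the resolvent $R_D$ has to be matched carefully (that is, $R_D-K D* R_D$ versus $R_D+KD*R_D$) so that the formula $m=g_0-R_D*g_0$ appears with the stated sign. The other steps are routine.
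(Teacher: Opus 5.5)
Your proposal is correct and follows the paper's proof essentially step by step. It uses the same ansatz $J^\alpha$ built from the BSDEJ \eqref{eq:BSDElog}, and the drift $-\tfrac12|\alpha_t-\lambda_t|^2$ gives the supermartingale and martingale properties. It also computes $\E[V_s]$ the same way, by taking expectations in \eqref{VolSqrt2} and invoking Lemma~\ref{lem:volterra_resolvent}.

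The resolvent sign issue you flag is settled directly by that lemma, since the equation for $\E[V_s]$ is exactly \eqref{eq:volterra_conv}. Your admissibility check via the moment bound \eqref{eq:moments V1} is a legitimate and more self-contained replacement for the paper's appeal to \eqref{eq:assumption_novikov}, which involves a Riccati solution $\psi$ that plays no role in the logarithmic case.
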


\section{Numerical experiments: The case of a two-dimensional rough Heston volatility with jumps}\label{Sec:Num}
	\noindent In this section, we illustrate the results of Section~\ref{sect-MertProblem} by numerically computing the optimal portfolio strategy for a special case of two-dimensional rough Heston model with jump as described in   sections~\ref{sect-prelim} with $\varsigma\equiv I$. We consider a financial market consisting of one risk-free asset and \(d = 2\) risky assets, with an investment horizon of \(T = 1\) year. 	
To model the roughness of the asset price dynamics, we employ an appropriate integration kernel. We choose a fractional kernel of Remark~\ref{rm:Kernels} of the form:
\[
K(t) = \begin{pmatrix}
	\frac{t^{\alpha_1-1}}{\Gamma(\alpha_1)} & 0 \\
	0 & \frac{t^{\alpha_2-1}}{\Gamma(\alpha_2)}
\end{pmatrix}, \quad 0.1 +\frac12=\alpha_1,\, 0.4 +\frac12=\alpha_2 \in \big( \frac{1}{2}, 1 \big).
\]
Here, \( \Gamma(.) \) is the Gamma function, and the parameter \( \alpha \) controls the degree of roughness in the model. 
For the jumps part, we consider $E = \mathbb{R}$, $\eta^1(e) = \eta^2(e) = \eta(e) = \kappa e \mathbf{1}_{\{e > 0\}}$ for some $\kappa >0$ to control variance of the jump size, and let $N(dt, de)$ be a Poisson random measure with compensator $\nu_0(de) = \beta \mathcal{N}(0,1)(de)$, where $\beta > 0$. We set $\nu_i\equiv0$ for every $i=1,\cdots,d$. 
Note that, the model is sufficiently rich to capture several well-known stylized facts of financial markets:

\begin{itemize}
	\item Each asset \(S^i\), \(i=1,2\) exhibits stochastic rough volatility with jumps driven by the process \(V^i\), with different Hurst indices \(\alpha_i\) and an intensity $\beta$.
	We can even assume a corellation between \(B^1\) and \(B^2\) through \(\rho \in (-1,1)\).
	\item Each stock \(S^i\) is correlated with its own volatility process through the parameter \(\rho_i\) to take into
	account the leverage effect.
\end{itemize}
We consider the setting where in Equation~\eqref{VolSqrt_}, the simplified specification
\(\varphi(t) = I_{2\times2}, \; t \ge 0, \) holds almost surely, in which case the \(\R^d-\) valued mean-reverting function \(\mu\) is constant in time, that is, \(\mu(t) = \mu_0 \in \R^2, \; \mbox{for all }t \ge 0\).
	\medskip
	\noindent We consider the following estimates for the model parameters:
	\[
	V_0 = \begin{pmatrix}
		0.01 \\
		0.03
	\end{pmatrix}, \quad
	\mu_0 = \begin{pmatrix}
		2.0 \\
		2.5
	\end{pmatrix},\;
	D = \begin{pmatrix}
		-0.2 & 0 \\
		0 & -0.6
	\end{pmatrix}, \quad
	\Sigma = \begin{pmatrix}
		-0.7 & 0 \\
		0 & -0.55
	\end{pmatrix}, \;
	\theta = \begin{pmatrix}
		0.1 \\
		0.1 
	\end{pmatrix},\;
	\sigma^v = \begin{pmatrix}
	0.4 \\
	0.2
	\end{pmatrix}.
	\]
\medskip
\noindent {\bf Remark:}
	In order to numerically implement the optimal strategy \eqref{Eq:alpha_Generalpower*}--\eqref{Eq:alpha_GeneralExpo*}, one needs to simulate  the non-Markovian process $V$ in Equation~\eqref{VolSqrt_}--~\eqref{VolSqrt2} and to discretize the Riccati-Volterra equation  for $\psi$ in~\eqref{eq:RiccatiPower1}--~\eqref{eq:RiccatiPower2} and~\eqref{eq:RiccatiExpTilpsi1}--~\eqref{eq:RiccatiExpTilpsi2} respectively.\\

\subsection{About the numerical scheme for the Volterra process with jumps and the fractional Riccati equations}
\noindent To simulate the volterra process, we use the $K$-integrated discrete time Euler-Maruyama scheme defined below on the time grid $t_k =t^n_k =\frac{kT}{n}, k=0, \dots, n$ and in the fractional kernel case, namely for \(i=1,2\), $\;\overline V^{i,n}_{0}=V^{i,0}$ and for every $k=1,\ldots,n$,  
\begin{align*}\label{eq:EulerXdisc}
	\overline V^{i,n}_{t_{k}} 
	&= V^{i,0} +  \sum_{\ell=1}^{k} \big(\mu_0^i -\lambda_i\overline{V}_{t_{\ell-1}}^{i,n} - \int_E \eta^i(e) \nu_0(de)\big) \int_{t_{\ell-1}}^{t_{\ell}} K_{\alpha_i}(t_{k} -s)ds   \\
    &+  \sum_{\ell=1}^{k} \sigma^v_i\sqrt{ \overline{V}_{t_{\ell-1}}^{i,n}} \int_{t_{\ell-1}}^{t_{\ell}} K_{\alpha_i}(t_{k} -s) \, dW_s + \sum_{\ell=1}^k \int_{t_{\ell-1}}^{t_{\ell}} \int_E K_{\alpha_i}(t_k-s)\eta^i(e) N(ds, de).
\end{align*}

\smallskip
\noindent In the discretization procedure, both deterministic and stochastic integrals must be handled. To this end, for each \(i=1,2\), let us denote by $C^i=(C_{k\ell}^i)_{k,\ell=1:n}$ the $n\times n$ lower triangular matrix involving the deterministic integrals, by $G^i= (G_{k\ell}^i)_{ k=1:n+1, \ell=1: n}$ the  $(n+1)\times n$ matrix  involving the random terms $\int_{t_{\ell-1}}^{t_{\ell}}K_{\alpha_i}(t_{k}-s)dW_s^i$ and $P^i = (P^i_{kl})_{k=1:n+1, \ell = 1:n}$ the $n\times n$ lower triangular matrix involving the jumps part $\int_{t_{\ell-1}}^{t_{\ell}} \int_E K_{\alpha_i}(t_k-s)\eta^i(e) N(ds, de)$.
\begin{align*}
&(C_{k\ell}^i)_{k,\ell=1:n} 
= \left(
\int_{t_{\ell-1}}^{t_\ell} K_{\alpha_i}(t_k - s)\,ds \,\mathbf{1}_{\{1 \le \ell \le k \le n\}}
\right), \,(G_{k\ell}^i)_{ k=1:n+1}^{\ell=1: n}= \left(
\begin{array}{l}
\int_{t_{\ell-1}}^{t_\ell} K_{\alpha_i}(t_k - s)\,dW_s^i \,\mathbf{1}_{\{1 \le \ell \le k \le n\}} \\[0.3em]
\Delta W_{t_\ell}^i, \quad k = n+1,\; \ell=1:n
\end{array}
\right), \\
&\hspace{2cm}(P_{k\ell}^i)_{k,\ell=1:n} 
= \left(\sum_{m: t_{\ell-1}\leq T_m\leq t_{\ell}} K_{\alpha_i}(t_k - T_m)\eta^i(e_m) \,\mathbf{1}_{\{1 \le \ell \le k \le n\}} 
\right).
\end{align*}
with $(T_m)$ a sequence of arrival times such that $(T_m - T_{m-1})$ are i.i.d with exponential law of parameter $\beta$.
The last line in \(G^i\) has been introduced in order to be able to perform a consistent joint simulation of $\bar V^{n}$ and  the Euler-Maruyama scheme of the Markovian 
wealth process $X$ in ~\eqref{eq:wealth} and~\eqref{Eq:wealth} depending on $\bar V^{n}$, $W$ and $N$ (see e.g. \cite{Gnabeyeu2026a} for further details). 

\smallskip
\noindent 
Then the following relation holds for \(i=1,2\): $\;\overline V^{i,n}_{0}=V^{i,0}$ and for every $k=1,\dots,n$
\begin{equation}\label{eq:VectSimul}
\big( \overline V^{i,n}_{t_{k}} \big)_{k=1:n}= V^{i,0}\mbox{\bf 1}_n +C_{k,\cdot}^i\big(\mu_0^i -\lambda_i\overline{V}_{t_{j-1}}^{i,n}- \int_\R \eta^i(e) \nu_0(de)\big)_{j=1:n} +G_{k,\cdot}^i\big( \sigma^v_i\sqrt{ \overline{V}_{t_{j-1}}^{i,n}})_{j=1:n} + P^i_{k,.}\cdot \mbox{\bf 1}_n
\end{equation}
\noindent which is simulated on the discrete  grid \((t^n_k)_{0\leq k\leq n}\) by generating an independent sequence of gaussian vectors \( G^i_{k,\cdot}, k=1 \cdots n\) using an extended and numerically stable Cholesky decomposition of a well-defined covariance matrix \(\bar C\).

\noindent Further details on the simulation of the Gaussian stochastic integrals terms within this semi-integrated Euler scheme introduced in this context for Equation~\eqref{VolSqrt_}--~\eqref{VolSqrt2} can be found in~\cite[Appendix A]{EGnabeyeu2025,GnabeyeuPages2026}. 
Convergence of the scheme, along with explicit convergence rates are beyond the scope of the present paper, but can be established by adapting the results of~\cite{Bonesini2023Volterra} to incorporate jump terms. 
 Extensions to more general kernels and path-dependent coefficients can be obtained similarly, following the approach developed in~\cite{GnabeyeuPages2026}.

\medskip
\noindent 
To numerically solve the two-dimensional Riccati--Volterra system 
\eqref{eq:RiccatiExpTilpsi1}--\eqref{eq:RiccatiExpTilpsi2} and \eqref{eq:RiccatiPower1}--\eqref{eq:RiccatiPower2},  
we employ, as in~\cite{Gnabeyeu2026a, Gnabeyeu2026b} (see also~\cite[Section~5.1]{el2019characteristic}), 
the generalized Adams--Bashforth--Moulton scheme, also known as the \textit{fractional Adams method}. 
This predictor--corrector method, introduced in~\cite{DiethelmFordFreed2002,DiethelmFordFreed2004} 
for fractional ordinary differential equations, comes with proven convergence guarantees 
established in~\cite{LiTao2009}.

\noindent Over a regular or uniform discrete time-grid \((t_k)_{k = 1,\ldots,n}\) with mesh or step length \(\Delta=\frac{T}{n}\) \((t_k = k\Delta)\) for some integer \(n\), let \(y_j^i:=\psi^i(t_j)\), the explicit numerical scheme to estimate \(\psi\) is given by
\[
\begin{cases}
	y_{k+1}^{i,P}
	= \displaystyle \sum_{j=0}^{k} b^i_{j,k+1} f_i(t_j,y_j^i), \;\text{with} \;f_i(t_j,x):=a_i+ F_i(T-t_j,x)
	\\
	y_{k+1}^i
	=
	\displaystyle \sum_{j=0}^{k} a^i_{j,k+1} f_i(t_j,y_j^i)
	+ a^i_{k+1,k+1} f_i(t_{k+1},y_{k+1}^{i,P})
	, \quad y_{0}^i=0
\end{cases},\; i\in\{1,2\}. 
\]
where  \(a_i=\zeta_i-\frac{\theta_i^2}{2\gamma}\) in section~\ref{subsect:Expo} (resp. \(a_i=\frac{\gamma\theta_i^2}{2(1-\gamma)} \) in section~\ref{subsect:Power}), \(F_i\) in~\eqref{eq:RiccatiExpTilpsi2} (resp. in~\eqref{eq:RiccatiPower2}) and the weights \(a^i_{j,k+1}\), \(b^i_{j,k+1}\) are defined as
\[
a^i_{j,k+1} =
\frac{\Delta^{\alpha_i}}{\Gamma(\alpha_i+2)}
\begin{cases}
	k^{\alpha_i+1} - (k-\alpha_i)(k+1)^{\alpha_i}, & \text{if } j=0,\\
	(k-j+2)^{\alpha_i+1} + (k-j)^{\alpha_i+1}
	-2(k-j+1)^{\alpha_i+1}, & 1 \le j \le k,\\
	1, & j = k+1,
\end{cases}
\]
and
\[
b^i_{j,k+1}
= \frac{\Delta^{\alpha_i}}{\Gamma(\alpha_i+1)}
\left((k+1-j)^{\alpha_i} - (k-j)^{\alpha_i}\right),
\qquad 0\leq j\leq k.
\]
Here, \(T\) denotes the terminal time or time horizon, \(n\) the number of time steps, and \(\Delta := \frac{T}{N}\) the time increment.
\noindent Theoretical guarantees for the convergence of this numerical algorithm (the 2-dimensional fractional Adams--Bashforth--Moulton method) are established in~\cite{LiTao2009}.

\subsection{Numerical illustrations}
Assumptions~\ref{assm:gen} are satisfied under the parameter settings specified in this section’s preamble.
    \begin{figure}[H]
	\centering
	\subfloat[$H = 0.1$ and $V^1_0 = 4$]{
		\label{efficient_frontier_small_T}
		\includegraphics[width=70mm]{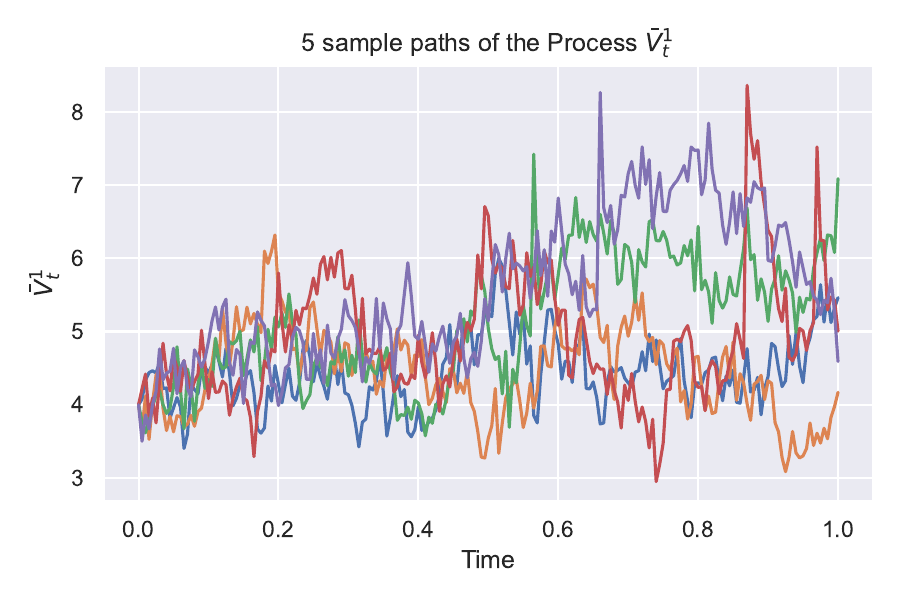}} 
	\subfloat[$H = 0.3$ and $V^2_0 = 4$]{
		\label{efficient_frontier_medium_T}
		\includegraphics[width=70mm]{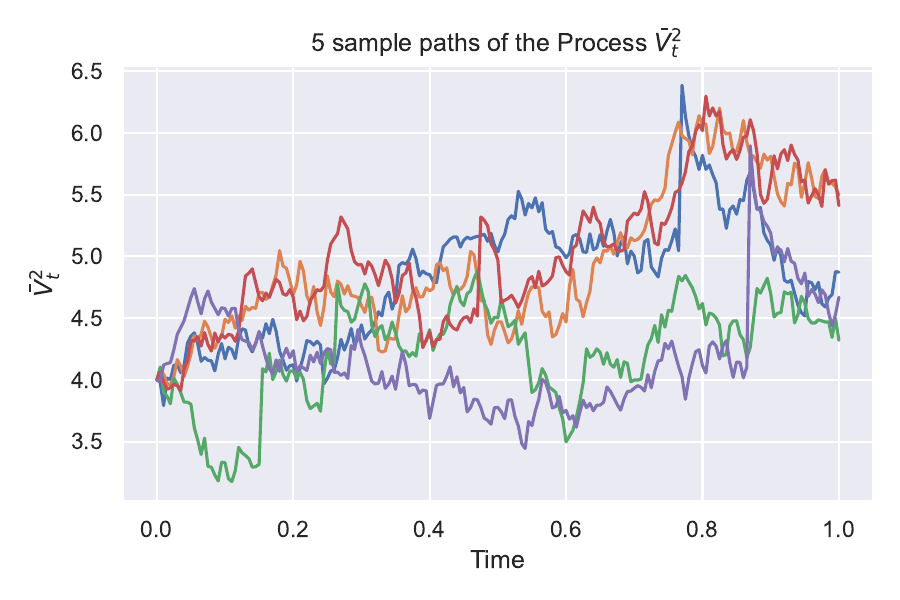} 
	}
	\caption[Variance process.]{\textit{Graph of \(5\) samples paths of the processes  \( t_k \mapsto V^1_{t_k} \) (left) and  \( t_k \mapsto V^2_{t_k} \) (right) over the time interval \( [0, 1] \), for the number of time steps \( n = 200 \).}}
	\label{fig:EfficientFront}
\end{figure}
\smallskip
\noindent Figures \ref{fig:EfficientFront} displays simulated paths of the instantaneous volatility
$V^i_{t_k}$ for different values of the Hurst index $H_i = \alpha_i - \frac{1}{2} \in (0, \frac{1}{2})$. Each upward spike corresponds to a jump arrival time $\tau_k$ of the Poisson measure $N$, at which the volatility is pushed upward by the kernel-weighted mark contribution $K_{\alpha_i}(t - \tau_k)\,\eta(e_k)$. Since $\eta(e) = \kappa e\,\mathbf{1}_{e > 0}$, only positive marks contribute, so jumps act exclusively as upward shocks on $V^i$. 
$H_i$ produces both rougher between-jump paths and slower post-spike decay. This asymmetric behaviour i.e. instantaneous upward shocks followed by gradual relaxation, is consistent with the empirical observation that volatility spikes are almost exclusively upward \cite{whaley2000investor}, reflecting the asymmetric nature of market fear: uncertainty materialises abruptly but dissipates slowly.

\begin{figure}[H]
	\centering
	\includegraphics[width=0.87\linewidth]{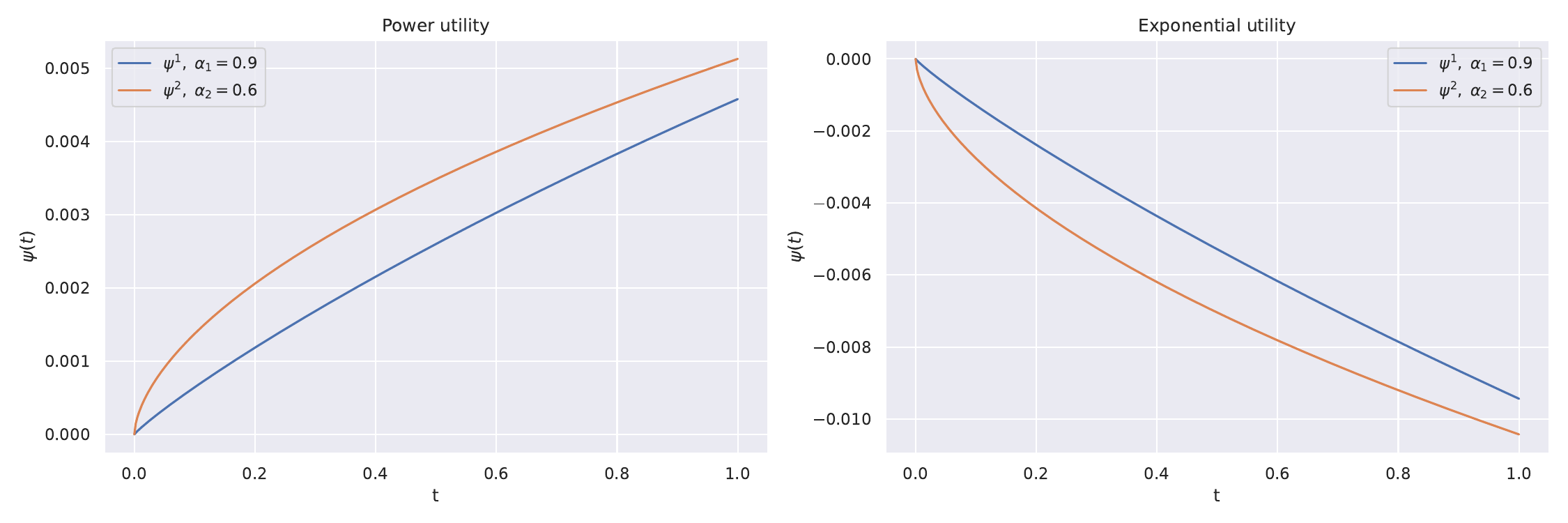}
	\caption{\textit{Graph of \( t_k \mapsto \psi^1_{t_k} \) and \( t_k \mapsto \psi^2_{t_k} \) over \( [0, 1] \) with the fractional Adams algorithm, \(\gamma=0.2\) and the number of time steps \( n = 200 \) both for Power (left) and Exponential (right) utilities functions.}}\label{Fig:Riccati}
\end{figure}

\smallskip
\noindent Figures~\ref{Fig:Riccati} above also confirm the Remark following Proposition~\ref{prop:ExpoExp_riccati}, that is the claim that $\psi \leq 0$ for every \(\rho_i \in (0,1)\) and every \(i\in\{1,2\}\) in the exponential utility case. The lower the Hurst exponent $H$, the more negative $\psi$ becomes.  In the power utility case however, the left panel of Figure~\ref{Fig:Riccati} suggests that, in accordance with~\cite{HanWong2020b,Gnabeyeu2026b}, $\psi(t) $
is positive for $t > 0 $. Moreover, its value becomes larger for a smaller hurst coefficient $H$. 

\smallskip
\noindent Since the optimal strategies $(\alpha^*_t)_{t\in[0,T]}$ given by~\eqref{Eq:alpha_Generalpower*}--\eqref{Eq:alpha_GeneralExpo*} are stochastic processes, we rather consider in Figure~\ref{fig:strategy_gamma}, the evolution of the optimal vector of amounts invested in each stock, 
that is, the associated deterministic mapping \(t \mapsto \pi_t^*,\)
(recall that $\alpha_t^* = \sigma(V_t)^{\top}\pi_t^*$ with $\sigma(V_t) = \sqrt{\mathrm{diag}(V_t)}$, 
and $\alpha^*$ is given by~\eqref{Eq:alpha_Generalpower*}--\eqref{Eq:alpha_GeneralExpo*}). 
\begin{figure}[H]
	\centering
	\begin{subfigure}{0.88\linewidth}
		\centering
		\caption*{(a) $\gamma=0.2$ and $T=1$}
		\includegraphics[width=\linewidth]{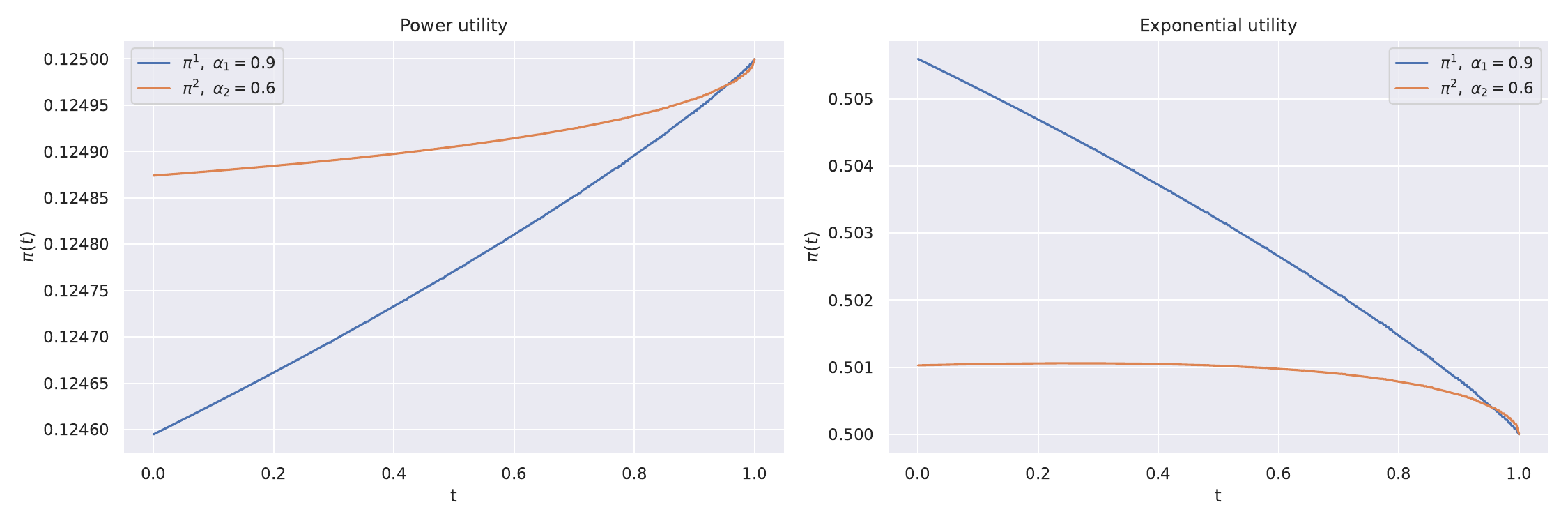}
	\end{subfigure}
	\begin{subfigure}{0.9\linewidth}
		\centering
		\caption*{(b) $\gamma=0.6$ and $T=5$}
		\includegraphics[width=\linewidth]{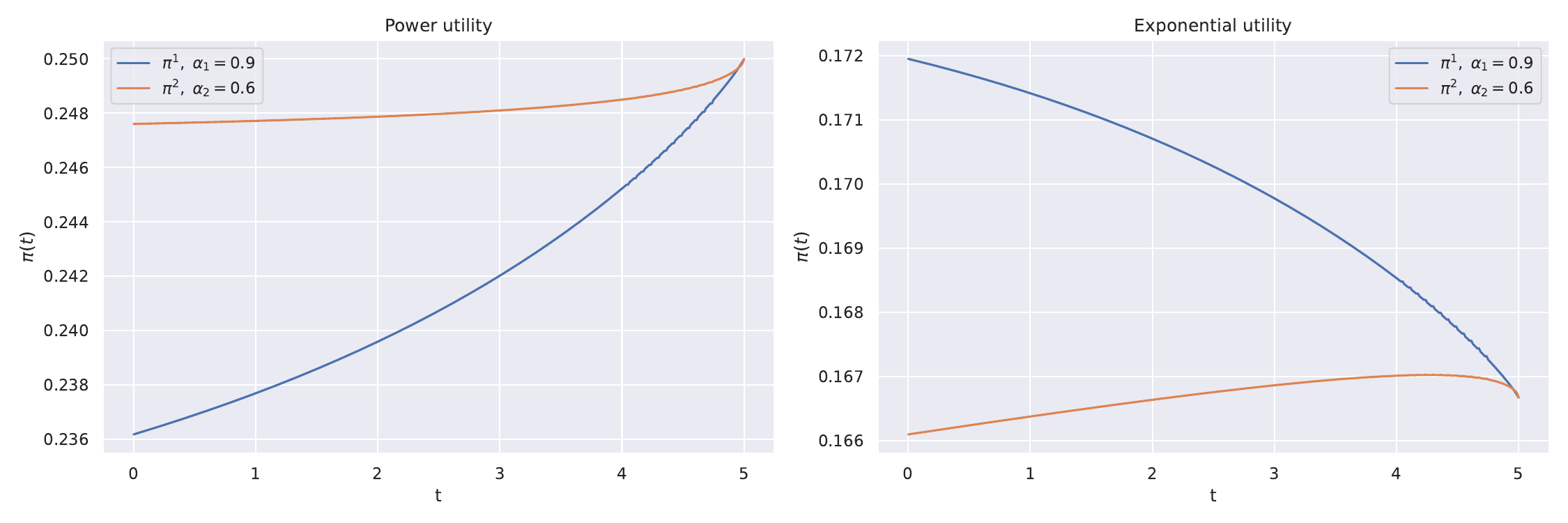}
	\end{subfigure}
	\begin{subfigure}{0.88\linewidth}
		\centering
		\caption*{(c) $\gamma=0.8$ and $T=10$}
		\includegraphics[width=\linewidth]{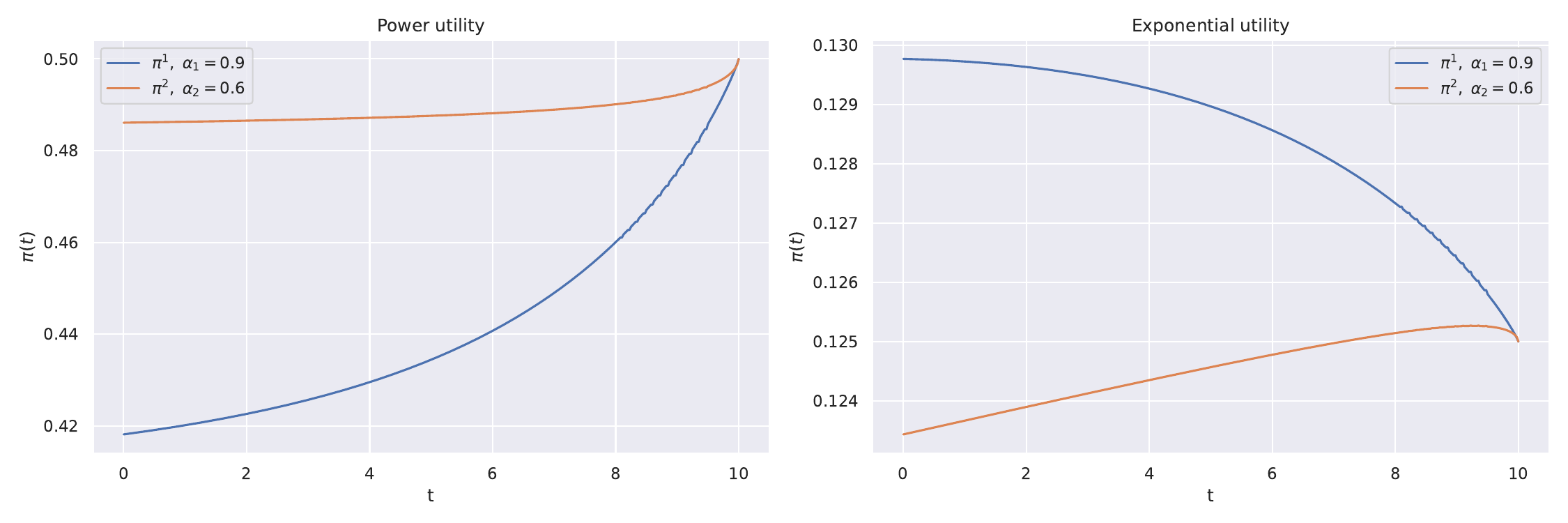}
	\end{subfigure}
	\caption{\textit{Evolution of the optimal portfolio strategy for different levels of the risk aversion parameter $\gamma$ with \(T\in\{1,5, 10\}\) for both the Power (left) and Exponential (right) utilities functions, \( r = 0.02 \).}}
	\label{fig:strategy_gamma}
\end{figure}
\smallskip


\medskip
\noindent Figure~\ref{fig:strategy_gamma} exhibits that if the stock volatility is rougher, the investment demand under power utility is higher. Interestingly,
exponential utility implies complete different opinion about roughness. It suggests investing less under
smaller $H$ (this is to be compared to~\cite{HanWong2020b,AichingerDesmettre2021, Gnabeyeu2026b}.

\smallskip
\noindent Economically, as mentionned in~\cite{HanWong2020b}, these figures show that different forms of risk aversion described by concave utility functions exhibit different
sensitivity in investment demand with respect to the volatility roughness.
More broadly, compared with~\cite{HanWong2020b,AichingerDesmettre2021,Gnabeyeu2026b}, the time-dependent multivariate Volterra integral equations characterising the optimal strategy, together with the optimal value function, are formulated in terms of the scaled compensated exponential function~\eqref{eq:ScaledExpCom_funct}, which naturally arises in the representation of jump cumulants for L\'evy processes.

\section{Proofs of the main results}\label{sect:proofMresult}
\noindent The proofs in this section are an adaptation of the proofs of~\cite[Theorem 3.7 and Theorem 3.12]{Gnabeyeu2026b}.

\smallskip
\noindent {$\rhd$ {\em Preliminaries}. As a first preliminary, we state the following Lemma which is an extension to the jump setting~\eqref{VolSqrt_} of the result from \cite[Lemma~5.1]{Gnabeyeu2026b}. 

	\begin{Lemma}[Martingale property of stochastic exponentials]\label{lm: extended_m_EG_lemma}
		Let $g_1$ and $g_2$ be two deterministic \(\R-\)valued bounded processes such that $g_1,g_2\in L^{\infty}([0,T],\R)$.
		Let $\left(\lambda, \Lambda\right) \in {L^{2,loc}_{\F}([0,T], \R^d)\times L^{2,loc}_{\F}([0,T], \R^d)}$ be two \(\R^d-\)valued stochastic processes, defined in~\eqref{eq:stocks} and~\eqref{eq:LambdaU} respectively. 
		Let $B$, $W$ be two $d$-dimensional Wiener processes and \(\Sigma\) their corellation matrix,
		then under assumption~\ref{assm:gen}, the local martingale
	\begin{align*} Z_t := \mathcal E\Big(\int_0^tg_1(s) (\lambda_s + \Sigma\Lambda_s)^\T dB_s + g_2(s) \Lambda_s^\top dW_s +  \int_0^t\int_{E}\big(e^{ U_s(e)}-1\big)\widetilde{N}(ds,de)\Big) 
	\end{align*} is a true martingale. 
	\end{Lemma}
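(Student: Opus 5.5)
We plan to prove that $Z$ is a true martingale by a criterion of Lépingle--Mémin type for stochastic exponentials of local martingales with jumps. Since this criterion is hard to check directly under the moment condition available here, we instead use the following strategy. First establish a uniform $L^p$ bound for $Z$ along a localizing sequence, via a Hölder splitting that isolates exponential functionals of $\int_0^T V_s\,ds$. Then conclude by uniform integrability.

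Write $Z=\mathcal E(M)$ with $M=M^c+M^d$. Here $M^c_t=\int_0^t g_1(\lambda+\Sigma\Lambda)^\top dB+g_2\Lambda^\top dW$ and $M^d_t=\int_0^t\int_E(e^{U_s(e)}-1)\widetilde N(ds,de)$. The jumps satisfy $\Delta M=e^{U}-1>-1$, so $Z>0$ is a local martingale and hence a supermartingale. It therefore suffices to show $\E[Z_T]=1$. Equivalently, it suffices to show that $(Z_{\tau_n\wedge T})_n$ is uniformly integrable for a localizing sequence $\tau_n$, and for this we prove $\sup_n\E[Z_{\tau_n\wedge T}^{p'}]<\infty$ for some $p'>1$.

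For the explicit form, since $\log(1+\Delta M)=U$,
\[
Z_t=\exp\Big(M^c_t-\tfrac12\langle M^c\rangle_t+\int_0^t\int_E U\,\widetilde N(ds,de)-\int_0^t\int_E h_1(U_s(e))\,\xi(V_s,de)\,ds\Big).
\]
Writing $Z^{p'}$, we add and subtract the compensators needed to produce a new exponential martingale:
\[
Z^{p'}=\mathcal E\big(2p'M^c+\textstyle\int(e^{2p'U}-1)\,d\widetilde N\big)^{1/2}\times\exp\big(\text{drift terms}\big).
\]
The drift terms are of the form $c_1\langle M^c\rangle+\int\!\int \big(\tfrac12 h_1(2p'U)-p'h_1(U)\big)\,\xi\,ds$. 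Cauchy--Schwarz then gives
\[
\E[Z^{p'}_{\tau}]\le \E\big[\mathcal E(\cdot)_\tau\big]^{1/2}\,\E\big[\exp(2\,\text{drift})\big]^{1/2},
\]
and the first factor is at most $1$ by the supermartingale property. For the second factor, note that $\langle M^c\rangle$ is bounded by $C(\|g_1\|_\infty,\|g_2\|_\infty,|\Sigma|)\int_0^T(|\lambda_s|^2+|\Lambda_s|^2)\,ds$, using $d\langle B,W\rangle=\Sigma\,dt$. The jump part is bounded by
\[
\int_0^T\int_E\big(1+e^{2p'\sup_t|U_t(e)|}\big)\,\xi(V_s,de)\,ds \le C\,T+\beta^\top\!\int_0^T V_s\,ds ,
\]
where the constant $C\,T$ comes from $\nu_0$ and \eqref{eq:cond_U}, and the bound uses \eqref{eq:cond_U} with $p$ large together with the definition of $\beta_i$ in \eqref{eq:beta_i}. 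Up to the bookkeeping of constants matching $a(p)$ in \eqref{eq:constap}, the second factor is then controlled by \eqref{eq:assumption_novikov}, i.e.\ by $\E[\exp(a\int_0^T\sum_i V^i_s\,ds)]<\infty$, which follows from Theorem~\ref{Thm:VolSqrtAll}. This bound is uniform in the stopping time, which gives uniform integrability and hence the martingale property.

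\textbf{Main obstacle.} The hardest step is making the jump exponent estimate match the constant $\beta_i$. We need $e^{2p'|U|}$ to be dominated by $1+e^{2\sup|U|}$ when $p'$ is close to $1$, so the Hölder exponents must be chosen carefully. The alternative is to use \eqref{eq:cond_U} with a larger $p$, absorbing the excess into $a(p)$. In the case $U\le0$, the terms $h_1(2p'U)$ and $h_1(U)$ are controlled by $|U|^2$, and the relaxed condition \eqref{eq:Relax_cond_U} suffices, as mentioned in the remark after Assumption~\ref{assm:gen}.
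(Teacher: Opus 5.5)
Your strategy is valid, but it is not the paper's route, and your reason for setting aside the L\'epingle--M\'emin criterion does not hold. The paper applies exactly that criterion, in the form of the Novikov-type theorem for jump processes of Protter--Shimbo. It suffices that $\E\big[\exp\big(\tfrac12\langle M^c\rangle_T+\int_0^T\!\int_E((U_s(e)-1)e^{U_s(e)}+1)\,\xi(V_s,de)\,ds\big)\big]<\infty$. One Cauchy--Schwarz split separates the Brownian part, which is bounded by a multiple of $\int_0^T(|\lambda_s|^2+|\Lambda_s|^2)\,ds$, from the jump part. The elementary bound $(x-1)e^x\le e^{2|x|}$ then turns the jump part into $\int_0^T\!\int_E(1+e^{2\sup_t|U_t(e)|})\,\xi(V_s,de)\,ds$, that is, a deterministic $\nu_0$-term plus $\beta^\top\int_0^T V_s\,ds$. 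The exponent $2$ in \eqref{eq:beta_i} is tailored to this inequality, so the criterion is checked in a few lines with no slack needed. Your route instead gives self-containedness (no external criterion) and a stronger conclusion, $\sup_\tau\E[Z_\tau^{p'}]<\infty$. The price is larger exponential-moment constants and the mismatch you flagged.

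That mismatch is real as written. With Cauchy--Schwarz the jump compensator term is $\tfrac12h_1(2p'U)-p'h_1(U)$, which grows like $\tfrac12e^{2p'U}$. So your bound by $C\,T+\beta^\top\int_0^TV_s\,ds$ is false for $p'>1$, and no choice of $p'$ close to $1$ yields pointwise domination by $e^{2|U|}$. The fix is to use H\"older with a general exponent $q>1$:
$Z^{p'}=\mathcal E\big(qp'M^c+\int\!\!\int(e^{qp'U}-1)\,d\widetilde N\big)^{1/q}\exp\big(\tfrac{qp'^2-p'}{2}\langle M^c\rangle+\int\!\!\int(\tfrac1q h_1(qp'U)-p'h_1(U))\,\xi\,ds\big)$, with $q,p'\in(1,\sqrt2]$ so that $qp'\le 2$.
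Since $h_1(x)\le e^{|x|}$, the positive part of the jump term is then at most $e^{2\sup_t|U_t(e)|}$ pointwise, and $\beta_i$ appears exactly. The conjugate exponent $q/(q-1)$ only inflates the multiples of $\int(|\lambda|^2+|\Lambda|^2)$ and $\beta^\top\int V$, and taking $p$ large in $a(p)$ absorbs this. Alternatively, keep Cauchy--Schwarz and use \eqref{eq:cond_U} with $p>2$ plus dominated convergence to get $\int_E(1+e^{2p'\sup_t|U_t(e)|})\,\nu_i(de)\le 2\beta_i$ for $p'$ close to $1$. Finally, take $\E[\exp(a\int_0^T\sum_iV^i_s\,ds)]<\infty$ directly from Assumption~\ref{assm:gen}. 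Deriving it from Theorem~\ref{Thm:VolSqrtAll} is potentially circular, since that theorem's proof invokes this very lemma.
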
  
	\noindent \emph{Proof:} We apply a generalized version of the Novikov criterion for Lévy processes (see Theorem~8 in~\cite{protter2008no}).
Define
\begin{equation}
M_t := \int_0^t g_1(s)\big(\lambda_s + \Sigma \Lambda_s\big)^\top dB_s
+ \int_0^t g_2(s)\Lambda_s^\top dW_s
+ \int_0^t \int_E \big(e^{U_s(e)} - 1\big)\,\widetilde{N}(ds,de),
\end{equation}
and denote by $M^c$ its continuous part. Then $Z = \mathcal{E}(M)$ is a positive local martingale. It is a true martingale if the Novikov criterion holds, namely,
\begin{align}
\mathbb{E}\!\left[
\exp\!\left(
\frac{1}{2}\langle M^c \rangle_T 
+ \int_0^T \int_E \Big( (U_s(e)-1)e^{U_s(e)} + 1 \Big)\,\xi(V_s,de)\,ds
\right)
\right] < \infty.
\end{align}

By the Cauchy--Schwarz inequality, it is sufficient to verify that
\begin{align}
\mathbb{E}\!\left[\exp\!\left(\langle M^c \rangle_T\right)\right] &< \infty, \\
\mathbb{E}\!\left[\exp\!\left( 
2 \int_0^T \int_E \Big( (U_s(e)-1)e^{U_s(e)} + 1 \Big)\,\xi(V_s,de)\,ds
\right)\right] &< \infty.\label{CS_checking}
\end{align}
	We compute the quadratic variation of $M^c$. Using
	$d\langle B,W\rangle_t=\Sigma dt$, we obtain
	\begin{align*}
		\langle M^c\rangle_t
		&=
		\int_0^t g_1(s)^2|\lambda_s+\Sigma\Lambda_s|^2 ds
		+\int_0^t g_2(s)^2|\Lambda_s|^2 ds 
		+2\int_0^t g_1(s)g_2(s)(\lambda_s+\Sigma\Lambda_s)^\top \Sigma\Lambda_s ds\\
        &\hspace{3cm}
\\
		&\leq   4 C \big(1+|\Sigma|^2\big)\int_0^t \left(|\lambda_s|^2+|\Lambda_s|^2\right)ds 
	\end{align*}
where we use the elementary arithmetic mean-geometric mean (AM-GM) inequality $|ab|\leq (|a|^2+|b|^2)/2 $ and set \(C:=\max (1, \|g_1\|_{\infty,T}, \|g_2\|_{\infty,T})\geq 1\) owing to the boundedness of $g_1,g_2$.
The term $$\E \Big[ \exp\Big({4C \big(1+|\Sigma|^2\big)\int_0^T \Big( |\lambda_s|^2 + |\Lambda_s|^2\Big)  ds}\Big)    \Big]$$ if finite 
thanks to condition~\eqref{eq:assumption_novikov} (since  Assumption~\ref{assm:gen} is in force) with constant \(a(4C) =4C \big(1+|\Sigma|^2\big)\).  It remains to verify the second inequality\eqref{CS_checking}.
For this jump part, using the inequality
\[
(x - 1)e^x \leq e^{2|x|} , \qquad \text{for all } x \in \mathbb{R},
\] combine with Cauchy-Schwarz again it is sufficient to show that 
\begin{align}
\mathbb{E}\!\left[
\exp\!\left(4
\int_0^T \int_E 
\Big(1 + \exp\!\big(2 \sup_{t \in [0,T]} |U_t(e)|\big)\Big)
\, \nu_0(de)\, ds
\right)
\right] 
&< \infty, \\[0.5em]
\mathbb{E}\!\left[
\exp\!\left(
4\int_0^T \sum_{k=1}^d 
\left(
\int_E 
\Big(1 + \exp\!\big(2 \sup_{t \in [0,T]} |U_t(e)|\big)\Big)
\, \nu_k(de)
\right)
V_s^k \, ds
\right)
\right] 
&< \infty.
\end{align}
For the first inequality, since $\psi$ and $\eta$ are deterministic functions, it suffices to ensure the finiteness of the argument inside the exponential. This is always satisfied due to the finiteness of $\nu_0$ together with condition~\eqref{eq:cond_U} of Assumption~\eqref{assm:gen}. 

The second finiteness condition is then equivalent to the finiteness of the Laplace transform of $\beta^\top V$ with \(\beta:=(\beta_1, \ldots,\beta_d)^\top\) given by~\eqref{eq:beta_i}, which holds true thanks to condition~\eqref{eq:assumption_novikov} (since  Assumption~\ref{assm:gen} is in force) with constant \(a(1) =4\).
This complete the proof. \hfill $\Box$
\begin{Remark} In reference to the first claim in remark~\eqref{rm:ass_} on assumption~\ref{assm:gen}
when $U$ is non-positive (equivalently, when $\psi$ is non-positive), condition \eqref{CS_checking} is easier to satisfy. Indeed, for $x\leq0$, one has the bounds $0 \leq (x-1)e^x + 1 \leq |x|, \; x\leq 0.$ Consequently, for every $i=0,\ldots,d$ and $s\in[0,T]$,
\[0 \leq \int_E \bigl(U_s(e)-1)e^{U_s(e)} + 1\bigr)\nu_i(de) \leq \int_E |U_s(e)|\nu_i(de)\leq |\psi(T-s)|\sqrt{\nu_i(E)}
\left(\int_E |\eta(e)|^2\,\nu_i(de)\right)^{1/2}
<\infty.\]
where we have used the Cauchy--Schwarz inequality twice and the fact that $\nu_i(E)<\infty$ by the finiteness of $\nu_i$ and $\int_E\|\eta(e)\|^2\,\nu_i(de)<\infty$ by assumption~\ref{assump:kernelJumpVolterra}~$(ii)$. Moreover, since $\psi$ is continuous on the compact interval $[0,T]$, it is bounded, and hence
\[
\beta_i:=\sup_{t\in[0,T]}\int_E|U_t(e)|\,\nu_i(de)<\infty.
\]
Consequently, letting \(\beta:=(\beta_1, \ldots,\beta_d)^\top\), we conclude likewise owing 
to the finiteness of the Laplace transform of $\beta_0 +\beta^\top V$, which holds true thanks to condition~\eqref{eq:assumption_novikov} (since  Assumption~\ref{assm:gen} is in force) with constant \(a(\frac12) =2\).
\end{Remark}

\noindent As a second preliminary, we recall the following result on Volterra convolution equations and the associated resolvent.
\begin{Lemma}[Volterra integral equation]
\label{lem:volterra_resolvent}
Let $K:\mathbb{R}_+\rightarrow\mathbb{R}^{d\times d}$ be a locally integrable kernel, i.e., $K\in L^1_{\mathrm{loc}}(\mathbb{R}_+;\mathbb{R}^{d\times d})$.
 For a given function $g\in L^1_{\mathrm{loc}}(\mathbb{R}_+;\mathbb{R}^{d})$ and $D\in\mathbb{R}^{d\times d}$ the Volterra convolution equation
\begin{equation}
x(t)=g(t)+\int_0^t K(t-s)Dx(s)\,ds.
\label{eq:volterra_conv}
\end{equation}
(also reading $x= g + K*Dx$)  has a unique solution $x\in L^1_{\mathrm{loc}}(\mathbb{R}_+;\mathbb{R}^{d}),$
given by
\begin{equation}\label{eq:solutionresol}
    \forall\, t\ge 0, \quad x(t) = g(t) - \int_0^t R_{D}(t-s)g(s)ds\; \quad \text{or equivalently,} \quad x= g- R_{D}*g,
\end{equation}
         $R_D\in L^1_{\mathrm{loc}}(\mathbb{R}_+;\mathbb{R}^{d\times d})$ is the resolvent
of the second kind associated with the kernel $-KD$, characterized by the relation ( written in terms of convolution)
\begin{equation}
R_D*KD=KD*R_D =KD+R_D.
\label{eq:resolvent_identity}
\end{equation}
\end{Lemma}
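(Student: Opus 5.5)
The plan is to construct the resolvent $R_D$ directly as a Neumann series in a weighted $L^1$ norm, then verify \eqref{eq:solutionresol} by convolution algebra, and finally obtain uniqueness from a contraction estimate in the same norm. Everything is deterministic and can be done on an arbitrary fixed horizon $[0,T]$, since all objects on $[0,T]$ depend only on their restrictions to $[0,T]$.

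\emph{Step 1 (weighted norm).} For $\sigma>0$ and $f\in L^1([0,T];\R^{d\times d})$ (or $\R^d$-valued) I set
\[
\|f\|_{\sigma}:=\int_0^T e^{-\sigma t}|f(t)|\,dt .
\]
Since $e^{-\sigma t}=e^{-\sigma(t-s)}e^{-\sigma s}$, Fubini gives the Young-type inequality $\|f*h\|_\sigma\le \|f\|_\sigma\|h\|_\sigma$ for convolutions on $[0,T]$, using submultiplicativity of the matrix norm. Because $KD\in L^1([0,T])$, dominated convergence yields $\|KD\|_\sigma\to 0$ as $\sigma\to\infty$. I therefore fix $\sigma$ with $\|KD\|_\sigma\le \tfrac12$. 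This norm is equivalent to the usual $L^1([0,T])$ norm, so convergence in it is ordinary $L^1([0,T])$ convergence.

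\emph{Step 2 (construction of $R_D$).} Define
\[
R_D:=-\sum_{n\ge1}(KD)^{*n},
\]
where $(KD)^{*n}$ denotes the $n$-fold convolution. By Step 1, $\|(KD)^{*n}\|_\sigma\le 2^{-n}$, so the series converges in $L^1([0,T])$. Since $T$ is arbitrary and the partial sums are consistent across horizons, this gives $R_D\in L^1_{\rm loc}(\R_+;\R^{d\times d})$.

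Convolution with $KD$ on either side is continuous in $L^1$, so it can be applied termwise:
\[
R_D*KD=-\sum_{n\ge2}(KD)^{*n}=R_D+KD .
\]
The same computation gives $KD*R_D=R_D+KD$. The key point here is that, although matrix convolution is not commutative in general, all terms of the series are convolution powers of the single kernel $KD$, and these commute with each other. This proves \eqref{eq:resolvent_identity}. Uniqueness of $R_D$ also follows: if $R'$ is another solution, then $R'-R_D=(R'-R_D)*KD$, and the contraction estimate below forces $R'=R_D$.

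\emph{Step 3 (solution formula).} Set $x:=g-R_D*g\in L^1_{\rm loc}$; this is well defined because $L^1_{\rm loc}*L^1_{\rm loc}\subset L^1_{\rm loc}$. Using associativity of convolution (justified by Fubini for $L^1$ functions on $[0,T]$) and \eqref{eq:resolvent_identity}, I compute
\[
KD*x=KD*g-(KD*R_D)*g=KD*g-(KD+R_D)*g=-R_D*g=x-g .
\]
Hence $x=g+KD*x$, which is exactly \eqref{eq:volterra_conv}.

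\emph{Step 4 (uniqueness).} If $x_1,x_2$ are two solutions, their difference $y$ satisfies $y=KD*y$. Then
\[
\|y\|_\sigma\le\|KD\|_\sigma\|y\|_\sigma\le\tfrac12\|y\|_\sigma ,
\]
so $y=0$ a.e. on $[0,T]$. Since $T$ is arbitrary, $y=0$ a.e. on $\R_+$.

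The only genuinely delicate point is the functional-analytic bookkeeping: Young's inequality in the weighted norm, associativity and termwise convolution of the series in $L^1_{\rm loc}$, and the fact that left and right resolvent identities coincide. All of these are settled by Fubini together with the fact that the series involves only powers of $KD$. Alternatively, one could simply cite the standard resolvent theorem (Gripenberg--Londen--Staffans, Thm.~2.3.1), which is exactly this argument.
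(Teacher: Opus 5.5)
Your proof is correct, but it takes a different, self-contained route from the paper. The paper does not argue the lemma directly. It cites Gripenberg--Londen--Staffans: Chapter~2, Theorem~3.1 for the existence of the resolvent $R_D$ of $-KD$ in $L^1_{\mathrm{loc}}$, and Theorem~3.5 for existence, uniqueness and the representation $x=g-R_D*g$.

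You rebuild these results from scratch, and your argument is essentially the standard one behind the cited theorems:
\begin{itemize}
\item the weighted norm $\|\cdot\|_\sigma$ makes convolution with $KD$ a contraction on $L^1([0,T])$;
\item the Neumann series $-\sum_{n\ge1}(KD)^{*n}$ therefore converges;
\item the two-sided identity \eqref{eq:resolvent_identity} holds because every term is a convolution power of the single kernel $KD$;
\item the solution formula and uniqueness then follow by convolution algebra and the same contraction.
\end{itemize}

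The citation buys brevity and the generality of the reference. Your version buys transparency. In particular, it turns the paper's later remark \eqref{eq:NeumExp}, where the Neumann series is stated only formally, into a genuine convergence statement in $L^1([0,T])$ for every $T$.

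A small simplification: once \eqref{eq:resolvent_identity} is available, uniqueness of $x$ also follows purely algebraically. Convolving $x=g+KD*x$ with $R_D$ and using $R_D*KD=KD+R_D$ gives $KD*x=-R_D*g$. Hence $x=g-R_D*g$ for any $L^1_{\mathrm{loc}}$ solution, without a second use of the contraction estimate. Your contraction argument works equally well.
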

\noindent \emph{Proof:} Note that $-KD\in L^1_{\mathrm{loc}}(\mathbb{R}_+;\mathbb{R}^{d\times d})$ guarantees by {\cite[Chapter 2, Theorems 3.1]{gripenberg1990}} that its second kind resolvent function $R_D$ defined in~\eqref{eq:resolvent_identity} always exists.
The existence of a unique solution $x\in L^1_{\mathrm{loc}}(\mathbb{R}_+;\mathbb{R}^{d})$ in~\eqref{eq:solutionresol} follows from {\cite[Chapter 2, Theorems 3.5]{gripenberg1990}}. \hfill $\Box$

\medskip
\noindent {\bf Remarks:} 1. For every $D \!\in \R^{d\times d}$,  the \textit{ resolvent or Solvent core} of the second kind $R_{D}$ associated to the matrix-valued kernel $-KD$, is defined as the unique solution -- if it exists --  to the deterministic matrix-valued Volterra equation~\eqref{eq:resolvent_identity}. Its solution admits the formal \textit{Neumann series expansion}
\begin{equation}\label{eq:NeumExp}
	R_{D} =  - \sum_{k\ge 1} \big( K D\big)^{\star k}.
\end{equation}
where \(\big( K D\big)^{\star k}\) denotes the $k$-fold $\star$ product of $KD$ with itself  with the convention, $\big( K D\big)^{\star 1}= KD$.

2. If $K$ is continuous on $(0,\infty)$, then so is $R_D$. Moreover, if $K\equiv I_d$, then $R_D(t)=e^{Dt}$ and one checks that $R_D\in L^1(\mathbb{R}_+)$
if and only if all eigenvalues of $D$ have strictly negative real parts.

\subsection{Proofs of Proposition~\ref{prop:ExpoExp_riccati} and Theorem~\ref{Thm:ExpoUtilityGeneral}}\label{sect:proofMresult2}
    \noindent {\bf Proof of Proposition~\ref{prop:ExpoExp_riccati}: } We set $Y_t = \sum_{i=1}^d \int_0^t \zeta_i V^i_s ds + \int_t^T \Big(\int_{E}h_{\gamma}\big(\psi(T-s)^\top\eta(e))\nu_0(de)+\sum_{i=1}^d \big(\zeta_i -\frac{ \theta_i^2}{2\gamma}  +  F_i(s,\psi(T-s))\big)  g^i_t(s) \Big)ds$.
	The dynamics of \( Y \) can readily be obtained by recalling \( g_t(s) \) from ~\eqref{eq:processg} and by observing that for fixed \( s \), the dynamics of \( t \to g_t(s) \) are given by \(	dg_t(s) = K(s-t) \, dZ_t \quad t \leq s.\)
	Since \( g_t(t) = V_t \), invoking the stochastic Fubini theorem in \cite[Theorem 2.2]{Veraar2012} (see also, ~\cite[Theorem 65, Chapter IV]{Protter2005}, show that the dynamic of $Y$ reads as: 
	\begin{align*}
		&\, dY_t = \Big(\sum_{i=1}^d \zeta_i V^i_t  - \big(\zeta_i-\frac{ \theta_i^2}{2\gamma}  + F_i(s,\psi(T-s))\big) V^i_t - \int_{E}h_{\gamma}\big(\psi(T-t)^\top\eta(e)\big)\nu_0(de) \Big)dt\\
        &\hspace{1.5cm}+\; \sum_{i=1}^d \int_t^T  {K_i}(s-t) \big(\zeta_i-\frac{ \theta_i^2}{2\gamma} + F_i(s,\psi(T-s))\big) ds dZ^i_t\\
        &= \Big( - \sum_{i=1}^d \Big(-\frac{ \theta_i^2}{2\gamma}  -\theta_i \rho_i \sigma^v_i \varsigma^i(s) \psi^i + \gamma \frac {(\sigma^v_i)^2} 2 (1-\rho^2_i) (\varsigma^i(s)\psi^i)^2 + \int_{E}h_{\gamma}\big(\psi(T-t)^\top\eta(e)\big)\nu_i(de) \Big) V^i_t \\&\hspace{0.5cm}- \int_{E}h_{\gamma}\big(\psi(T-t)^\top\eta(e)\big)\nu_0(de) \Big)dt + \;  \sum_{i=1}^d \psi^i(T-t)\sigma^v_i \varsigma^i(t) \sqrt{V^i_t}dW^i_t +  \sum_{i=1}^d \psi^i(T-t) \int_{E}\eta^i(e)\tilde{N}(dt,de) \\ 
        &= \; \Big(\sum_{i=1}^d \Big(\frac{1}{2\gamma}\left( \theta_i^2 + \gamma\rho_i \sigma^v_i \varsigma^i(t) \psi^i(T-t )\right)^2 - \gamma \frac {(\sigma^v_i)^2} 2 (\varsigma^i(t)\psi^i(T-t))^2  \Big) V^i_t - \int_{E}h_{\gamma}\big(\psi(T-t)^\top\eta(e)\big)\xi(V_t, de) \Big)dt \\
        &\qquad\qquad+   \sum_{i=1}^d \psi^i(T-t)\sigma^v_i \varsigma^i(t) \sqrt{V^i_t}dW^i_t +   \sum_{i=1}^d \psi^i(T-t) \int_{E}\eta^i(e)\tilde{N}(dt,de)
	\end{align*}
	where we changed variables and used the inhomogeneous Riccati--Volterra equation \eqref{eq:RiccatiExpTilpsi1}-\eqref{eq:RiccatiExpTilpsi2} for $\psi$ for the second equality.  Finally, observing that 
	\begin{equation*}
		\left| \lambda_t + \gamma \Sigma \Lambda_t \right|^2  = \;  \sum_{i=1}^d \left( \theta_i^2 + \gamma \rho_i \sigma^v_i \varsigma^i(t) \psi^i(T-t )\right)^2 V^i_t \quad \text{and} \quad \left| \Lambda_t\right|^2  = \; \sum_{i=1}^d (\sigma^v_i)^2 (\varsigma^i(t)\psi^i(T-t))^2  V^i_t
	\end{equation*}
    together with \(Y_T=\zeta\), we get that $(Y, \Lambda, U)$ as defined in ~\eqref{eq:SolbsdeExpo} solves the BSDEJ~\eqref{eq:bsdeExpo}.
{It remains to show that $\left(Y, \Lambda, U\right) \in \mathbb{S}^{p}_{\F}([0,T],\R) \times L^2_{\F}([0,T], \R^d) \times L^2_{\F}([0, T],\tilde{N})$ for some sufficiently large \(p>1\). 
\noindent From the claim~$2.$ of the Remark following Theorem~\ref{Thm:ExpoUtilityGeneral} ($\exp(\gamma Y)$ is bounded), we have that  \( Y_t \leq 0\) for all $ t \in [0, T]$, $\p$-$\as$
Moreover, we have the representation, \(Y_t:=\E \Big[ Y_T - \int_t^T  f\big(Y_s, \Lambda_s, U_s\big) ds \mid \mathcal F_t\Big]\) where \(Y_T=\zeta=:\int_0^T\sum_{i=1}^d \zeta_i V^i_s ds\) and \(f\) in~\eqref{eq:GammaDef2} is defined in~\eqref{eq:bsdeExpo}. Consequently, we have:
\[\forall\, t\in[0,T], \quad Y_t\leq \E \Big[ Y_T +\frac{1}{2\gamma}\int_0^T \left| \lambda_s + \gamma \Sigma \Lambda_s \right|^2 ds \mid \mathcal F_t\Big]:=M_t\]
Using the elementary inequality \((a+b)^p \le 2^{(p-1)^+}\big(a^p + b^p\big)\) for \(a,b>0\), one gets
\begin{equation}\label{eq:boundOpt}
	|\lambda_s +  \gamma \Sigma\Lambda_s|^2 \leq 2(|\lambda_s|^2 +  \gamma ^2|\Sigma\Lambda_s|^2) \leq 2(1 + \gamma ^2|\Sigma|^2)(|\lambda_s|^2 +  |\Lambda_s|^2) \leq 2(1 + \gamma ^2)(1 + |\Sigma|^2)(|\lambda_s|^2 +  |\Lambda_s|^2).
\end{equation}
Consequently, by simple calculation, using the the inequality $|z|^q\leq c_{q}e^{|z|}, \, \forall z \in \R$, together with  Cauchy-Schwarz inequality in the third line:
\begin{align}
	&\,\E \Big[ Y_T + \frac{1}{2\gamma}\int_0^T \left| \lambda_s+ \gamma\Sigma \Lambda_s \right|^2ds \Big] \leq c_{1} \E \big[ \exp\big(|M_T|\big)\big]\label{eq:boundExpoM_T}\\
    &\hspace{1.2cm}\leq c_{1}\E \Big[ \exp\Big(\int_0^T\sum_{i=1}^d |\zeta_i|V^i_s ds+ \frac{(1 + \gamma ^2)}{\gamma}(1 + |\Sigma|^2)\int_0^T \big(|\lambda_s|^2 +  |\Lambda_s|^2\big)ds\Big) \Big]\nonumber\\
    &\hspace{1.2cm}\leq c_{1} \Big(\E \Big[ \exp\Big(2\int_0^T\sum_{i=1}^d |\zeta_i| V^i_s ds\Big) \Big]\Big)^\frac12\Big(\E \Big[ \exp\Big(2\frac{(1 + \gamma ^2)}{\gamma}(1 + |\Sigma|^2)\int_0^T \big(|\lambda_s|^2 +  |\Lambda_s|^2\big)ds\Big) \Big]\Big)^\frac12<\infty\nonumber
\end{align}
thanks to the Novikov-type condition~\eqref{eq:assumption_novikov} with constant \(a(\frac{2(1 + \gamma ^2)}{\gamma}) = \frac{2(1 + \gamma ^2)}{\gamma}(1 + |\Sigma|^2)\) , together with the exponential affine representation~\eqref{eq:laplace} in Theorem~\ref{Thm:VolSqrtAll}.  Therefore, $M_t$ is a martingale under $\P$.  
Let $ p > 1 $.
By {virtue of Doob's maximal inequality} together with the inequality $|z|^q\leq c_{q}e^{|z|}, \, \forall z \in \R$: 
\begin{align*}
	&\, \E \Big[ \sup_{ t \in [0, T]} \big| Y_t \big|^p \Big] \leq \E \Big[ \sup_{ t \in [0, T]} \big| M_t \big|^{p}\Big] \leq \Big(\frac{p}{p-1}\Big)^{p} \E \Big[ |M_T|^p\Big]\leq c_{p} \Big(\frac{p}{p-1}\Big)^{p} \E \big[ \exp\big(|M_T|\big)\big]< \infty
\end{align*}
which is finite owing to ~\eqref{eq:boundExpoM_T}.  Therefore, $\E\big[ \sup_{ t \in [0, T]} |Y_t|^p \big] < \infty $ holds. 

\noindent As for $\Lambda$, it is clear that it belongs to $L^2_{\F}([0,T], \R^d)$ since $\varsigma$ and $\psi$ are bounded, $\psi$ is continuous thus bounded and $\E \Big[\int_0^T  \sum_{i=1}^d V^i_s ds \Big] <  \infty$ by \eqref{eq:moments V1}. \\
Moreover $U \in L^2_{\F}([0, T],\tilde{N})$ as for $\Lambda$ due the the boundary property of $\psi$ and integrability condition in \eqref{assump:kernelJumpVolterra}~$(ii)$.
This complete the Proof.\hfill $\Box$

    \medskip
    \noindent{\bf Proof of Theorem~\ref{Thm:ExpoUtilityGeneral}}
	\noindent {\sc Step~1} (\textit{Proof of the Martingale optimality principle for \(J^\alpha\).})
 	We show that $J_t^{\alpha}$ in~\eqref{eq:ansatzExpo} fulfills the martingale optimality principle. 
	For the first condition, note that $Y_T=\zeta$ and hence $J_T^{\alpha}=-\frac{1}{\gamma}\exp\left(-\gamma  (X^\alpha_T-\zeta)\right) = U(X^\alpha_T)$. Since $Y_0$ is a constant independent of $ \alpha \in \cA$, $J^\alpha_0 = -\frac{1}{\gamma}\exp\left(-\gamma  e^{\int_{0}^{T} r(s)ds}x_0\right) \exp( \gamma Y_0)$ is a constant independent of $ \alpha \in \cA$ and thus the second condition is also satisfied.
	In order to show that the third condition is also fulfilled, we apply Itô's formula on $J_t^{\alpha}$ defined in~\eqref{eq:ansatzExpo}. 
    Let's us consider the Problem~\eqref{Expobj} with an arbitrary strategy $\alpha \in \mathcal{A}$.
	To ease notations, we set \(L_t :=-\gamma  e^{\int_{t}^{T} r(s)ds}X^\alpha_t+ \gamma Y_t\) and $h_t := \lambda_t + \gamma\Sigma \Lambda_t $. Note that \(J^\alpha_t := -\frac{1}{\gamma}\exp(L_t)\) 
    ,we write by It\^o's lemma:
	\begin{align*}
		dL_t &= \Big(\gamma r(s) e^{\int_{t}^{T} r(s)ds}X^\alpha_t - \gamma e^{\int_{t}^{T} r(s)ds}\big( r(t) X^{\alpha}_t  + \alpha_t^\T \lambda_t \big) \Big) dt - \gamma e^{\int_{t}^{T} r(s)ds} \alpha_t^\T dB_t  + \gamma dY_t = \gamma \Lambda_t^\top dW_t \\
		& - \gamma e^{\int_{t}^{T} r(s)ds} \alpha_t^\T dB_t +  \gamma \int_{E}U_t(e)N(dt,de) - \gamma \big(  e^{\int_{t}^{T} r(s)ds} \alpha_t^\T \lambda_t + f(Y_t,\Lambda_t, U_t)+\int_{E}U_t(e)\xi(V_t, de) \big) dt
	\end{align*}
	where \(f\) is the driver in Equation~\eqref{eq:GammaDef2}. Consequently, by It\^o-L\'evy's product rule, combined with the property of \(Y\) in Equation~\eqref{eq:bsdeExpo} of Proposition~\ref{prop:ExpoExp_riccati}, one may write
	\begin{align*}
		dJ_t^{\c} &= \; J_{t_-}^{\c} \big( - \gamma e^{\int_{t}^{T} r(s)ds} \alpha_t^\T \lambda_t - \gamma f(Y_t,\Lambda_t, U_t)- \gamma \int_{E}U_t(e)\xi(V_t, de) + \frac{\gamma^2}{2} e^{\int_{t}^{T} 2r(s)ds} \c^\T_t \c_t \\
        &-\gamma^2 e^{\int_{t}^{T} r(s)ds} \c^\T_t \big( \Sigma \Lambda_t\big) + \frac{\gamma^2}{2} \left| \Lambda_t \right|^2 \big) dt + J_{t_-}^{\c} \big(- \gamma e^{\int_{t}^{T} r(s)ds} \c_t^\T dB_t +  \gamma \Lambda_t^\top dW_t +  \int_{E}\big(e^{\gamma U_t(e)}-1\big)N(dt,de)\big) \\
        &= J_{t_-}^{\c} \Big( D_t(\alpha_t) dt - \gamma  e^{\int_{t}^{T} r(s)ds} \c_t^\T dB_t + \gamma \Lambda_t^\top dW_t +  \int_{E}\big(e^{\gamma U_t(e)}-1\big)\widetilde{N}(dt,de) \Big).
	\end{align*}
	where the drift factor takes the form: 
	\begin{align*}
		D_t(\alpha) = & \frac{\gamma^2}{2} e^{\int_{t}^{T} 2r(s)ds}  \c^\T \c -\gamma  e^{\int_{t}^{T} r(s)ds}  \c^\T h_t  + \frac{1}{2} h_t^\T h_t.
	\end{align*}
	Differentiating \(D_t(\alpha) \) with respect to \(\alpha\) and checking the second order condition, the maximizer \(\alpha^*_t:=\frac1\gamma  e^{-\int_{t}^{T} r(s)ds} h_t =\frac1\gamma  e^{-\int_{t}^{T} r(s)ds} \left( \lambda_t +  \gamma \Sigma \Lambda_t \right)\) for every \(t\in[0,T]\) that is the strategy given by Equation~\eqref{Eq:alpha_GeneralExpo*}.
	Evaluating the drift factor \(D_t \) at \(\alpha^*_t\) show that \(D_t(\alpha^*_t)\) vanishes to \(0\). 
	Note $D_t(\alpha)$ is a quadratic function on $\alpha$ and $\gamma - 1 < 0$. As $D_t(\alpha^*_t) =0$, then $D_t(\alpha_t) \geq 0$ for any admissible strategy \(\alpha\).
	Moreover, solving the stochastic differential equation for $J_t^{\alpha}$ yields
	\begin{equation}\label{eq:JExpo}
	    J_t^{\alpha}= -\frac{\exp(Y_0)}{\gamma}\exp\left(-\gamma  e^{\int_{0}^{T} r(s)ds}x_0\right)e^{\int^t_0 D_s(\alpha_s) ds} F_t^\alpha
	\end{equation}
		where
	\begin{align*}
       F_t^\alpha&= \mathcal E\Big(\int_0^t -\gamma  e^{\int_{s}^{T} r(u)du} \c_s^\T dB_s + \gamma \Lambda_s^\top dW_s +  \int_0^t\int_{E}\big(e^{\gamma U_s(e)}-1\big)\widetilde{N}(ds,de) \Big) 
	\end{align*}
	Following our assumptions on the admissible strategies~\ref{eq:AdmStrat2} and Proposition~\ref{prop:ExpoExp_riccati}, we have that
	$\left(\alpha, \Lambda, U\right) \in {L^{2,loc}_{\F}([0,T], \R^d)^2 \times L^2_\F([0,T],\tilde{N})}$ and thus, the stochastic exponential $F_t^\alpha$ is a $(\mathbb{F},\mathbb{P})$-local martingale (which follows from the basic properties of the Dool\'{e}ans-Dade exponential). Therefore, there exists a sequence of stopping times $\{\tau_n\}_{n\geq1}$ satisfying $\lim_{n \rightarrow \infty} \tau_n = T$, $\p$-$\as$, such that that $F^\alpha_{t \wedge \tau_n}$ is a positive martingale for every $n$.
	
	\noindent Furthermore, $- \frac{\exp(Y_0)}{\gamma} \exp\big[ - \gamma e^{ \int^T_0 r(u)du} x_0 \big] e^{\int^t_0 D_s(\alpha_s) ds}$ is non-increasing. Therefore, $J^\alpha_{t\wedge \tau_n}$ is a supermartingale. Then for $s \leq t$, $\E[ J^\alpha_{t\wedge \tau_n} | \cF_s] \leq J^\alpha_{s\wedge \tau_n}$. It implies that for any set $ A \in \cF_s$, 
	\begin{equation*}
		\E[ J^\alpha_{t\wedge \tau_n} \id_A] \leq \E[J^\alpha_{s\wedge \tau_n}\id_A], \quad s \leq t.
	\end{equation*}
	for every $n$.  Since $\left(\exp\big[ - \gamma e^{ \int^T_{t\wedge \tau_n} r(u) du} X_{t \wedge \tau_n} \big]\right)_n$ is uniformly integrable (~\ref{eq:AdmStrat2}) and $\exp(\gamma Y)$ is bounded (see claim~$2.$ of the Remark following Theorem~\ref{Thm:ExpoUtilityGeneral}), $\left(J^\alpha_{t\wedge \tau_n} \right)_n$ and $\left(J^\alpha_{s\wedge \tau_n} \right)_n$ are uniformly integrable. Let $n \rightarrow \infty$, then $\E[ J^\alpha_t \id_A] \leq \E[J^\alpha_s \id_A]$. Then we deduce that $J^\alpha$ is a supermartingale  for every arbitrary admissible strategy $\alpha$.
	It remains to show that $J_t^{\alpha^*}$ is a true martingale for the optimal strategy $\alpha^*$ in which case $e^{\int_0^t D_s(\alpha_s^*)ds}=1$ and therefore the above equation~\eqref{eq:JExpo}, when  inserting the candidate~\eqref{Eq:alpha_GeneralExpo*} for the optimal strategy \(\alpha^*\) reads $J_t^{\alpha^*}=-\frac{\exp(Y_0)}{\gamma}\exp\left(-\gamma  e^{\int_{0}^{T} r(s)ds}x_0\right) F_t^{\alpha^*}$. For $\alpha_t = \alpha^*_t$, $F_t^{\alpha^*}$ is a martingale by Lemma~\ref{lm: extended_m_EG_lemma}. 
	Subsequently, $J^{\alpha^*}_t$ is a true martingale and taking the expectation, its boils down that:
	\begin{align*}
	\E_{x_0,V_0}{\big[-\frac{1}{\gamma}\exp\big(-\gamma (X^{\alpha^*}_T-Y_T)\big) \big]}&:=\E_{x_0,V_0}{\big[J_T^{\alpha^*}\big]}=-\frac{\exp(Y_0)}{\gamma}\exp\Big(-\gamma  e^{\int_{0}^{T} r(s)ds}x_0\Big) \E_{x_0,V_0}{\big[F_T^{\alpha^*} \big]}\\
    &= -\frac{\exp(Y_0)}{\gamma}\exp\Big(-\gamma  e^{\int_{0}^{T} r(s)ds}x_0\Big).
    \end{align*}
	where the last inequality comes from the fact that \(F_T^{\alpha^*}\) is a true martingale. This completes the  proof of the equality~\eqref{eq:valExpo}.
    
	\smallskip
	\noindent 
	  We have verified all conditions required by martingale optimality principle in Definition~\ref{def:Martopt}, except for the admissibility of $\alpha^*$. It becomes straightforward by Step~2 below, that $\alpha^*$ in equation~\eqref{Eq:alpha_GeneralExpo*} is admissible.

    \smallskip
	\noindent {\sc Step~2} (\textit{Proof of Admissibility:})
	The admissibility of the candidate in equation~\eqref{Eq:alpha_GeneralExpo*} for the optimal portfolio strategy $\alpha^*$, and, on the way, the bound in equation~\eqref{eq:boundExpo} are  established in \cite[Proof of Theorem~3.12]{Gnabeyeu2026b}. 
     The proof is complete. \hfill $\Box$

    \subsection{Proof of Proposition~\ref{prop:ExpoPower_riccati} and Theorem~\ref{Thm:powerUtilityGeneral}}\label{sect:proofMresult1}
\noindent {\bf Proof of Proposition~\ref{prop:ExpoPower_riccati}: } We set $Y_t =  \gamma\int_t^T r(s) ds + \int_t^T\int_{E}h\big(\psi(T-s)^\top\eta(e)\big)\nu_0(de)ds \sum_{i=1}^d\int_t^T  \big(\frac{\gamma \theta_i^2}{2(1-\gamma)}  +  F_i(s,\psi(T-s))\big) g^i_t(s) ds, \quad t \leq T.$
	The dynamic of \( Y \) can readily be obtained by recalling \( g_t(s) \) from ~\eqref{eq:processg} and by observing that for fixed \( s \), the dynamics of \( t \to g_t(s) \) are given by \(	dg_t(s) = K(s-t) \, dZ_t \quad t \leq s.\)
	Since \( g_t(t) = V_t \), it follows by stochastic Fubini's theorem, see \cite[Theorem 2.2]{Veraar2012}, that the dynamics of $Y$ reads as  
	\begin{align*}
		&\,dY_t 
        =\Big(-\gamma r(t)  - \int_{E}h\big(\psi(T-t)^\top\eta(e)\big)\nu_0(de) \Big)dt - \sum_{i=1}^d \Big(\big(\frac{\gamma \theta_i^2}{2(1-\gamma)}  + F_i(s,\psi(T-s))\big) V^i_t dt\\
        &\hspace{1cm}+\; \int_t^T  {K_i}(s-t) \big(\frac{\gamma \theta_i^2}{2(1-\gamma)}  + F_i(s,\psi(T-s))\big) ds dZ^i_t\Big) = -\Big( \gamma r(t) + \sum_{i=1}^d \Big(\frac{\gamma \theta_i^2}{2(1-\gamma)}  \\
        &\hspace{.3cm}+  \frac{\gamma}{1-\gamma} \theta_i \rho_i \sigma^v_i \varsigma^i(t) \psi^i(T-t) + \frac {(\sigma^v_i)^2} 2 \big(  1 +\frac{\gamma}{1-\gamma}\rho_i^2\big) (\varsigma^i(t)\psi^i(T-t))^2 + \int_{E}h\big(\psi(T-t)^\top\eta(e)\big)\nu_i(de)\Big) V^i_t \\
		&\hspace{.5cm}+ \int_{E}h\big(\psi(T-t)^\top\eta(e)\big)\nu_0(de)  \Big)dt + \;  \sum_{i=1}^d \psi^i(T-t)\sigma^v_i \varsigma^i(t) \sqrt{V^i_t}dW^i_t + \sum_{i=1}^d \psi^i(T-t) \int_{E}\eta^i(e)\tilde{N}(dt,de)\\ 
        &\hspace{.5cm}= \Big(-\gamma r(t)-  \sum_{i=1}^d \Big(\frac{\gamma}{2(1-\gamma)}\left( \theta_i^2 + \rho_i \sigma^v_i \varsigma^i(t) \psi^i(T-t )\right)^2 + \frac {(\sigma^v_i)^2} 2 (\varsigma^i(t)\psi^i(T-t))^2   \Big) V^i_t\\
        &\hspace{.3cm}-\;\int_{E}h\big(\psi(T-t)^\top\eta(e)\big)\xi(V_t, de) \Big)dt + \sum_{i=1}^d \psi^i(T-t)\sigma^v_i \varsigma^i(t) \sqrt{V^i_t}dW^i_t +   \sum_{i=1}^d \psi^i(T-t) \int_{E}\eta^i(e)\tilde{N}(dt,de) 
	\end{align*}
	where we changed variables and used the inhomogeneous Riccati--Volterra equation \eqref{eq:RiccatiPower2} for $\psi$ for the second equality.  Finally, observing that 
	\begin{equation*}
		\left| \lambda_t + \Sigma \Lambda_t \right|^2  = \;  \sum_{i=1}^d \left( \theta_i^2 + \rho_i \sigma^v_i \varsigma^i(t) \psi^i(T-t )\right)^2 V^i_t \quad \text{and} \quad \left| \Lambda_t\right|^2  = \; \sum_{i=1}^d (\sigma^v_i)^2 (\varsigma^i(t)\psi^i(T-t))^2  V^i_t
	\end{equation*}
    together with \(Y_T=0\), we get that $(Y, \Lambda, U)$ as defined in ~\eqref{eq:SolbsdePower} solves the BSDEJ~\eqref{eq:bsdePower}.
{It remains to show that $\left(Y, \Lambda, U\right) \in \mathbb{S}^{\infty}_{\F}([0,T],\R) \times L^2_{\F}([0,T], \R^d) \times L^2_{\F}([0, T],\tilde{N})$. 

\noindent One check with It\^o-L\'evy's Lemma together with martingale representation that (see also~\cite[proof of Proposition 3.6]{Gnabeyeu2026b}) for \( \; 0\leq t\leq T\):
		\begin{equation}\label{eq:Power_ito_gamma}
			\exp\big(Y_t\big)=:\E^{\mathbb{P}} \Big[ \exp\Big(\int_t^T  \big(\gamma r(s) + \frac{\gamma}{2(1-\gamma)} \left| \lambda_s+ \Sigma \Lambda_s \right|^2\big) ds\Big) \mid \mathcal F_t\Big].
		\end{equation}
	\noindent which ensures that $\exp\big(Y_t\big)>0$ $\P-a.s.$, since $V_t$ is non-negative ($V\in \mathbb R^{d}_+$), $r(t) > 0$ is deterministic, and $ 1-\gamma \leq 1$. We then have in view of~\eqref{eq:Power_ito_gamma} that there exists some positive constant \(1>m>0\) such that $\exp\big(Y_t\big) \geq m > 0$ for every \(t\in [0,T]\). Moreover, we have the representation, \(Y_t:=\E \Big[ Y_T - \int_t^T  f\big(Y_s, \Lambda_s, U_s\big) ds \mid \mathcal F_t\Big]\) where \(Y_T=0\) and \(f\) in~\eqref{eq:GammaDefPower} is defined in~\eqref{eq:bsdePower}, as a non negative-function. It boilds down that \(\ln(m)\leq Y_t \leq 0\) for all $ t \in [0, T]$, $\p$-$\as$ and \(Y\) is  essentially bounded. 
    
\noindent As for $\Lambda$, it is clear that it belongs to $L^2_{\F}([0,T], \R^d)$ since $\varsigma$ and $\psi$ are bounded, $\psi$ is continuous thus bounded and $\E \Big[\int_0^T  \sum_{i=1}^d V^i_s ds \Big] <  \infty$ by \eqref{eq:moments V1}. \\
Moreover $U \in L^2_{\F}([0, T],\tilde{N})$ follows from the boundedness property of $\psi$ together with the integrability condition $(ii)$ in \eqref{assump:kernelJumpVolterra}.
This complete the Proof\hfill $\Box$

    \medskip
     \noindent {\bf Proof of Theorem~\ref{Thm:powerUtilityGeneral}:}
    \smallskip
	\noindent {\sc Step~1} (\textit{Proof of the Martingale optimality principle for \(J^\alpha\).})
    We show that $J_t^{\alpha}$ fulfills the martingale optimality principle in Definition~\ref{def:Martopt}. 
	For the first condition, note that $Y_T=0$ and hence $J_T^{\alpha}=\frac{1}{\gamma}(X_T^{\alpha})^{\gamma}$. Since $\exp(Y_0)$ is a constant independent of $ \alpha \in \cA$, $J^\alpha_0 = \frac{x^\gamma_0}{\gamma} \exp(Y_0)$ is a constant independent of $ \alpha \in \cA$ and consequently, the second condition is also satisfied. In order to show that the third condition is also fulfilled, we apply Itô's formula on $J_t^{\alpha}$ defined in~\eqref{eq:ansatzPower}. 
    
    	\smallskip
	\noindent 
	Let's us consider the Problem~\eqref{obj_power} with an arbitrary strategy $\alpha \in \mathcal{A}$ from Equation~\eqref{eq:AdmStrat}.
	To ease notations, we set \(L_t :=
	\int_0^t\gamma \big(r(s) + \alpha_s^\top \lambda_s - \tfrac12 \left|\alpha_s\right|^2\big)\,ds
	+
	\int_0^t \gamma\alpha_s^\top \, dB_{s} + Y_t\) and $h_t = \lambda_t + \Sigma \Lambda_t $. 
     Note that \(J^\alpha_t := \frac{x_0^\gamma}{\gamma}\exp(L_t)\) 
    ,we write by It\^o's lemma:
	\begin{align*}
		dL_t &= \gamma \big(r(t) + \alpha_t^\top \lambda_t - \tfrac12 \left|\alpha_t\right|^2\big) dt + \gamma \alpha_t^\T dB_t  +  dY_t = \Lambda_t^\top dW_t + \gamma \alpha_t^\T dB_t \\
		& +  \int_{E}U_t(e)N(dt,de) + \big( \gamma \big(r(t) + \alpha_t^\top \lambda_t - \tfrac12 \left|\alpha_t\right|^2\big)- f(Y_t,\Lambda_t, U_t)-\int_{E}U_t(e)\xi(V_t, de) \big) dt
	\end{align*}
	where \(f\) is the driver in Equation~\eqref{eq:GammaDefPower}. Consequently, for any admissible strategy $\alpha \in \mathcal A$, using the Riccati BSDEJ~\eqref{eq:bsdePower} in Proposition~\ref{prop:ExpoPower_riccati} together with It\^o-L\'evy's rule applied to~\eqref{eq:ansatzPower} yield: 
	\begin{align*}
		dJ_t^{\c} &= \; J_{t_-}^{\c} \big( \gamma r(t) + \gamma \alpha_t^\T \lambda_t - \tfrac\gamma2 \left|\alpha_t\right|^2- f(Y_t,\Lambda_t, U_t)- \int_{E}U_t(e)\xi(V_t, de) + \frac{\gamma^2}{2} \c^\T_t \c_t \\
        &+\gamma \c^\T_t \big( \Sigma \Lambda_t\big) + \frac{1}{2} \left| \Lambda_t \right|^2 \big) dt + J_{t_-}^{\c} \big(\gamma \c_t^\T dB_t +  \Lambda_t^\top dW_t +  \int_{E}\big(e^{ U_t(e)}-1\big)N(dt,de)\big) \\
        &= J_{t_-}^{\c} \Big( D_t(\alpha_t) dt + \gamma \c_t^\T dB_t + \Lambda_t^\top dW_t +  \int_{E}\big(e^{ U_t(e)}-1\big)\widetilde{N}(dt,de) \Big).
	\end{align*}
	where the drift factor takes the form: 
	\begin{align*}
		D_t(\alpha) = & \frac{\gamma(\gamma-1)}{2}  \alpha^\T \alpha +\gamma  \alpha^\T h_t - \frac{\gamma}{2(1-\gamma)} h_t^\T h_t.
	\end{align*}
    Now, we perform a pointwise maximization of the drift.
    \noindent Differentiating \(D_t(\alpha) \) with respect to \(\alpha\) and checking the second order condition, one obtains the maximizer \(\alpha^*_t=\frac{1}{1 - \gamma}h_t=\frac{1}{1 - \gamma} \left( \lambda_t +  \Sigma \Lambda_t \right)\) for every \(t\in[0,T]\) that is the strategy given by Equation~\eqref{Eq:alpha_Generalpower*}. Evaluating the drift factor \(D_t \) at \(\alpha^*_t\) show that \(D_t(\alpha^*_t)\) vanishes to \(0\). Note $D_t(\alpha)$ is a quadratic function on $\alpha$ and $\gamma - 1 < 0$. As $D_t(\alpha^*_t) =0$, then $D_t(\alpha_t) \leq 0$ for any admissible strategy \(\alpha\).
    Now, solving the stochastic differential equation for \(J^\alpha\) yields, {using $Y_T=0$},
	\begin{equation}\label{eq:PowerUlti}
		J^\alpha_t=\frac{x_0^{\gamma}}{\gamma}\exp(Y_0 )e^{\int^t_0 D_s(\alpha_s) ds} F_t^\alpha.
	\end{equation}
	where \(F_t^\alpha\) is given by
	\begin{equation*}
		F_t^\alpha= \mathcal E\Big(\int_0^t \gamma \c_s^\T dB_s + \Lambda_s^\top dW_s + \int_0^t\int_{E}\big(e^{U_s(e)}-1\big)\widetilde{N}(ds,de) \Big).
	\end{equation*}
    Now, since $D_s(\alpha_s)\leq 0$ for any \(\alpha\in\mathcal{A}\), we have $e^{\int^t_0 D_s(\alpha_s) ds}$ is a non-increasing function.
	 By our assumptions on the admissible strategies~\ref{eq:AdmStrat} and Proposition~\ref{prop:ExpoPower_riccati},
	$\left(\alpha, \Lambda, U\right) \in {L^{2,loc}_{\F}([0,T], \R^d)^2 \times L^2_\F([0,T],\tilde{N})}$ and thus the stochastic exponential $F_t^\alpha$ is a $(\mathbb{F},\mathbb{P})$-local martingale (which follows from the basic properties of the Dool\'{e}ans-Dade exponential). Therefore, there exists a sequence of stopping times $\{\tau_n\}_{n\geq1}$ satisfying $\lim_{n \rightarrow \infty} \tau_n = T$, $\p$-$\as$, such that 
	\begin{equation*}
		\E[J^\alpha_{t\wedge \tau_n} | \cF_s] \leq J^\alpha_{s\wedge\tau_n}, \quad s \leq t \leq T,
	\end{equation*}
	for every $n$.  Moreover, since $J^\alpha_t$ is bounded from below by \(0\) ( $J_t^{\alpha}\geq 0$), applying Fatou's Lemma for $n\rightarrow \infty$, we deduce that $J^\alpha_t$ is a supermartingale for every arbitrary admissible strategy $\alpha$.
	It remains to show that $J_t^{\alpha^*}$ is a true martingale for the optimal strategy $\alpha^*$ in which case $e^{\int_0^t D_s(\alpha_s^*)ds}=1$ and hence the above Equation~\eqref{eq:PowerUlti}, when  inserting the candidate~\eqref{Eq:alpha_Generalpower*} for the optimal strategy \(\alpha^*\)  reads $J_t^{\alpha^*}=\frac{ x_0^{\gamma}}{\gamma} \exp(Y_0 ) F_t^{\alpha^*}$. $F_t^{\alpha^*}$ is a \(\P\)-martingale with expectation \(1 \) by Lemma~\ref{lm: extended_m_EG_lemma}. Consequently, taking the expectation, it is straightforward that:
	\[\E_{x_0,V_0}{\big[\frac{(X^{\alpha^*}_T)^\gamma}{\gamma}  \big]}:=\E_{x_0,V_0}{\big[J_T^{\alpha^*}\big]}=\frac{ x_0^{\gamma}}{\gamma} \exp(Y_0 ) \E_{x_0,V_0}{\big[F_T^{\alpha^*} \big]}=\frac{ x_0^{\gamma}}{\gamma} \exp(Y_0 ).\]
	where the last inequality comes from the martingality of \(F_T^{\alpha^*}\) and the desired result about the first part of the proof (\textit{equality~\eqref{eq:valPower}}) is completed. 
	It remains to show the \textit{admissibility} of the optimal portfolio strategies \(\alpha^*\).
    
    \smallskip
	\noindent {\sc Step~2} (\textit{Proof of Admissibility:})
	The admissibility of the candidate in equation~\eqref{Eq:alpha_Generalpower*} for the optimal portfolio strategy $\alpha^*$ as well as the bound in equation~\eqref{eq:boundPower}, are obtained by exactly the same computations as in \cite[Proof of Theorem~3.7]{Gnabeyeu2026b}.
    
   \noindent  We have verified all conditions required by martingale optimality principle. \hfill $\Box$

\subsection{Proof of Theorem~\ref{Thm:logUtilityGeneral} } 
\noindent Note that $J_t^{\alpha}$ satisfies conditions~$1$ and~$2$ of Definition~\ref{def:Martopt} since $Y_T=0$ and $Y_0$ is a constant independent of $ \alpha \in \cA$. In order to show that the third condition is fulfilled, we apply Itô's formula on $J_t^{\alpha}$ which yields
	\(dJ_t^{\alpha}=\big( \alpha_t^\top \lambda_t - \tfrac12 \left|\alpha_t\right|^2- \tfrac12 \left|\lambda_t\right|^2\big)\,dt + \alpha_t^\top \, dB_{t} + \Lambda_t^\top dW_t + \int_{E} U_t(e)\,\tilde{N}(dt,de)\).

\noindent Following our assumptions on the admissible strategies~\ref{eq:AdmStrat} and the spaces to which the solution $(Y, \Lambda, U)$ to the BSDEJ belong, we have that
	$\left(\alpha, \Lambda, U\right) \in {(L^{2}_{\F}([0,T], \R^d))^2 \times L^2_\F([0,T],\tilde{N})}$ and thus, the stochastic integrals in $J^\alpha$ are $(\mathbb{F},\mathbb{P})$ martingales for all $\alpha \in \mathcal{A}$.
    
\noindent  Therefore, $J^\alpha$ is a supermartingale for all $\alpha \in \mathcal{A}$. 
In particular, $J^{\alpha^*}$ is a martingale (the pointwise maximization  of the drift yields \(\c^*_t:=\lambda_t\) for every \(t\in [0,T]\) ). 
The admissibility of $\alpha^*$ follows from Equation~\eqref{eq:assumption_novikov} in the remark~\eqref{rm:ass_} on Assumption~\ref{assm:gen} 
and we conclude that $\alpha^*$ is an optimal strategy for \eqref{obj_log}.

\noindent  The corresponding value function is determined simply by the initial value of the supermartingale \(J_t^\alpha\) at \(\c^*\). Subtituting \(\c^*\) into~\eqref{obj_log} yields:
	\begin{align*}
		\sup_{\alpha(\cdot) \in \mathcal A}\E\Big[\log(X_T^{\alpha})\Big]
		=
		\log(x_0) +
		\E\Big[\int_0^T \big(r(s) + \tfrac12 \left|\lambda_s\right|^2\big)\,ds\Big]= \log(x_0) +
		\int_0^T r(s) \,ds + \tfrac12 \int_0^T(\theta\odot\theta)^\top \E\big[V_s\big]\,ds
	\end{align*}
where $\odot$ denote the Hadamard (pointwise or component-wise) product. Taking the expectation in Equation~\ref{VolSqrt2} reads, owing to regular and stochastic Fubini's theorems 
\begin{equation}\label{eq:ExpectationV} 
		\E\big[V_t\big] = g_0(t) + \int_0^t K(t-s)  D \E\big[V_s\big] ds.
\end{equation}
\noindent Note that, the linear growth in Equation~\eqref{eq:LinearGrowth} together with the moment control in Equation~\eqref{eq:moments V1} enable the unrestricted use of both regular and stochastic Fubini's theorems.
Sufficient conditions for interchanging the order of ordinary integration (with respect to a finite measure) and stochastic integration (with respect to a square integrable martingale) are provided in  \cite[Thm.1]{Kailath_Segall},  and further details can be found in \cite[Thm. IV.65]{Protter2005}, \cite[ Theorem 2.6]{Walsh1986}, \cite[ Theorem 2.6]{Veraar2012}.

\noindent Finally, the claimed value function in~\eqref{eq:valueLog} follows from Lemma~\ref{lem:volterra_resolvent} by noticing that, the unique solution of the linear deterministic Volterra equation~\eqref{eq:ExpectationV} reads:
\begin{equation}\label{eq:ExpectationVSol} 
		\E\big[V_t\big] = g_0(t) - \int_0^t R_D(t-s)  g_0(s) ds.
\end{equation}
where $R_D$ is the resolvent
of the second kind of the matrix-valued kernel $-KD$. (See also, the Wiener-Hopf equation  in~\cite[Proposition 2.4]{EGnabeyeu2025} in the one-dimensional setting).
\noindent This completes the proof and we are done. \hfill $\square$

\medskip\noindent {\bf Provisional remark:} 
As a concluding remark, establishing
     existence and uniqueness results for the time-dependent, multivariate Riccati-Volterra integral equations with L\'evy exponential jump compensator in
\eqref{eq:RiccatiExpTilpsi1}--\eqref{eq:RiccatiExpTilpsi2} and \eqref{eq:RiccatiPower1}--\eqref{eq:RiccatiPower2}, which are used to characterise the optimal strategy,
     constitutes an interesting direction for future research and lies beyond the scope of the present paper. We refer to~\cite{BondiLivieriPulido2024} for results in the specific one-dimensional setting.
     
\medskip
\medskip
\medskip
	\noindent {\bf Acknowledgements:} 
	 The authors would like to thank L. Abbas-Turki, I. Kharroubi, G.  Pag\`es and M. Rosenbaum 
    for their invaluable insights and for many fruitful and inspiring discussions.
\bibliographystyle{alpha}
\bibliography{references}




\appendix
\section{Measure-extended conditional Laplace functional for Affine Volterra Processes with jumps}
	\label{sect-CharacteristicFunction}
	\noindent In this section, we establish the representation result for the conditional Laplace functional of the time-inhomogeneous affine Volterra equation~\eqref{VolSqrt_}-- ~\eqref{VolSqrt2} and prove that it is exponential-affine in the past path. This is a measure-extended generalisation of the result in~\cite{BondiLivieriPulido2024}.
	More generally, we consider the time-inhomogeneous affine Volterra equation~\eqref{VolSqrt_}-- ~\eqref{VolSqrt2} where we assume more generally that the matrix $D$ in the drift is not necessarily a diagonal matrix, but defined as a  matrix-valued function i.e. 
	$D : \mathbb{R}_+ \to \mathcal{M}_d(\mathbb{R})$.
	We assume moreover that such resulting equation~\eqref{VolSqrt_}-- ~\eqref{VolSqrt2}
	has (at least) one \textit{non-negative} weak solution \(V = (V_t)_{t \geq 0}\) defined on some stochastic basis \((\Omega, \mathcal{F}, (\mathcal{F}_t)_{t \geq 0}, \mathbb{P})\).\\
	\noindent To state the main formula in a synthetic form, let us define and then consider for a measure $m \in \mathcal{M}$, the following measure-extended Riccati--Volterra equation:
	\begin{equation}\label{eq:measureFLplce}
		\begin{aligned}
			\forall \, m \in \mathcal{M},\quad \psi(t) &= \int_{[0,t)} K(t-s)\,m(-\dd s) + \int_0^t K(t-s)\,F(T-s,\psi(s))\,\dd s, \quad 0\leq t \leq T\\
			F_i(s, \psi) &= (D^\top(s) \psi)_i + \frac{(\sigma^v_i)^2}{2} (\varsigma^i(s)\psi^i)^2 + \int_{E} h(\psi^\top \eta(e))\nu_i(de) , i=1,\ldots,d; \quad (s,\psi)\in \R_+\times \R.
		\end{aligned}
	\end{equation}
	\newpage
	where $D : \mathbb{R}_+ \to \mathcal{M}_d(\mathbb{R})$, and \( \varsigma : \mathbb{R}_+ \to \mathbb{R} \) are  given continuous function.

	\medskip
	\noindent {\bf Remark:}
	Equation~\eqref{eq:measureFLplce} is written in a forward form. An equivalent expression in backward form is:
	\begin{equation}\label{eq:measureFLplce_}
		\psi(T - t) = \int_t^T K(s-t) \, m(\dd s) + \int_t^T K(s-t) F(s, \psi(T - s))  \, \dd s.
	\end{equation}
	This 
	formulation~\eqref{eq:measureFLplce_} is essential in problems where the system's behavior is determined by a known final state, allowing for the determination of the system's evolution by integrating backwards in time.

	\medskip
	\noindent 
	First note that, for any \(T>0\), from the definition of the convolution of a measure \(m \in \mathcal{M}\) and a function \(f : (0, T] \to \mathbb{R}^d\) in equation~\eqref{eq:convolmeasure}, it is straightforward to check that for each \(p \in [1, \infty]\),
	\(\|f * m\|_{L^p([0, T])} \leq \|f\|_{L^p([0, T])} \, |m|([0, T]).\)

	\medskip
	\noindent In the following theorem, we assume the existence and uniqueness of \( \mathbb{R}^d-\) solution $\psi = \psi(\cdot,m) \in C([0,T], \mathbb{R}^d)$ to the Riccati--Volterra equation~\eqref{eq:measureFLplce}, and we aim to  establish an expression for the conditional Laplace transform of~\eqref{VolSqrt_}--~\eqref{VolSqrt2}
	in terms of the Riccati--Volterra equation~\eqref{eq:measureFLplce}.
	
	\begin{Theorem}\label{Thm:VolSqrtAll} 
		Fix \(T>0\) and suppose that Assumption~\ref{assump:kernelJumpVolterra} holds.
		Consider a measure $m \in \mathcal{M}$
        and assume there exists a solution $\psi = 	\psi(\cdot,m) \in C([0,T], \mathbb{R}^d)$ to the measure-extended Riccati--Volterra equation ~\eqref{eq:measureFLplce}.
		  Then, the following exponential-affine transform formula holds for the measure-extended Laplace transform of \( V \) in ~\eqref{VolSqrt2} for every \(t\in[0,T]\):
		\begin{align}\label{eq:laplace}
			\mathbb{E}\Big[ \exp\Big( \int_t^T V_{s}^\top \, \,m( \mathrm{d}s) \Big) \Big| \mathcal{F}_t \Big]
			&= \exp\Big( \int_t^T g_t(s)^\top \, m( \mathrm{d}s) \nonumber\\
            &\hspace{.2cm}+\;
			\int_t^T \Big( \int_{E}h\big(\psi(T-s)^\top\eta(e)\big)\nu_0(de)+F(s, \psi(T - s))^\top g_t(s)\Big)\, \mathrm{d}s \Big).
		\end{align}
		where the process  $(g_t(s))_{t\leq s}$ for each $s\leq T$, denotes the conditional \(\P-\)expected adjusted process defined in~\eqref{eq:Condprocessg} and given by:
		\begin{align}
			g_t(s) &= g_0(s) + \int_0^t K(s-u) dZ_u, 
			\qquad t \le s, \quad \text{where for } i=1,\ldots,d,\label{eq:AdjDiffForward}\\
			dZ_{t}^i &= (D(t)V_t)_{i}\, dt + \sigma^v_i\varsigma^i(t)\sqrt{V_t^i}\, dW^i_{t} + \int_{E}\eta^i (e)\tilde{N}(dt,de), \quad g_0(t):= \varphi(t) V_0 + \int_0^t K(t-s)\mu(s) ds.\nonumber
		\end{align}
	\end{Theorem}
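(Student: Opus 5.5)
The plan is the standard affine Volterra argument, in the spirit of~\cite{BondiLivieriPulido2024} and of the proof of Proposition~\ref{prop:ExpoExp_riccati}. Let $\psi=\psi(\cdot,m)$ be the given continuous solution of~\eqref{eq:measureFLplce}. Define the candidate process
\[
Y_t := \int_{[0,t)} V_s^\top\, m(\dd s) + \int_t^T g_t(s)^\top m(\dd s) + \int_t^T \Big(\int_E h\big(\psi(T-s)^\top\eta(e)\big)\nu_0(de) + F(s,\psi(T-s))^\top g_t(s)\Big)\dd s,
\]
and set $M_t:=\exp(Y_t)$. Since $g_T(\cdot)$ no longer contributes at $t=T$, we have $M_T=\exp\big(\int_0^T V_s^\top m(\dd s)\big)$. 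Hence~\eqref{eq:laplace} is equivalent to $M$ being a true martingale on $[0,T]$: dividing by the $\mathcal F_t$-measurable factor $\exp(\int_{[0,t)}V_s^\top m(\dd s))$ gives the claim.

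\textbf{Step 1 (dynamics of $Y$).} Use $dg_t(s)=K(s-t)\,dZ_t$ for $t<s$ and $g_t(t)=V_t$. By the stochastic Fubini theorem~\cite[Theorem 2.2]{Veraar2012}, justified by~\eqref{eq:moments V1} and~\eqref{eq:LinearGrowth}, we get
\[
dY_t = -\Big(\int_E h(\psi(T-t)^\top\eta(e))\nu_0(de)+F(t,\psi(T-t))^\top V_t\Big)dt + \Big(\int_t^T K(s-t)\, m(\dd s)+\int_t^T K(s-t)F(s,\psi(T-s))\dd s\Big)^{\!\top} dZ_t .
\]
The atoms of $m$ cancel between the first two terms of $Y$, which is why the half-open convention $[0,t)$ matters. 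By the backward form~\eqref{eq:measureFLplce_}, the integrand of $dZ_t$ is exactly $\psi(T-t)$. Substituting $dZ$ gives a drift
\[
-\int_E h(\psi^\top\eta)\,\xi(V_t,de) - \sum_i\tfrac{(\sigma^v_i)^2}{2}\big(\varsigma^i\psi^i\big)^2 V^i_t,
\]
the $D$-terms cancelling. The martingale part is $\sum_i\psi^i(T-t)\sigma^v_i\varsigma^i(t)\sqrt{V^i_t}\,dW^i_t+\int_E\psi(T-t)^\top\eta(e)\,\tilde N(dt,de)$. Note that $d\langle W^i\rangle_t=dt$, and the quadratic variation only involves diagonal terms because $\sigma^v$ and $\varsigma$ are diagonal.

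\textbf{Step 2 (local martingale).} Apply the It\^o--L\'evy formula to $e^{Y}$. The compensator correction $\int_E(e^{u}-1-u)\,\xi(V_t,de)=\int_E h(u)\,\xi(V_t,de)$ and the $\tfrac12$ quadratic variation term exactly cancel the drift, so
\[
dM_t = M_{t-}\Big(\Lambda_t^\top dW_t + \int_E\big(e^{\psi(T-t)^\top\eta(e)}-1\big)\tilde N(dt,de)\Big),
\]
with $\Lambda_t^i=\sigma_i^v\varsigma^i(t)\psi^i(T-t)\sqrt{V^i_t}$. Thus $M=M_0\,\mathcal E(\cdot)$ is a positive local martingale, hence a supermartingale.

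\textbf{Step 3 (true martingale), the main obstacle.} Without an integrability assumption one cannot apply Lemma~\ref{lm: extended_m_EG_lemma} directly, since Assumption~\ref{assm:gen} may fail for a general $m$. I would first try the localization argument of Abi Jaber--Larsson--Pulido: introduce $\tau_n=\inf\{t:|V_t|\ge n\}$. On $[0,\tau_n]$ the integrands are bounded, because $\psi$ is continuous and $\nu_i$ is finite, so the stopped exponential is a martingale. Define $\mathbb Q^n$ by $d\mathbb Q^n/d\P=\mathcal E_{T\wedge\tau_n}$. Under $\mathbb Q^n$, by Girsanov for jump processes, $V$ solves a Volterra equation of the same affine type with modified drift $D+\diag((\sigma^v)^2\varsigma^2\psi)$ and jump compensator $e^{\psi^\top\eta}\xi(V,de)$. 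These coefficients still satisfy linear growth, since $\int_E|\eta|^2e^{\psi^\top\eta}\,\nu_i(de)<\infty$ under~\eqref{eq:cond_U}-type integrability (otherwise assume it). The moment bound~\eqref{eq:moments V1} then holds under $\mathbb Q^n$ uniformly in $n$, so $\mathbb Q^n(\tau_n<T)\le \sup_t\E^{\mathbb Q^n}|V_t|/n$. One needs the supremum inside the expectation here, which I would get from the c\`adl\`ag maximal estimate of~\cite{AlfonsiSzulda2025}. This gives $\E[\mathcal E_T]\ge\lim_n \E[\mathcal E_{T}\mathbf 1_{\tau_n\ge T}]=\lim_n\mathbb Q^n(\tau_n\ge T)=1$. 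Together with the supermartingale property, this shows $\E[\mathcal E_T]=1$, so $M$ is a true martingale and~\eqref{eq:laplace} follows.

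Should this uniform moment estimate under $\mathbb Q^n$ prove delicate, the fallback is to restrict to the setting where Lemma~\ref{lm: extended_m_EG_lemma} applies, taking $g_1=0$ and $g_2=1$.
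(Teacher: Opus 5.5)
Your Steps 1 and 2 coincide with the paper's computation. The paper defines the same exponent $L_t$ and differentiates it using $dg_t(s)=K(s-t)\,dZ_t$, $g_t(t)=V_t$, stochastic Fubini and the backward form \eqref{eq:measureFLplce_}. It concludes, as you do, that $\exp(L)$ is $M_0$ times the stochastic exponential of $\Lambda^\top dW+\int_E(e^{\psi(T-t)^\top\eta(e)}-1)\,\tilde N(dt,de)$.

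\textbf{Step 3 differs.} The paper settles martingality in one line by applying Lemma~\ref{lm: extended_m_EG_lemma} with $g_1\equiv0$, $g_2\equiv1$, which is a Novikov/L\'epingle--M\'emin criterion. That lemma is stated under Assumption~\ref{assm:gen}, so the paper's proof silently imports, for the particular $\psi=\psi(\cdot,m)$, an exponential moment bound $\E[\exp(c\int_0^T\sum_iV^i_s\,ds)]<\infty$ and the integrability \eqref{eq:cond_U}. Neither hypothesis appears in the theorem, and the paper elsewhere derives the exponential moment from this very theorem (see the Remark following Assumption~\ref{assm:gen}).

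Your localization and change-of-measure argument, in the style of Abi Jaber--Larsson--Pulido, needs less. It uses only polynomial moment bounds for $V$ under the tilted measures $\mathbb{Q}^n$, uniform in $n$; these follow from linear growth of the modified affine coefficients, with constants independent of $n$. It also needs exponential integrability of $\psi^\top\eta$ against the $\nu_i$. So it proves the statement essentially under its own hypotheses and removes the circularity, at the cost of re-running the a priori Volterra estimates under $\mathbb{Q}^n$. The paper's route is shorter but conditional. Your fallback is exactly the paper's argument.

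\textbf{Points to tighten.}

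First, on $[0,\tau_n]$ the jump integrand $e^{\psi(T-t)^\top\eta(e)}-1$ is not bounded merely because $\psi$ is continuous and $\nu_i$ is finite; take $\eta$ unbounded with $\psi^\top\eta>0$. You need something like $\sup_{t\le T}\int_E(1+|\eta(e)|^2)\,e^{c|\psi(t)^\top\eta(e)|}\,\nu_i(de)<\infty$ for $i=0,\dots,d$ and some $c>1$. State it once as a standing hypothesis, since you use it both for the stopped exponential and for linear growth under $\mathbb{Q}^n$.

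Second, you can avoid the c\`adl\`ag maximal inequality altogether by localizing with $\tau_n:=\inf\{t\ge0:\int_0^t\sum_iV^i_s\,ds\ge n\}$. Then the continuous quadratic variation and the L\'epingle--M\'emin jump functional up to $\tau_n\wedge T$ are bounded by $C(1+n)$. Moreover $\mathbb{Q}^n(\tau_n<T)\le n^{-1}\int_0^T\E^{\mathbb{Q}^n}[\sum_iV^i_s]\,ds\to0$ uses only the pointwise bound $\sup_t\E^{\mathbb{Q}^n}|V_t|\le C$.

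Third, the absence of cross terms in $\langle Y^c\rangle$, and the affine form of the Girsanov drift, come from $d\langle W^i,W^j\rangle_t=0$ for $i\neq j$, not from the diagonality of $\sigma^v$ and $\varsigma$. This orthogonality is implicitly required by the theorem and by the paper's proof as well.
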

	\noindent {\bf Proof of Theorem~\ref{Thm:VolSqrtAll}.}	
	\smallskip
	\noindent  Let \( T > 0 \) and consider a measure $m \in \mathcal{M}$ such that
    there exists a unique solution $\psi = 	\psi(\cdot,m) \in C([0,T], \mathbb{R})$ to the measure-extended Riccati--Volterra equation ~\eqref{eq:measureFLplce}.
	Define 
	\[L_t = 	\int_0^{t} V_{s}^\top \,m(ds) + \int_t^T g_t(s)^\top\,m(ds) +  \int_t^T  \Big( \int_{E}h\big(\psi(T-s)^\top\eta(e)\big)\nu_0(de)+F(s, \psi(T - s))^\top g_t(s)\Big)\, \mathrm{d}s \]
	and set \( M = \exp(L) \) so that
    \begin{equation}\label{eq:Mdynamics}
    dM_t = M_{t_-} \Big( dL^c_t + \tfrac{1}{2}\, d\langle L^c \rangle_t +\int_E(\exp(\Delta L_t) - 1) N(dt, de) \Big).
   \end{equation}
   where $L^c$ is the continuous part of $L$ and $\Delta L$ his jump size.
	\noindent Let \( V \) be a solution of  equation~\eqref{VolSqrt_}--~\eqref{VolSqrt2} under Assumption~\ref{assump:kernelJumpVolterra}.   	 
	Then the process \((M_t )_{t\in[0,T]}\) is a
	local martingale on \( [0, T] \), and satisfies by It\^o-Levy's formula \[\frac{\mathrm{d}M_t}{M_{t_-}} = \sum_{i=1}^{d}
	\sigma^v_i \varsigma^i(t)\psi^i(T - t) \sqrt{V_t^i} \, dW_{t}^i + \int_{E}(e^{\psi(T - s)^\top\eta(e)}-1)\tilde{N}(dt,de).\]
    Now, the dynamics of \( L \) can be obtained by recalling \( g_t(s) \) from \eqref{eq:AdjDiffForward} and noting that for fixed \( s \), the dynamics of \( t \mapsto g_t(s) \) are given by 
	\[
	dg_t(s) = K(s-t) \, dZ_t, \;\quad  dZ_t = D(t) V_t dt + \sigma^v\varsigma(t) \sqrt{\diag(V_t)}dW_t + \int_{E}\eta (e)\tilde{N}(dt,de), \quad t \leq s.
	\]
	Since \( g_t(t) = V_t \), it follows by stochastic Fubini's theorem, see \cite[Theorem 2.2]{Veraar2012}, that the dynamics of $G$ reads
	\begin{align*}
		dL_t &=  V_{t}^\top \, m(\mathrm{d}s) -  g_t(t)^\top m(\mathrm{d}s) - F(t, \psi(T - t))^\top g_t(t) \, dt + \int_t^T dg_t(s)^\top \,\Big(m(ds) + F(s, \psi(T - s))\,ds\Big)\\
		&\hspace{1.5cm}-\;  \int_{E}h\big(\psi(T-t)^\top\eta(e)\big)\nu_0(de) dt\\ 
        &= - \Big(F(t, \psi(T - t))^\top V_{t} +  \int_{E}h\big(\psi(T-t)^\top\eta(e)\big)\nu_0(de)\Big)\, dt + \psi(T - t)^\top\,dZ_t
	\end{align*}
	where for the second equality we used the measure-extended Riccati--Volterra equation~\eqref{eq:measureFLplce_}. This implies that \(d\langle L^c \rangle_t =  \sum_{i=1}^{d}
	(\sigma^v_i\varsigma^i(t)\psi^i(T - t))^2 V^i_t\, dt.\)
	Injecting the dynamics of \( dL^c_t \) and \( d\langle L^c\rangle_t \) into that of \(M_t\) in Equation~\eqref{eq:Mdynamics}, we get that
	\begin{align*}
		&\frac{dM_t}{M_{t_-}}
		=
		\sum_{i=1}^{d}
		\big(- F_i(t,\psi(T-t)) + (D^{\top}(t)\psi(T-t))_i 
		+ \frac{(\sigma^v_i)^2}{2}(\varsigma^i(t)\psi^i(T - t))^2\,
		\big) V_{t}^i\, dt -\int_{E} \psi(T - t)^\top\eta(e)\xi(V_t, de) dt \\
		&-\; \int_{E}h\big(\psi(T-t)^\top\eta(e)\big)\nu_0(de) dt+
		\sum_{i=1}^{d}\sigma^v_i
		\varsigma^i(t)\psi^i(T - t)  \sqrt{V_t^i} \, dW_{t}^i + \int_{E}(e^{\psi(T - t)^\top\eta(e)}-1)N(dt,de)\\ 
        &=
		\sum_{i=1}^{d}
		\big(- F_i(t,\psi(T-t)) + (D^{\top}(t)\psi(T-t))_i 
		+ \frac{(\sigma^v_i)^2}{2}(\varsigma^i(t)\psi^i(T - t))^2\,
		+ \int_{E}h\big(\psi(T-t)^\top\eta(e)\big)\nu_i(de)\big) V_{t}^i\, dt  \\
        &+
		\sum_{i=1}^{d}\sigma^v_i
		\varsigma^i(t)\psi^i(T - t)  \sqrt{V_t^i} \, dW_{t}^i + \int_{E}(e^{\psi(T - t)^\top\eta(e)}-1)\tilde{N}(dt,de)= \Lambda_t^\top dW_t +   \int_{E}(e^{U_t(e)}-1)\tilde{N}(dt,de).
	\end{align*}
		where we changed variables in the first equality using
	\[  \sum_{j=1}^d \psi^j(T-t) (D(t) V_t)_j = \sum_{j=1}^d \psi^j(T-t) \sum_{i=1}^d D_{ji}(t) V_t^i = \sum_{i=1}^d V_t^i \sum_{j=1}^d D_{ji}(t) \psi^j(T-t) = \sum_{i=1}^d (D^\top(t) \psi (T-t))_iV_t^i \] 	
	and the drift vanishes in the last equality by definition of $F$ in the Riccati--Volterra equation~\eqref{eq:measureFLplce}. 
	This shows that $M$ is an exponential local martingale of the form
	\begin{equation}\label{eq:MartM}
		M_t = \mathcal{E}\!\left(
		\sum_{i=1}^{d}
		\int_0^t \sigma^v_i\varsigma^i(s)\psi^i(T - s)  \sqrt{V_s^i}\, dW_{s}^i+\int_0^t\int_{E}(e^{\psi(T - s)^\top\eta(e)}-1)\tilde{N}(ds,de)
		\right).
	\end{equation}
	\noindent To obtain \eqref{eq:laplace}, it suffices to prove that \( M \) is a martingale. Indeed, if this is the case then, the martingale property yields using that \(L_T = \int_0^T V_{s}^\top \,m( \mathrm{d}s)\)
		\begin{align*}
			&\ \mathbb{E}\left[ \exp\left( \int_0^T V_{s}^\top \,m( \mathrm{d}s) \right)  \Big| \mathcal{F}_t \right]
			= \mathbb{E}\left[ M_T \Big| \mathcal{F}_t \right] = M_t \\
			&\hspace{4.75cm}= \exp\left(\int_0^{t} V_{s}^\top \,m(ds) + \int_t^T g_t(s)^\top\,m(ds) +  \int_t^T F(s, \psi(T - s))^\top g_t(s)\, \mathrm{d}s \right).
		\end{align*}
	That is, if \( M \) is a true martingale, then the measure-extended Laplace transform of \( V_T \) is given by
		\begin{align}\label{eq:resFourierLaplace}
			\mathbb{E}\left[ \exp\left( \int_t^T  V_{s}^\top \,m( \mathrm{d}s) \right) \Big| \mathcal{F}_t \right]
			= \exp\left( \int_t^T g_t(s)^\top \,m(ds) +  \int_t^T F(s, \psi(T - s))^\top g_t(s)\, \mathrm{d}s \right).
		\end{align}
	\noindent which yields \eqref{eq:laplace}.
	We now argue martingality of \( M \). 
	Since $\psi$ is continuous, it is bounded; likewise, $\varsigma$ is bounded. Therefore the stochastic exponential~\eqref{eq:MartM} is a true $\P$- martingale thanks to Lemma~\ref{lm: extended_m_EG_lemma} with $g_{1} \equiv 0$, $g_{2} \equiv 1  \in L^{\infty}([0,T],\R)$ and $U_s (e):= \psi(T - s)^\top\eta(e)\; \forall e\in E$.
	We conclude that \( M \) is a martingale.
	\noindent This completes the proof and we are done. \hfill $\square$
\end{document}